\newcommand{\gap}{\vspace{0.1in}}
\newcommand{\epc}{\hspace{1pc}}
\newtheorem{assumption}{Assumption}
\title{Nonsmooth Composite  Matrix Optimization: Strong Regularity, Constraint Nondegeneracy and Beyond
}
\author{Ying Cui \and Chao Ding}
\institute{Y. Cui \at
              Department of Industrial and Systems Engineering, University of Southern California, Los Angeles,
		CA, U.S.A.  \\
\email{yingcui@usc.edu} \and
C. Ding \at
              Institute of Applied Mathematics,  Chinese Academy of Sciences, Beijing, P.R. China. \\
              \email{dingchao@amss.ac.cn}           
}
\date{}
\begin{document}
\maketitle

\begin{abstract}
The nonsmooth composite matrix optimization   problem (CMatOP), in particular, the matrix norm minimization problem,  is a generalization of the matrix conic programming problem with wide applications in numerical linear algebra, computational statistics and engineering. 
This paper is devoted to the characterization of the strong regularity  for the CMatOP via the generalized  strong second-order sufficient condition and constraint nondegeneracy for problems with nonsmooth objective functions. The derived result supplements the existing characterization of the strong regularity for the constrained optimization problems with twice continuously differentiable data.
\end{abstract}

\vskip 0.1in
\noindent
{\bf Keywords:}
	matrix optimization,  spectral functions, strong regularity,  piecewise affine, strong second-order sufficient condition, constraint nondegeneracy
\gap

\noindent
{\bf AMS Class:}
	 65K05, 90C25, 90C31


\section{Introduction}\label{sec: introduction}

Matrix conic programming 
is a class of optimization problems with  matrix cone constraints, in particular, the positive semidefinite constraint.
Being an extension of the classical nonlinear programming, this subject has now grown into a fruitful discipline in optimization,  including deep and rich mathematical theory, a bunch of efficient and robust solvers \cite{Sturm99,TohToddTutuncu99,zhao2010newton,YangSunToh15} and a wide range of important applications in combinatorial optimization \cite{Boyd94} and control theory \cite{Alizadeh95}. 

A natural generalization of the matrix conic programming problem is the matrix norm  minimization problem.
Starting from the nuclear norm formulation of the low rank matrix completion problem \cite{CandesTao09,RechtFazelParrilo10}, there is a growing list of algorithms and applications in such nonsmooth matrix optimization problems that also involve the spectral norm or the general matrix Ky Fan $k$-norm function  \cite{Watson1993,TohTrefethen98,BoydDiaconisParriloXiao09} in the objective. Denote $\mathbb{R}^n$ as the real $n$-dimensional space and $\mathbb{S}^n$ as the set of all $n\times n$ symmetric matrices.
A general form of the nonsmooth composite matrix optimization problems (CMatOPs) can be written as
\begin{equation}\label{opt}
\begin{array}{cl}
\displaystyle\operatornamewithlimits{minimize}_{{\bf x}\in \mathbb{X}} &\; \Phi({\bf x})\,\triangleq \, f({\bf x})  + \phi\circ \lambda(g({\bf x}))\\[0.1in] 
\mbox{subject to} &\; h({\bf x})=0,
\end{array}
\end{equation}
where $\mathbb{X}$ and $\mathbb{Y}$ are two given finite dimensional Euclidean spaces, $f:\mathbb{X}\to \mathbb{R}$ is a twice continuously differentiable function,  $g:\mathbb{X}\to \mathbb{S}^{n}$ and $h:\mathbb{X}\to \mathbb{Y}$ are twice continuously differentiable mappings,   
and $\phi:\mathbb{R}^n\to (-\infty, +\infty]$ is a  symmetric function (i.e., for any $u\in \mathbb{R}^n$, $\phi(Pu) = \phi(u)$ for any $n\times n$ permutation matrix $P$). Here $\lambda(\,\bullet\,)$ denotes the vector of eigenvalues for a symmetric matrix with the components being arranged in the non-increasing order. Obviously the function $\phi\circ \lambda$ only depends on the spectrum of a given matrix, and is thus called the spectral function in the literature. There is a one-to-one correspondence between the spectral function and the so-called orthogonal-invariant matrix function, i.e., the matrix function that is invariant under orthogonal similarity transformations \cite{Lewis1995}.
Notice that if the function $\phi$ is taken to be the indicator function over the nonnegative orthant, the problem \eqref{opt} reduces to the nonlinear semidefinite programming problem.

One fundamental concept in the sensitivity analysis and perturbation theory is the so-called {\it strong regularity}, which is originally introduced by Robinson \cite{Robinson80} for generalized equations;
see Section \ref{sec: main results} for its definition. The  Karush-Kuhn-Tucker (KKT) optimality condition of the conventional nonlinear programming problem with twice continuously differentiable data can be formulated as a special generalized equation, whose strong regularity at a KKT solution is known to be equivalent to the strong second-order sufficient condition and the constraint nondegeneracy (when restricted to the nonlinear programming problem, the constraint nondegeneracy reduces to the linear independence constraint qualification) \cite{Robinson80,dontchev1996}; see also \cite[Proposition 5.38]{BShapiro00}. This result has been further generalized to the nonlinear semidefinite programming problems in \cite{Sun06}.
For a general class of $C^2$-reducible problems where
the constraint nondegeneracy holds, the strong regularity is further proved to be  equivalent to the Lipschitzian full stability  \cite{MNR2014}.

The non-polyhedrality of the matrix cone 
distinguishes the nature of the matrix conic programming from the classical nonlinear programming, where the constraints of the latter problems are given by finitely representable equalities and inequalities. As one can expect, such a distinction is carried to the sensitivity analysis of the CMatOPs and makes it a worthwhile effort for a deep investigation. In contrast to the relative long history of the theoretical research of the matrix conic programming, the sensitivity analysis of the nonsmooth composite matrix optimization program still stays at an early stage. It turns out that one can rewrite \eqref{opt} via its epigraphical formulation 
\begin{equation}\label{eq:epi}
\begin{array}{cl}
\displaystyle\operatornamewithlimits{minimize}_{X\in\mathbb{X},\, t\in \mathbb{R}} & \;f(X)  + t \\[0.1in] 
\mbox{subject to}  &\; h(X) =0, \epc (g(X), t)\in {\rm epi}\, (\phi\circ \lambda),
\end{array}
\end{equation}
which transforms the original problem into a matrix conic programming problem. However, such a transformation itself is inadequate for drawing the whole picture of the sensitivity results of the CMatOPs, with the following two reasons. One,  besides that of the semidefinite programming, the characterization of the strong regularity for other matrix conic programming problems, such as the one involves the epigraph of the Ky Fan $k$-norm cone, is in fact unknown. Two, the reformulation in \eqref{eq:epi} lifts the original problem from $\mathbb{X}$ to $\mathbb{X}\times \mathbb{R}$. Even if given the answer raised by the first point, it still needs the effort to bring those characterization back to the space $\mathbb{X}$.

In this paper, we characterize the strong regularity of the solution to the KKT system of  \eqref{opt} for the case where $\phi$ is piecewise affine. To accomplish this task, we  study  nonsmooth counterparts of the second-order sufficient condition and the constraint nondegeneracy via the second-order variational analysis of the spectral functions.
The adopted approach is a departure from \cite{MNR2014} that based on the second-order subdifferential of the extended value function $\Phi(X) + \delta_{Q}(h(X))$, where $\delta_{Q}(h(\,\bullet\,))$ is the indicator function of $h(\,\bullet\,)$ over $Q$, i.e., $\delta_{Q}(h(X))$ equals to $0$ if $h(X)\in Q$ and $+\infty$ otherwise. With a main focus on the characterization of the full stability in the above mentioned reference, the resulting equivalent conditions involves the limiting coderivative of  ${\rm epi}\, (\phi\circ \lambda)$, whose calculation itself might be complicated.

The rest of the paper is organized as follows. Section \ref{sec: preliminaries} summarizes some useful variational properties of eigenvalues and piecewise affine functions. In Section \ref{sec: spectral functions}, we investigate the properties of  proximal mappings associate with spectral functions that are important to the subsequent analysis.
The main result of this paper on the characterization of the strong regularity   for the CMatOPs is presented in Section \ref{sec: main results}. An example of CMatOPs involving the largest eigenvalue of a symmetric matrix is used to illustrate the derived results in Section \ref{sec:example}. We conclude our paper in the final section.

Unless otherwise specified in the paper, we use plain small Latin letters (e.g., $x$ and $y$)  to represent scalars,  small Latin letters in boldface (e.g., ${\bf x}$) to represent vectors and capital Latin letters (e.g., $X$) to  represent matrices. We also use Greek letters (e.g., $\alpha$, $\beta$ and $\iota$)  to denote the index sets and  blackboard bold letters  (e.g., $\mathbb{R}^n$ and $\mathbb{O}^n$) to denote spaces or sets.  For  $X\in\mathbb{R}^{n\times n}$, ${\rm diag}(X)$ denotes the column vector consisting of  all the
diagonal entries of $X$ being arranged from
the first to the last. For  ${\bf x}\in\mathbb{R}^n$,
${\rm Diag}({\bf x})$  denotes  the {$n\times n$} diagonal matrix whose $i$-th diagonal entry is ${\bf x}_i$ for $i=1,\ldots,n$. We write $\mathbb{O}^n$ as the set of all  $n\times n$ orthogonal matrices, and ${\bf e}_n$ as the $n$-dimensional vector of all ones.

\section{Preliminaries and background results}
\label{sec: preliminaries}

\subsection{Variational analysis of the eigenvalues}
Let $X\in \mathbb{S}^n$ be an arbitrary symmetric matrix. Suppose that $X$ has the following eigenvalue decomposition
\begin{equation}\label{eq:eig-decomp}
X=U\,{\rm Diag}\left(\lambda_1(X), \cdots, \lambda_n(X)\right)\,U^{\top},
\end{equation}
where $\lambda_1(X)\ge \cdots\ge \lambda_n(X)$ are the eigenvalues of $X$ arranged in the non-increasing order and $U$ is a matrix of the corresponding orthonormal eigenvectors.
We denote the set of all matrices $U$ satisfying (\ref{eq:eig-decomp}) as
$\mathbb{O}^n(X)$. We also use $\lambda(X)$ to denote the vector whose $i$-th entry is $\lambda_i(X)$. Let $v_{1}(X)>v_{2}(X)>\ldots>v_{r}(X)$ be the distinct eigenvalues of $X$ arranged in the decreasing order. Define
\begin{equation}\label{eq:ak-symmetric}
\alpha^{l} :=\left\{i\in \{1,\ldots,n\}\,|\, \lambda_{i}(X)={v}_{l}(X)\right\},\quad l=1,\ldots,r.
\end{equation} For each $  i\in \{1,\ldots,n\}$, we denote $k_{i}(X)$ as the number of eigenvalues that  equal to $\lambda_i(X)$ but are ranked  before $i$ (including $i$) and $o_{i}(X)$ as the
number of eigenvalues that equal to $\lambda_i(X)$ but are ranked  after $i$ (excluding $i$). That is, the scalars $k_{i}(X)$ and  $o_{i}(X)$  satisfy
\begin{eqnarray}
&&\lambda_{1}(X)\geq\ldots\geq\lambda_{i-k_{i}(X)}(X)>\lambda_{i-k_{i}(X)+1}(X)=\ldots=\lambda_{i}(X)=\ldots=\lambda_{i+o_{i}(X)}(X)\nonumber\\ 
&&>\lambda_{i+o_{i}(X)+1}(X)\geq\ldots\geq\lambda_{n}(X).\label{eq:symmetric-l_i}
\end{eqnarray}
In the subsequent discussions, when the dependence of $k_{i}$ and $o_{i}$  on $X$ can be easily
seen from the context, we often drop $X$ for simplicity.

In the following, we summarize some results about the properties of the eigenvalues that are essential in our subsequent
discussions. The first result is Ky Fan's inequality \cite{Fan49}.
\begin{lemma}\label{lem:Fan}
	Let   $Y$ and $Z$ be  two matrices in $\mathbb{S}^{n}$. Then
\[
	\langle Y,Z\rangle\leq\lambda(Y)^{\top}\lambda(Z)\, ,
\]
	where the equality holds if and only if $Y$ and $Z$ admit a simultaneous ordered eigenvalue
	decomposition, i.e., there exists an orthogonal matrix $U\in\mathbb{O}^{n}$ such that
	\[
	Y=U\Lambda(Y)U^{\top}\quad {\rm and} \quad Z=U\Lambda(Z)U^{\top}.
	\]
\end{lemma}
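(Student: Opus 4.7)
My plan is to reduce the trace inner product to a bilinear form weighted by a doubly stochastic matrix, then invoke Birkhoff's theorem together with the classical rearrangement inequality; the equality case will then require a careful analysis because of potentially repeated eigenvalues.

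First, I would fix eigendecompositions $Y = U_Y\,\Lambda(Y)\,U_Y^{\top}$ and $Z = U_Z\,\Lambda(Z)\,U_Z^{\top}$ with $U_Y, U_Z \in \mathbb{O}^{n}$, where $\Lambda(\,\bullet\,) = {\rm Diag}(\lambda(\,\bullet\,))$. Setting $Q := U_Y^{\top} U_Z \in \mathbb{O}^{n}$, a direct computation yields
\[
\langle Y,\,Z\rangle \;=\; \mathrm{tr}\bigl(\Lambda(Y)\,Q\,\Lambda(Z)\,Q^{\top}\bigr) \;=\; \sum_{i,j=1}^{n}\lambda_{i}(Y)\,\lambda_{j}(Z)\,Q_{ij}^{2}.
\]
The matrix $D := (Q_{ij}^{2})_{ij}$ is doubly stochastic since $Q$ is orthogonal, so Birkhoff's theorem gives $D = \sum_{k}\alpha_{k} P_{k}$ with $\alpha_{k}\geq 0$, $\sum_{k}\alpha_{k}=1$, and each $P_{k}$ a permutation matrix corresponding to some permutation $\pi_{k}$. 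Substituting and regrouping,
\[
\langle Y,\,Z\rangle \;=\; \sum_{k}\alpha_{k}\sum_{i=1}^{n}\lambda_{i}(Y)\,\lambda_{\pi_{k}(i)}(Z).
\]
Since $\lambda(Y)$ and $\lambda(Z)$ are both arranged in non-increasing order, the rearrangement inequality bounds each inner sum by $\lambda(Y)^{\top}\lambda(Z)$, and convex combination of these bounds with weights $\alpha_{k}$ produces the asserted inequality.

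For the equality case, sufficiency is immediate: if $Y = U\Lambda(Y)U^{\top}$ and $Z = U\Lambda(Z)U^{\top}$ share the orthogonal matrix $U$, then $\langle Y,Z\rangle = \mathrm{tr}(\Lambda(Y)\Lambda(Z)) = \lambda(Y)^{\top}\lambda(Z)$. For necessity, starting from equality, every permutation $\pi_{k}$ appearing with $\alpha_{k} > 0$ must itself saturate the rearrangement inequality, which forces $\pi_{k}$ to act only within indices where both $\lambda(Y)$ and $\lambda(Z)$ take constant values. Translating this back through $D = \sum_{k}\alpha_{k}P_{k}$, the support of $D$ is confined to index pairs $(i,j)$ with $\lambda_{i}(Y)=\lambda_{j}(Y)$ and $\lambda_{i}(Z)=\lambda_{j}(Z)$; in particular $Q$ itself is block diagonal with respect to the common refinement of the eigenspace partitions $\{\alpha^{l}\}$ from \eqref{eq:ak-symmetric} for $Y$ and $Z$. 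Using this block structure, I can then replace $U_Y$ by $U := U_Y\widetilde{Q}$ for an appropriate block-diagonal orthogonal $\widetilde{Q}$ so that $U^{\top}YU = \Lambda(Y)$ and $U^{\top}ZU = \Lambda(Z)$ simultaneously.

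The main obstacle is precisely this last step of the necessity argument: from the scalar equality one only controls the squared entries $Q_{ij}^{2}$ through $D$, and translating this into a block-diagonal structure on $Q$ itself requires carefully tracking which orthogonal rotations within shared eigenspaces are admissible. Once the block structure of $Q$ is established, however, assembling the shared orthogonal matrix $U$ that ordered-diagonalizes both $Y$ and $Z$ is a routine construction.
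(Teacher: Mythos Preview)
The paper does not actually prove this lemma; it is stated as Ky Fan's classical result and attributed to \cite{Fan49} without argument, so there is no ``paper's own proof'' to compare against. Your Birkhoff--rearrangement route for the inequality itself is standard and correct.

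There is, however, a genuine error in your necessity argument for the equality case. You assert that each optimal permutation $\pi_k$ ``acts only within indices where both $\lambda(Y)$ and $\lambda(Z)$ take constant values,'' and hence that the support of $D$ is confined to pairs $(i,j)$ with $\lambda_i(Y)=\lambda_j(Y)$ \emph{and} $\lambda_i(Z)=\lambda_j(Z)$. This is false. Take $n=3$, $\lambda(Y)=(2,1,1)$, $\lambda(Z)=(3,3,1)$, and let $\pi$ be the transposition of indices $2$ and $3$. Then $\sum_i \lambda_i(Y)\lambda_{\pi(i)}(Z)=6+1+3=10=\lambda(Y)^\top\lambda(Z)$, so $\pi$ saturates the rearrangement inequality; yet $\lambda_2(Z)=3\neq 1=\lambda_3(Z)$. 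With $Q$ equal to this permutation matrix one has $D_{23}=1$, so $Q$ is \emph{not} block diagonal with respect to the common refinement of the eigenspace partitions, and your construction of the shared $U=U_Y\widetilde{Q}$ with a block-diagonal $\widetilde{Q}$ collapses.

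The correct equality condition for rearrangement is weaker: an optimal $\pi$ need only map each level set of $\lambda(Y)$ to indices whose $\lambda(Z)$-values match, as a multiset, the corresponding contiguous block of $\lambda(Z)$. One clean way to finish is to show first that equality forces $YZ=ZY$ --- for instance via the first-order stationarity of $V\mapsto\mathrm{tr}(YVZV^\top)$ over $\mathbb{O}^n$ at $V=I$, which yields that $ZY$ is symmetric --- and then, once $Y$ and $Z$ are simultaneously diagonalizable, to reorder the common eigenbasis so that both spectra appear non-increasingly.
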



The next lemma is about the directional differentiability of the eigenvalue function, which can be found in, for example, \cite[Theorem 7]{Lancaster64} and \cite[Proposition 1.4]{Torki01}.

\begin{lemma}\label{prop:eigenvalue_diff}
	Let $X\in \mathbb{S}^n$   have the eigenvalue decomposition in (\ref{eq:eig-decomp}).
	Then for any $\mathbb{S}^n\ni H\to 0$, we have
\[
	\lambda_{i}(X+H)-\lambda_{i}(X)-\lambda_{k_{i}}({U}_{\alpha^{l}}^{\top} H{U}_{\alpha^{l}})=O(\|H\|^{2}),
	\quad i\in \alpha^{l}, \ l=1,\ldots,r,
\]
	where for each $i\in\{1,\ldots,n\}$, $k_{i}$ is defined in (\ref{eq:symmetric-l_i}).
	Hence,  for any given direction $H\in \mathbb{S}^{n}$, the eigenvalue function $\lambda_{i}(\cdot)$ is
	directionally differentiable at $X$  with the directional derivative
	$\lambda'_{i}(X;H)=\lambda_{k_{i}}({U}_{\alpha^{l}}^{\top} H{U}_{\alpha^{l}})$ for any $i\in \alpha^{l}$, $l=1,
	\ldots, r$.
\end{lemma}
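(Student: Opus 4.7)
The plan is to establish the quadratic expansion directly; the directional-derivative formula then follows by setting $H = tD$, $t\downarrow 0$, and invoking the positive homogeneity of each $\lambda_{k_i}(\cdot)$. First, by the orthogonal invariance of the spectrum, I may replace $X$ with $U^{\top} X U = {\rm Diag}(\lambda(X))$ and $H$ with $\wt H := U^{\top} H U$; the blocks $\wt H_{lm} := U_{\alpha^l}^{\top} H U_{\alpha^m}$ of $\wt H$, indexed by the clusters $\alpha^1,\ldots,\alpha^r$, then arise naturally, and the diagonal blocks $\wt H_{ll}$ are precisely the matrices whose ordered eigenvalues appear on the right-hand side of the claim.

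Next, I would invoke Weyl's inequality (an easy corollary of Lemma~\ref{lem:Fan}) to conclude $|\lambda_i(X+H) - \lambda_i(X)| \le \|H\|$, so that for $\|H\|$ sufficiently small the spectrum of $X+H$ splits into the $r$ clusters, one for each $\alpha^l$. The $l$-th cluster is then separated from the remaining eigenvalues by a gap of at least $\min_{m\ne l}|v_l(X) - v_m(X)| - 2\|H\|$, which is strictly positive for small perturbations. This spectral gap is the mechanism responsible for the second-order accuracy in the sequel.

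The heart of the argument is a block-diagonalization step. I would construct an orthogonal matrix $V = I + W + O(\|H\|^2)$ with $W = -W^{\top}$ block-off-diagonal, whose blocks are specified by $W_{lm} = \wt H_{lm}/\bigl(v_l(X) - v_m(X)\bigr)$ for $l\ne m$ (well-defined by the gap in Step 2). A direct computation then gives
\[
V^{\top}\bigl({\rm Diag}(\lambda(X)) + \wt H\bigr) V \;=\; \bigoplus_{l=1}^r \bigl(v_l(X) I + \wt H_{ll}\bigr) + R(H),
\]
with $\|R(H)\| = O(\|H\|^2)$. A second application of Weyl's inequality to this near-block-diagonal matrix, combined with the cluster separation from Step 2, yields $\lambda_i(X+H) = v_l(X) + \lambda_{k_i}(\wt H_{ll}) + O(\|H\|^2)$ for each $i\in\alpha^l$, which is the desired estimate since $\wt H_{ll} = U_{\alpha^l}^{\top} H U_{\alpha^l}$ and $\lambda_i(X) = v_l(X)$.

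The main obstacle will be the bookkeeping in the block-diagonalization step: one must verify that $V$ can be taken exactly orthogonal (for instance by polar-decomposing $I + W$, which only introduces further $O(\|H\|^2)$ corrections) and that the cross terms generated by conjugation collapse to a genuine second-order residual $R(H)$ rather than a first-order one. Once the quadratic estimate is in hand, the directional-derivative statement is immediate: setting $H = tD$ and using positive homogeneity $\lambda_{k_i}(tU_{\alpha^l}^{\top} D U_{\alpha^l}) = t\,\lambda_{k_i}(U_{\alpha^l}^{\top} D U_{\alpha^l})$, one divides by $t$ and lets $t\downarrow 0$.
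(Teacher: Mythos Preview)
The paper does not give its own proof of this lemma; it is stated as a background result with citations to Lancaster and Torki. Your proposal therefore supplies a self-contained argument where the paper simply quotes the literature.

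The strategy you outline---reduce to diagonal $X$ by orthogonal invariance, use Weyl's inequality to separate the eigenvalue clusters for small $\|H\|$, build an approximate block-diagonalizing rotation $V = I + W + O(\|H\|^2)$ to kill the off-diagonal blocks to second order, then apply Weyl once more---is standard and correct. One bookkeeping point to fix: with your choice $W_{lm} = \wt H_{lm}/(v_l(X)-v_m(X))$ and $W$ skew-symmetric, the first-order term in $V^{\top}(\Lambda + \wt H)V$ beyond $\Lambda + \wt H$ is $\Lambda W - W\Lambda$, whose $(l,m)$ off-diagonal block equals $(v_l-v_m)W_{lm} = +\wt H_{lm}$, so the off-diagonal part doubles rather than cancels. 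The correct sign is $W_{lm} = -\wt H_{lm}/(v_l(X)-v_m(X))$. With that correction the conjugation indeed yields $\bigoplus_l\bigl(v_l(X)I_{|\alpha^l|} + \wt H_{ll}\bigr) + O(\|H\|^2)$, and the remainder of your argument (polar-decomposing $I+W$ to make $V$ exactly orthogonal, a second use of Weyl, then specializing $H = tD$ and invoking positive homogeneity) goes through as written.
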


Let $l\in\{1,\ldots,r\}$ be fixed. Consider the following eigenvalue decomposition
of the symmetric matrix $U_{\alpha^{l}}^{\top} HU_{\alpha^{l}}\in\mathbb{S}^{|\alpha^{l}|}$:
\[
U_{\alpha^{l}}^{\top} HU_{\alpha^{l}}=R\Lambda(U_{\alpha^{l}}^{\top} HU_{\alpha^{l}})R^{\top},
\]
where $R\in\mathbb{O}^{|\alpha^{l}|}$. Denote the distinct eigenvalues of $U_{\alpha^{l}}^{\top} HU_{\alpha^{l}}$ by
$\tilde{v}_{1}>\tilde{v}_{2}>\ldots>\tilde{v}_{\tilde{r}}$. Define
\[
\tilde{\alpha}^{j}:=\left\{\,i\in \{1,\ldots,|\alpha^{l}|\}\;|\; \lambda_{i}(U_{\alpha^{l}}^\top  HU_{\alpha^{l}})=\tilde{v}_{j}\,\right\},\quad j=1,\ldots,\tilde{r}.
\]
 For each $i\in \alpha^{l}$, let $\tilde{k}_{i}\in\{1,\ldots,|\alpha^{l}|\}$ and
$\tilde{l}\in\{1,\ldots,\tilde{r}\}$ be such that
\[
\tilde{k}_{i}:=k_{k_{i}}(U_{\alpha^{l}}^{\top} HU_{\alpha^{l}}) \quad {\rm and} \quad
\tilde{k}_{i}\in\tilde{\alpha}^{\tilde{l}},
\]
 where $k_{i}$ is defined by (\ref{eq:symmetric-l_i}).

Let $\mathbb{Z}$ and $\mathbb{Z}'$ be two finite dimensional real Euclidean spaces. We say that a function $\Phi:\mathbb{Z}\to\mathbb{Z}'$ is (parabolic) second-order directionally differentiable at ${\bf z}\in\mathbb{Z}$, if $\Phi$ is directionally differentiable at ${\bf z}$ and for any ${\bf h}, {\bf w}\in\mathbb{Z}$,
\[
\lim_{t\downarrow 0}\frac{\Phi({\bf z}+t{\bf h}+\frac{1}{2}t^2{\bf w})-\Phi({\bf z})-t\, \Phi^{\,\prime}({\bf z};{\bf h})}{\frac{1}{2}t^2}\quad \mbox{exists.}
\]
In this case, the above limit is said to be the (parabolic) second-order directional derivative of $\Phi$ at ${\bf z}$ along the directions ${\bf h}$ and ${\bf w}$, which we denote as $\Phi''({\bf z};{\bf h},{\bf w})$. The following proposition, which has its source from \cite[Proposition 2.2]{Torki01}, provides an explicit formula of the (parabolic)
second-order directional derivative of the eigenvalue function. 

\begin{lemma}\label{prop:second-directional-diff-eigenvalue}
	Let $X\in\mathbb{S}^{n}$ have the eigenvalue decomposition \eqref{eq:eig-decomp}. Then for any 
	$H,W\in\mathbb{S}^{n}$, 
\[
	\lambda''_{i}(X;H,W)=\lambda_{\tilde{k}_{i}}\left(R_{\tilde{\alpha}^{\tilde{l}}}^{\top} \, U_{\alpha^{l}}^{\top}\left[W-2H(X-\lambda_{i}I_{n})^{\dag}H\right]U_{\alpha^{l}}R_{\tilde{\alpha}^{\tilde{l}}}\right),
	\quad i\in \alpha^{l},\; l\in\{1,\ldots,r\},
\]
	where $Z^{\dag}\in \mathbb{R}^{p\times p}$ is the Moore-Penrose pseudo-inverse of the square matrix $Z\in\mathbb{R}^{p\times p}$.
\end{lemma}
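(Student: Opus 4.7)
The plan is to obtain the second-order expansion
\[
\lambda_i(X+tH+\tfrac{1}{2}t^2 W) = \lambda_i(X) + t\,\lambda'_i(X;H) + \tfrac{1}{2}t^2\, \lambda''_i(X;H,W) + o(t^2)
\]
by a two-stage application of Rayleigh--Schr\"odinger perturbation theory, tracking the two successive eigenvalue splittings encoded by the index sets $\alpha^l$ and $\tilde{\alpha}^{\tilde{l}}$. Invoking the orthogonal invariance of the spectrum, I first reduce to the case $X=\mathrm{Diag}(\lambda(X))$ and $U=I_n$. In this normal form, $(X-\lambda_i(X) I_n)^{\dag}$ is the diagonal matrix whose $j$-th entry equals $(\lambda_j(X)-\lambda_i(X))^{-1}$ for $j\notin\alpha^l$ and $0$ for $j\in\alpha^l$, so the right-hand side of the claimed identity is unambiguous.

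For the first stage, let $P_0$ denote the orthogonal projector onto the $\lambda_i(X)$-eigenspace of $X$, put $Q_0=I_n-P_0$, and select a small circle $\Gamma$ in the complex plane that separates $\lambda_i(X)$ from the other distinct eigenvalues of $X$. For $|t|$ sufficiently small, $X(t):=X+tH+\tfrac{1}{2}t^2 W$ has exactly $|\alpha^l|$ eigenvalues enclosed by $\Gamma$, and the associated Riesz projector $P(t)$ is real analytic in $t$ with constant rank $|\alpha^l|$. A Neumann-series expansion of the resolvent $(X(t)-zI_n)^{-1}$ for $z\in\Gamma$ then shows that these $|\alpha^l|$ eigenvalues coincide with those of the reduced operator
\[
A(t):= \lambda_i(X)P_0+ t\,P_0 H P_0+\tfrac{1}{2}t^2\, P_0 [W-2H(X-\lambda_i(X) I_n)^{\dag}H] P_0 + O(t^3)
\]
acting on $\mathrm{range}(P_0)$. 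The contribution $-2 H(X-\lambda_i(X) I_n)^{\dag}H$ is the classical second-order Rayleigh--Schr\"odinger correction, arising from the coupling between the $\lambda_i(X)$-eigenspace and its orthogonal complement through $P_0 H Q_0 (X-\lambda_i(X) I_n)^{-1}|_{\mathrm{range}(Q_0)} Q_0 H P_0$.

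For the second stage, Lemma~\ref{prop:eigenvalue_diff} identifies the first-order correction as the $k_i$-th eigenvalue of $P_0 H P_0$, which equals $\lambda_{k_i}(U_{\alpha^l}^{\top} H U_{\alpha^l})=\tilde{v}_{\tilde{l}}$. Passing to the basis $U_{\alpha^l} R$ that diagonalizes $U_{\alpha^l}^{\top} H U_{\alpha^l}$, the linear-in-$t$ part of $A(t)$ becomes block-diagonal with respect to the partition $\{\tilde{\alpha}^{1},\ldots,\tilde{\alpha}^{\tilde r}\}$, so the sub-block $\tilde{\alpha}^{\tilde{l}}$ is decoupled from the other sub-blocks already at order $t$. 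Consequently no cross-block coupling contributes at order $t^2$, and a further Riesz-projector argument around $\tilde{v}_{\tilde{l}}$ in the spectrum of $U_{\alpha^l}^{\top} H U_{\alpha^l}$ reduces the second-order analysis to the $|\tilde{\alpha}^{\tilde{l}}|\times|\tilde{\alpha}^{\tilde{l}}|$ matrix
\[
R_{\tilde{\alpha}^{\tilde{l}}}^{\top}U_{\alpha^l}^{\top}[W-2H(X-\lambda_i(X) I_n)^{\dag}H]U_{\alpha^l}R_{\tilde{\alpha}^{\tilde{l}}}.
\]
Selecting the eigenvalue of rank $\tilde{k}_i$ in the non-increasing order then produces the stated formula.

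The main obstacle will be justifying the two-stage reduction rigorously: one must verify that the $O(t^3)$ remainder in $A(t)$ does not pollute the $O(t^2)$ coefficient extracted at the second stage, and that the non-increasing ordering of the eigenvalues of $X(t)$ is consistently tracked through both reductions so that the indices $k_i$ and $\tilde{k}_i$ refer to the same eigenvalue of $X(t)$. Both points follow from the uniformity in $t$ of the two Riesz-projector expansions combined with Weyl's monotonicity theorem, which ensures that the ordering of the eigenvalues of $X(t)$ near $\lambda_i(X)$ agrees with the ordering inherited from the successive reduced operators at each level.
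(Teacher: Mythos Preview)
The paper does not give its own proof of this lemma; it simply quotes the result from \cite[Proposition~2.2]{Torki01}. There is therefore no in-paper argument to compare against, only Torki's original one. Torki proceeds by iterating the first-order expansion of Lemma~\ref{prop:eigenvalue_diff}: he studies the difference quotient $t^{-1}\bigl(\lambda_i(X+tH+\tfrac12 t^2 W)-\lambda_i(X)\bigr)$, identifies its limit via Lemma~\ref{prop:eigenvalue_diff}, and then applies the same first-order lemma a second time to the reduced matrix $U_{\alpha^l}^{\top}HU_{\alpha^l}$ with the appropriate second-order perturbation, which produces the term $-2H(X-\lambda_i I_n)^{\dag}H$ after some algebra. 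Your route through Riesz projectors and a Neumann expansion of the resolvent is the classical Kato--Rellich analytic-perturbation approach and is a legitimate alternative; it makes the Schur-complement origin of the correction $-2H(X-\lambda_i I_n)^{\dag}H$ more transparent, at the price of invoking heavier machinery (contour integrals, analytic families of projectors) that Torki's bare-hands iteration avoids.

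Your sketch is correct in substance. The one step that deserves an explicit sentence rather than an assertion is the claim that ``no cross-block coupling contributes at order $t^2$'' in the second stage. The clean way to see this is to divide $A(t)-\lambda_i(X)P_0$ by $t$, obtaining $P_0HP_0+\tfrac{t}{2}B_2+O(t^2)$ with $B_2=P_0[W-2H(X-\lambda_i I_n)^{\dag}H]P_0$, and then apply Lemma~\ref{prop:eigenvalue_diff} once more: the first-order directional derivative of the eigenvalues of this rescaled family at $t=0$ picks out exactly the $\tilde{\alpha}^{\tilde{l}}$-block of $B_2$, and any cross-block part of $B_2$ enters only at order $t^2$ in the rescaled problem, hence $O(t^3)$ in the original. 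Stating it this way also makes the consistency of the ordering (your $k_i$ and $\tilde{k}_i$) immediate, since each stage is just another instance of Lemma~\ref{prop:eigenvalue_diff} and that lemma already respects the non-increasing order.
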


\subsection{Properties of convex piecewise affine functions}
Let $\phi:\mathbb{R}^n\to(-\infty, +\infty]$ be a proper convex piecewise affine function, i.e., $\phi$ is a convex function whose nonempty effective domain can be represented as the union of finitely many polyhedral sets, relative to each of which $\phi(x)$ is an affine function (cf. \cite[Definition 2.47]{rwets1998}). Such a function is also called a polyhedral convex function by Rockafellar~\cite[Section 19]{rockafellar1970}. Let $\phi$ be a proper convex piecewise affine function with the polyhedral effective domain 
\begin{equation}\label{eq:domphi}
	{\rm dom}\,\phi:=\left\{x\in\mathbb{R}^n\mid \psi(x):=\displaystyle\max_{1\le i\le q} \{ \langle {\bf b}^i,{\bf x}\rangle - d_{i}\}\leq 0 \right\}
\end{equation}
for some $\{({\bf b}^i, d_i)\in \mathbb{R}^n\times \mathbb{R}\}_{i=1}^q$ with a positive integer $q$. It is known  from~\cite[Theorem 2.49]{rwets1998} that $\phi$ can be expressed in the form of
\begin{equation}\label{eq:phisumform}
\phi({\bf x}) \, = \, \underbrace{\max_{1\le i\le p}\left\{\langle {\bf a}^i,{\bf x}\rangle -c_i\right\}}_{\mbox{denoted as $\phi_1({\bf x})$}} + \underbrace{\delta_{{\rm dom}\,\phi}({\bf x})}_{\mbox{denoted as $\phi_2({\bf x})$}},\quad {\bf x}\in\mathbb{R}^n
\end{equation}
for some $\{ ({\bf a}^i, c_i)\in \mathbb{R}^n\times \mathbb{R}\}_{i=1}^p$  with a  positive integer $p$.
 We call $\{{\bf a}^i\}_{i=1}^p$ and $\{{\bf b}^i\}_{i=1}^q$  the bases of $\phi_1$ and $\phi_2$, respectively. 

 We denote the set of all $n\times n$ permutation matrices as $\mathbb{P}^n$.
Recall that the function $\phi$ is called symmetric over $\mathbb{R}^n$ if for any $Q\in \mathbb{P}^{n}$ and any ${\bf x}\in\mathbb{R}^n$, it holds that $\phi({\bf x})=\phi(Q{\bf x})$. The following proposition characterizes the symmetric piecewise affine function.

\begin{proposition}\label{prop:pwlc-sym}
	Let $\phi:\mathbb{R}^n\to(-\infty,\infty]$ be a given proper convex piecewise affine function. Then the function $\phi$ is symmetric over $\mathbb{R}^n$ if and only if its decomposed components $\phi_1:\mathbb{R}^n\to\mathbb{R}$ and $\phi_2:\mathbb{R}^n\to (-\infty,\infty]$ in \eqref{eq:phisumform} satisfy the following conditions:
	\begin{equation}\label{eq:pwlc-sym}
	\left\{\begin{array}{ll}
	\phi_1({\bf x})= \displaystyle\max_{1\le i\le p}\left\{\max_{Q\in \mathbb{P}^n}\left\{ \langle Q{\bf a}^i,{\bf x}\rangle-c_i \right\} \right\}, \\[0.2in]
	  {\rm dom}\,\phi=\left\{{\bf x}\in\mathbb{R}^n \, \mid  \,\displaystyle\displaystyle{\max_{1\le i\le q}}\left\{\max_{Q\in \mathbb{P}^n}\left\{ \langle Q{\bf b}^i,{\bf x}\rangle-d_i \right\} \right\} \le 0 \right\},
	\end{array}\right.
	\quad \forall\;{\bf x}\in\mathbb{R}^n.
	\end{equation}
\end{proposition}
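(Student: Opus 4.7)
The plan is to prove the equivalence in two directions, with the sufficiency a direct computation and the necessity a symmetrization argument on the bases.

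For the sufficiency, I would assume the representations in \eqref{eq:pwlc-sym} and fix any $Q_0\in\mathbb{P}^n$. Substituting $Q_0{\bf x}$ in place of ${\bf x}$ into the formula for $\phi_1$ and using $\langle Q{\bf a}^i, Q_0{\bf x}\rangle = \langle Q_0^{\top} Q {\bf a}^i, {\bf x}\rangle$ rewrites the inner maximum as $\max_{Q\in\mathbb{P}^n}\{\langle Q_0^{\top} Q {\bf a}^i, {\bf x}\rangle - c_i\}$. Since left-multiplication by $Q_0^{\top}$ is a bijection of the finite group $\mathbb{P}^n$, the indexing set is unchanged, giving $\phi_1(Q_0{\bf x}) = \phi_1({\bf x})$. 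The identical manipulation applied to the defining inequality of ${\rm dom}\,\phi$ shows that ${\rm dom}\,\phi$ is permutation-invariant, so $\phi_2=\delta_{{\rm dom}\,\phi}$ is symmetric, and hence so is $\phi = \phi_1+\phi_2$.

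For the necessity, I interpret the claim as the assertion that the bases in \eqref{eq:phisumform} can be chosen so that \eqref{eq:pwlc-sym} holds. Starting from any bases $\{({\bf a}^i, c_i)\}_{i=1}^p$ and $\{({\bf b}^i, d_i)\}_{i=1}^q$ realizing \eqref{eq:phisumform}, I would form the symmetrized bases $\{Q{\bf a}^i : i,\, Q\in\mathbb{P}^n\}$ and $\{Q{\bf b}^i : i,\, Q\in\mathbb{P}^n\}$ and define
\[ \tilde\phi_1({\bf x}) := \max_{1\le i\le p}\max_{Q\in\mathbb{P}^n}\{\langle Q{\bf a}^i, {\bf x}\rangle - c_i\}, \qquad \tilde D := \left\{{\bf x} : \max_{1\le i\le q}\max_{Q\in\mathbb{P}^n}\{\langle Q{\bf b}^i,{\bf x}\rangle - d_i\}\le 0\right\}. \]
The key observation driving both identifications $\tilde D = {\rm dom}\,\phi$ and $\tilde\phi_1 = \phi$ on ${\rm dom}\,\phi$ is that ${\rm dom}\,\phi$ is itself symmetric, because $\phi({\bf x})<\infty$ if and only if $\phi(Q{\bf x})<\infty$ for every $Q$. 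Thus for ${\bf x}\in{\rm dom}\,\phi$ and any $Q$, $Q^{\top}{\bf x}\in{\rm dom}\,\phi$, so $\phi_1(Q^{\top}{\bf x}) = \phi(Q^{\top}{\bf x}) = \phi({\bf x})$, and the identity $\tilde\phi_1({\bf x}) = \max_Q \phi_1(Q^{\top}{\bf x})$ collapses to $\phi({\bf x})$ on ${\rm dom}\,\phi$. Similarly, $\langle Q{\bf b}^i,{\bf x}\rangle - d_i = \langle {\bf b}^i, Q^{\top}{\bf x}\rangle - d_i \le 0$ for all $i$, $Q$ and ${\bf x}\in{\rm dom}\,\phi$, yielding $\tilde D = {\rm dom}\,\phi$. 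These two facts show that $\phi = \tilde\phi_1 + \delta_{\tilde D}$ is a decomposition of $\phi$ of the form \eqref{eq:phisumform} whose bases realize \eqref{eq:pwlc-sym}.

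The main point to be careful about is that the decomposition \eqref{eq:phisumform} is not canonical: different choices of bases can represent the same $\phi$, and $\phi_1$ is only pinned down by $\phi$ on ${\rm dom}\,\phi$ (outside of which $\phi_2\equiv +\infty$ masks any extension). In the necessity direction the natural reading is therefore the existence of a basis choice for which \eqref{eq:pwlc-sym} holds, and the symmetrization above is the explicit construction; the sufficiency is a routine orbit argument requiring only the group structure of $\mathbb{P}^n$.
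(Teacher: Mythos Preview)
Your proposal is correct and follows essentially the same approach as the paper: both directions use the bijection $Q\mapsto Q_0^{\top}Q$ on $\mathbb{P}^n$ for sufficiency, and the symmetrization of the bases together with the permutation-invariance of ${\rm dom}\,\phi$ for necessity. Your explicit remark that the decomposition \eqref{eq:phisumform} is not canonical---so the necessity should be read as the existence of a basis choice realizing \eqref{eq:pwlc-sym}---is a useful clarification that the paper leaves implicit, since the paper likewise only establishes $\phi_1=\overline{\phi}$ on ${\rm dom}\,\phi$.
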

\begin{proof}
	``$\Longleftarrow$"
	Suppose that $\phi_1$ and $\phi_2$ satisfy the conditions in \eqref{eq:pwlc-sym}. Consider any ${\bf x}\in\mathbb{R}^n$ and  $Q'\in\mathbb{P}^n$. If ${\bf x}\notin {\rm dom}\,\phi$, then there exist ${\bf b}^{\hat{i}}\in\mathbb{R}^n$, ${d}_{\hat{i}}\in\mathbb{R}$ and $\widehat{Q}\in \mathbb{P}^n$ such that 
	$$
	\langle \widehat{Q}{\bf b}^{\hat{i}},{\bf x}\rangle-d_{\hat{i}} >0.
	$$
	Since $(Q')^\top  Q'= I$, we have
	$$
	\left\langle Q'\widehat{Q}{\bf b}^{\hat{i}}\,,\,Q' {\bf x}\right\rangle-d_{\hat{i}} >0.
	$$
	By noting that $Q'\widehat{Q}\in\mathbb{P}^n$, we conclude that $Q'{\bf x}\notin {\rm dom}\,\phi$, which implies that 
	$$
	\phi({\bf x})=\phi(Q'{\bf x})= +\infty.
	$$
	Otherwise if ${\bf x}\in {\rm dom}\,\phi$, we deduce that
	\begin{eqnarray*}
		\displaystyle{\max_{1\le i\le q}}\left\{\max_{Q\in \mathbb{P}^n}\left\{ \langle Q{\bf b}^i,Q'{\bf x}\rangle-d_i\right\} \right\}&=&\displaystyle{\max_{1\le i\le q}}\left\{\max_{Q\in \mathbb{P}^n}\left\{ \langle Q'^{\top}Q{\bf b}^i,{\bf x}\rangle-d_i \right\} \right\} \\ [3pt]
		&=& \displaystyle{\max_{1\le i\le q}}\left\{\max_{Q'^{\top}Q\in \mathbb{P}^n}\left\{ \langle Q'^{\top}Q{\bf b}^i,{\bf x}\rangle-d_i \right\} \right\}\le 0,
	\end{eqnarray*}	  
which implies that $Q'{\bf x}\in {\rm dom}\,\phi$. Moreover, we have 
	\begin{eqnarray*}
	\phi(Q'{\bf x})=\phi_1(Q'{\bf x})&=&\max_{1\le i\le p}\left\{\max_{Q\in \mathbb{P}^n}\left\{ \langle Q{\bf a}^i,Q'{\bf x}\rangle-c_i \right\} \right\}=\max_{1\le i\le p}\left\{\max_{Q\in \mathbb{P}^n}\left\{ \langle Q'^{\top}Q{\bf a}^i,{\bf x}\rangle-c_i \right\} \right\} \\ [3pt]
	&=& \max_{1\le i\le p}\left\{\max_{Q'^{\top}Q\in \mathbb{P}^n}\left\{ \langle Q'^{\top}Q{\bf a}^i,{\bf x}\rangle-c_i \right\} \right\}=\phi_1({\bf x})=\phi({\bf x}).
	\end{eqnarray*}
Thus, we know that $\phi$ is symmetric over $\mathbb{R}^n$.

``$\Longrightarrow$" Assume that $\phi$ is a proper convex piecewise affine function with the decomposition in \eqref{eq:phisumform}. Denote 
$$
{\cal Z}:=\left\{{\bf x}\in\mathbb{R}^n\; \bigg|\; \displaystyle{\max_{1\le i\le q}}\left\{\max_{Q\in \mathbb{P}^n}\left\{ \langle Q{\bf b}^i,{\bf x}\rangle-d_i \right\} \right\} \le 0 \right\}.
$$
Obviously ${\cal Z}\subseteq {\rm dom}\,\phi$. Suppose that ${\bf x}\in {\rm dom}\,\phi$ is arbitrarily chosen .  It follows from the symmetric property of $\phi$ that 
\[
\left\langle Q{\bf b}^i,{\bf x}\right\rangle-d_i= \left\langle {\bf b}^i,Q^{\top}{\bf x}\right\rangle-d_i\le 0,  \quad \forall\; Q\in \mathbb{P}^n, \quad \forall \;i=1,\ldots,q \;,
\]
which shows that ${\bf x}\in{\cal Z}$. Therefore, we have ${\rm dom}\,\phi={\cal Z}$.

Denote 
$$
\overline{\phi}(x):=\displaystyle{\max_{1\le i\le p}\left\{\max_{Q\in \mathbb{P}^n}\left\{ \langle Q{\bf a}^i,{\bf x}\rangle-c_i \right\} \right\}}, \quad {\bf x}\in  {\rm dom}\,\phi.
$$ 
It is clear that $\phi({\bf x})\le \overline{\phi}({\bf x})$ for any ${\bf x}\in  {\rm dom}\,\phi$. On the other hand, for any ${\bf x}\in {\rm dom}\,\phi$, there exist $\bar{i}\in\{1,\ldots,p\}$ and $\overline{Q}\in\mathbb{P}^n$ such that
\[
\overline{\phi}({\bf x})=\big\langle \overline{Q}{\bf a}^{\bar{i}},\,{\bf x}\big\rangle - c_{\,\bar{i}} = \big\langle {\bf a}^{\bar{i}},\,\overline{Q}^{\top}{\bf x}\big\rangle - c_{\,\bar{i}} \le \max_{1\le i\le p}\left\{\big\langle {\bf a}^{i}, \,\overline{Q}^{\top}{\bf x}\big\rangle - c_{i}\right\}=\phi(\overline{Q}^{\top}{\bf x}).
\]
Finally, since $\phi$ is symmetric over $\mathbb{R}^n$, we have $\phi(\overline{Q}^{\top}{\bf x})=\phi({\bf x}) $. Thus, we know from the above inequality that $\overline{\phi}({\bf x})\le \phi({\bf x})$ for any ${\bf x}\in {\rm dom}\,\phi$. Therefore, we know that $\phi({\bf x})=\overline{\phi}({\bf x})$ for any ${\bf x}\in {\rm dom}\,\phi$. The proof is completed. \qed
\end{proof}

\begin{remark}\label{remark:symmetric basis}
It is easy to verify from Proposition \ref{prop:pwlc-sym} that if a proper convex piecewise affine function $\phi$ is symmetric, then both of its components $\phi_1$ and $\phi_2$ in  \eqref{eq:phisumform} are symmetric.
\end{remark}

 To proceed, we denote 
$$
{\cal D}_i := \left\{{\bf x}\in {\rm dom}\,\phi\mid \langle {\bf a}^j,{\bf x}\rangle - c_j \leq\langle {\bf a}^i,{\bf x}\rangle -c_i ,\, \forall\, j=1,\ldots, p\right\}, \quad i = 1,  \ldots, p.
$$
It follows from \cite[Proposition 3.2]{msarabi2016} that ${\rm dom}\,\phi = \displaystyle\bigcup_{i=1,\ldots,p} {\cal D}_i$.
For any $\overline{\bf x}\in {\rm dom}\,\phi$, we further denote the following two  index sets:
\begin{equation}\label{index1}
\iota_1(\overline{\bf x}) := \{1\leq i\leq p\mid \overline{\bf x}\in {\cal D}_i\}  \quad {\rm and}\quad  \iota_2(\overline{\bf x}) := \{1\leq i\leq q\mid \langle {\bf b}^i,\overline{\bf x}\rangle - d_i= 0\}.
\end{equation}
It is known that the pointwise-max function $\phi_1$ in \eqref{eq:phisumform} and $\psi$  in \eqref{eq:domphi} are directionally differentiable everywhere with the following directional derivatives (see, e.g., \cite[Example 2.68]{BShapiro00})
\begin{equation}\label{eq:dirdev_12}
\phi_1'(\overline{\bf x};{\bf h})=\max_{i\in\iota_1(\overline{\bf x})}\langle {\bf a}^i,{\bf h} \rangle\quad {\rm and}\quad \psi'(\overline{\bf x};{\bf h})=\max_{i\in\iota_2(\overline{\bf x})}\langle {\bf b}^i,{\bf h} \rangle,\quad {\bf h}\in\mathbb{R}^n.
\end{equation}
Denote $\partial f$ as the subgradient of a convex function $f$.
It also holds that
\begin{equation}\label{subgradient}
\partial \phi_1(\overline{\bf x})={\rm conv}\{{\bf a}^i,\ i\in\iota_1(\overline{\bf x})\} \quad {\rm and}\quad \partial \phi_2(\overline{\bf x})=\mathcal{N}_{{\rm dom}\,\phi}(\overline{\bf x})={\rm cone}\{{\bf b}^i,\ i\in\iota_2(\overline{\bf x}) \},
\end{equation}
where `${\rm conv}\,{\cal C}$' and  `${\rm cone} \,{\cal C}$' stand for the convex hall and conic hall of a given nonempty closed set ${\cal C}$, respectively (if $\iota_2(\overline{\bf x}) = \emptyset$, then $\partial\phi_2(\bar{x}) = \{0\}$).
Therefore, for any given $\overline{\bf y}\in \partial \phi_1(\overline{\bf x})$ and $\overline{\bf z}\in \partial \phi_2(\overline{\bf x})$, we are able to define the following two index sets 
\begin{equation}\label{eq:def-index-eta-1}
\left\{\begin{array}{ll}
\eta_1(\overline{\bf x},\overline{\bf y}):=\big\{i\in\iota_1(\overline{\bf x})\mid \displaystyle\sum_{i\in\iota_1(\overline{\bf x})}u_i \, {\bf a}^i=\overline{\bf y},\ \sum_{i\in\iota_1(\overline{\bf x})}u_i=1,\ 0<u_i\le 1 \big\}, 
\\[0.2in]
\eta_2(\overline{\bf x},\overline{\bf z}):=\big\{i\in\iota_2(\overline{\bf x})\mid \displaystyle\sum_{i\in\iota_2(\overline{\bf x})}u_i{\bf b}^i=\overline{\bf z}, \ u_i>0 \big\}.
\end{array}\right.
\end{equation}
The following corollary is a direct consequence of Proposition \ref{prop:pwlc-sym}.
\begin{corollary}\label{remark:Ix}
The following two statements hold.
\vskip 0.1in
\noindent
	(i) For any $i\in \iota_1(\overline{\bf x})$, $j\in\iota_2(\overline{\bf x})$ and  $Q\in\mathbb{P}^n_{\overline{\bf x}}$ (i.e., $Q\overline{\bf x}=\overline{\bf x}$),  there exist  $i'\in  \iota_1(\overline{\bf x})$ and $j'\in  \iota_2(\overline{\bf x})$ such that ${\bf a}^{i'}=Q{\bf a}^i$ and ${\bf b}^{j'}=Q{\bf b}^j$, respectively. 
\vskip 0.1in	
	\noindent
	(ii) For any $i\in \eta_1(\overline{\bf x}, \overline{\bf y})$, $j\in\eta_2(\overline{\bf x}, \overline{\bf z})$, $Q^{1}\in\mathbb{P}^n_{\overline{\bf x}}\cap\mathbb{P}^n_{\overline{\bf y}} $ and $Q^{2}\in\mathbb{P}^n_{\overline{\bf x}}\cap\mathbb{P}^n_{\overline{\bf z}} $,   there exist  $i'\in  \eta_1(\overline{\bf x}, \overline{\bf y})$ and $j'\in  \eta_2(\overline{\bf x},\overline{\bf z})$ such that ${\bf a}^{i'}=Q^{1}\,{\bf a}^i$ and ${\bf b}^{j'}=Q^{2}\,{\bf b}^j$, respectively.
\end{corollary}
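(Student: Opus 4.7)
The plan is to reduce both parts of the corollary to a single structural fact extracted from Proposition \ref{prop:pwlc-sym} and Remark \ref{remark:symmetric basis}: since $\phi$ is symmetric, the forward direction of the proposition lets us assume without loss of generality that each basis family in the decomposition \eqref{eq:phisumform} is closed under the action of $\mathbb{P}^n$ with the constants carried along, i.e., for every $Q\in\mathbb{P}^n$ and every $i\in\{1,\ldots,p\}$ (resp. $j\in\{1,\ldots,q\}$) there exists $i'$ with ${\bf a}^{i'}=Q{\bf a}^i$ and $c_{i'}=c_i$ (resp. $j'$ with ${\bf b}^{j'}=Q{\bf b}^j$ and $d_{j'}=d_j$). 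This is the only structural input needed; the rest is algebra built on the identity $Q\overline{\bf x}=\overline{\bf x}$ (hence $Q^{\top}\overline{\bf x}=\overline{\bf x}$, since $Q$ is orthogonal).

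For part (i), I would fix $i\in\iota_1(\overline{\bf x})$ and $Q\in\mathbb{P}^n_{\overline{\bf x}}$, choose an index $i'$ produced by the closure property so that ${\bf a}^{i'}=Q{\bf a}^i$ and $c_{i'}=c_i$, and then verify $i'\in\iota_1(\overline{\bf x})$ by the chain of identities
\[
\langle {\bf a}^{i'},\overline{\bf x}\rangle-c_{i'} \, = \, \langle Q{\bf a}^i,\overline{\bf x}\rangle-c_i \, = \, \langle {\bf a}^i,Q^{\top}\overline{\bf x}\rangle-c_i \, = \, \langle {\bf a}^i,\overline{\bf x}\rangle-c_i,
\]
which already attains the pointwise maximum defining $\iota_1(\overline{\bf x})$ by the assumption $i\in\iota_1(\overline{\bf x})$. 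The matching argument for $j\in\iota_2(\overline{\bf x})$ just replaces the maximum identity by the equation $\langle {\bf b}^{j'},\overline{\bf x}\rangle - d_{j'} = \langle {\bf b}^{j},\overline{\bf x}\rangle - d_{j} = 0$.

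For part (ii), I would start from a representation $\overline{\bf y}=\sum_{k\in\iota_1(\overline{\bf x})}u_k{\bf a}^k$ with $\sum_k u_k=1$ and $u_i>0$ witnessing $i\in\eta_1(\overline{\bf x},\overline{\bf y})$, apply $Q^1\in\mathbb{P}^n_{\overline{\bf x}}\cap\mathbb{P}^n_{\overline{\bf y}}$ to both sides, and use $Q^1\overline{\bf y}=\overline{\bf y}$ together with part (i) to rewrite this as $\overline{\bf y}=\sum_k u_k{\bf a}^{k^*}$ with each $k^*\in\iota_1(\overline{\bf x})$ defined by ${\bf a}^{k^*}=Q^1{\bf a}^k$; the image $i^*$ of $i$ then serves as the desired $i'$, with weight at least $u_i>0$. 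For $j$ the same scheme works verbatim after observing via \eqref{subgradient} that the representation is conic (not convex) and that $Q^2\overline{\bf z}=\overline{\bf z}$ plays the role previously played by $Q^1\overline{\bf y}=\overline{\bf y}$.

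The only technical nuisance is bookkeeping in part (ii): the map $k\mapsto k^*$ may fail to be injective, so the ``new'' combination must be obtained by consolidating the coefficients of repeated indices. This is routine once one notes that the consolidated weights still sum to $\sum_k u_k=1$ in the convex case (and remain nonnegative in the conic case), and that the consolidated weight sitting on $i^*$ is bounded below by $u_i>0$. No other subtlety arises; the corollary is truly a direct consequence of the permutation closure of the bases furnished by Proposition \ref{prop:pwlc-sym}.
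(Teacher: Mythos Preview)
Your proposal is correct and is precisely the elaboration the paper leaves implicit: the paper gives no proof beyond declaring the corollary ``a direct consequence of Proposition~\ref{prop:pwlc-sym},'' and your argument unpacks exactly that consequence---passing to the permutation-closed basis guaranteed by \eqref{eq:pwlc-sym}, then using $Q^{\top}\overline{\bf x}=\overline{\bf x}$ for part~(i) and pushing a witnessing convex (resp.\ conic) representation through $Q^1$ (resp.\ $Q^2$) for part~(ii), with the coefficient-consolidation caveat you note. There is nothing to add.
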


Let $\mathbb{Z}$ be a  finite dimensional Euclidean space and $\varpi:\mathbb{Z}\to(-\infty,\infty]$ be a  proper closed convex function. The Moreau envelop and proximal mapping of  $\varpi$ are defined by
\begin{equation}\label{eq:MY-phi}
	\chi_{\varpi}({\bf z}):=\min_{{\bf w}\in\mathbb{Z}}\left\{\varpi({\bf w})+\frac{1}{2}\|{\bf w}-{\bf z}\|^2 \right\} \epc \mbox{and}\epc {\rm Pr}_{\varpi}({\bf z}):=\displaystyle\operatornamewithlimits{argmin}_{{\bf w}\in\mathbb{Z}}\left\{\varpi({\bf w})+\frac{1}{2}\|{\bf w}-{\bf z}\|^2 \right\}, \quad {\bf z}\in\mathbb{Z}.
\end{equation}
It is known that ${\rm Pr}_{\varpi}$ is globally Lipschitz continuous with modulus $1$ \cite[Proposition 12.19]{rwets1998}.  The directional derivative of the proximal mapping is closely related to the critical cone associated with the generalized equation $\overline{\bf z}\in\partial\varpi({\bf z})$, which is defined as
\[
{\cal C}({\bf \bar{z}};\partial\varpi({\bf z})):=\left\{{\bf d}\in\mathbb{R}^n\mid \varpi'({\bf z};{\bf d})=\langle {\bf \bar{z}} - {\bf z},\, {\bf d}\rangle  \right\}.
\]
The following proposition shows the directional derivative of the proximal mappings associated with $\partial \phi_1$ and $\partial \phi_2$. Necessary and sufficient conditions for them to be F(r\'{e}chet)-differentiable are also provided.
\begin{proposition}\label{prop:MY-phi-direction-diff}
	Let $\phi_1:\mathbb{R}^n\to\mathbb{R}$ and $\phi_2:\mathbb{R}^n\to (-\infty,\infty]$ by given by \eqref{eq:phisumform}. 
	Then the following two properties hold for the corresponding  proximal mappings ${\rm Pr}_{\phi_1}:\mathbb{R}^n\to \mathbb{R}^n$ and ${\rm Pr}_{\phi_2}:\mathbb{R}^n\to \mathbb{R}^n$.\\[0.1in]
	\noindent 
	(i) ${\rm Pr}_{\phi_1}$ and ${\rm Pr}_{\phi_2}$ are directionally differentiable everywhere with the directional derivatives  
\begin{equation}\label{eq:dir-diff-phi1}
\left\{\begin{array}{ll}
{\rm Pr}_{\phi_1}'({\bf x};{\bf h})=\displaystyle\operatornamewithlimits{argmin}_{{\bf d}\in \mathbb{R}^n}\left\{ \|{\bf d}-{\bf h}\|^2\mid {\bf d}\in {\cal C}({\bf x};\partial\phi_1({\rm Pr}_{\phi_1}({\bf x})))\right\}\\[0.2in]

	{\rm Pr}_{\phi_2}'({\bf x};{\bf h})=\displaystyle\operatornamewithlimits{argmin}_{{\bf d}\in \mathbb{R}^n}\left\{ \|{\bf d}-{\bf h}\|^2\mid {\bf d}\in {\cal C}({\bf x};\partial\phi_2({\rm Pr}_{\phi_2}({\bf x})))\right\}
	\end{array}\right. \quad {\bf x, h}\in \mathbb{R}^n.
\end{equation} 
(ii) Let the index sets  $\iota_1$, $\iota_2$, $\eta_1$ and $\eta_2$ be  given by \eqref{index1} and \eqref{eq:def-index-eta-1}, respectively. Then ${\rm Pr}_{\phi_1}$ is F-differentiable at  ${\bf x}$ if and only if
$$\eta_1\left(\,{\rm Pr}_{\phi_1}({\bf x}),\,{\bf x}-{\rm Pr}_{\phi_1}({\bf x})\,\right)=\iota_1\left(\,{\rm Pr}_{\phi_1}({\bf x})\,\right);$$ similarly, ${\rm Pr}_{\phi_2}$ is F-differentiable at ${\bf x}$  if and only if 
$$\eta_2\left(\,{\rm Pr}_{\phi_2}({\bf x}), \, {\bf x}-{\rm Pr}_{\phi_2}({\bf x}) \,\right)=\iota_2\left(\, {\rm Pr}_{\phi_2}({\bf x})\, \right).$$
 Moreover, under the above two conditions, the derivatives ${\rm Pr}_{\phi_1}'({\bf x})$ and ${\rm Pr}_{\phi_2}'({\bf y})$ are given by
\begin{equation}\label{eq:diff-phi}
\left\{\begin{array}{ll}

{\rm Pr}_{\phi_1}'({\bf x})\,{\bf h}=\displaystyle\operatornamewithlimits{argmin}_{{\bf d}\in \mathbb{R}^n}\left\{ \, \|{\bf d}-{\bf h}\|^2\mid \langle {\bf d},{\bf a}^i-{\bf a}^j\rangle =0,\ i,j\in\iota_1\left(\,{\rm Pr}_{\phi_1}({\bf x})\,\right)\,\right\} \\[0.2in]
	{\rm Pr}_{\phi_2}'({\bf x})\,{\bf h}=\displaystyle\operatornamewithlimits{argmin}_{{\bf d}\in \mathbb{R}^n}\left\{\, \|{\bf d}-{\bf h}\|^2\mid \langle {\bf d},{\bf b}^i\rangle =0,\ i\in\iota_2\left(\,{\rm Pr}_{\phi_2}({\bf x})\,\right) \, \right\}
		\end{array}\right.\quad  {\bf x, h}\in\mathbb{R}^n.
\end{equation}

\end{proposition}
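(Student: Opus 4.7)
The plan is to leverage the fact that both $\phi_1$ and $\phi_2$ are proper closed convex piecewise affine functions, so that their proximal mappings are themselves globally piecewise affine (Lipschitz with modulus $1$ and affine on a polyhedral partition of $\mathbb{R}^n$); this reduces part (i) to a classical computation via the critical cone, and part (ii) to a combinatorial analysis of when a projection onto a polyhedral cone collapses to a linear map. For part (i), I would invoke the second-order epi-differentiability framework: if a proper closed convex function $\varpi$ is twice epi-differentiable at $\bar{\bf w}$ relative to the subgradient ${\bf v}={\bf x}-\bar{\bf w}$, then ${\rm Pr}_{\varpi}$ is semi-differentiable at ${\bf x}$ with ${\rm Pr}_{\varpi}'({\bf x};{\bf h})$ equal to the proximal mapping of $\tfrac{1}{2}\mathrm{d}^2\varpi(\bar{\bf w}|{\bf v})$ evaluated at ${\bf h}$. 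Since $\phi_1$ and $\phi_2$ are polyhedral, they are twice epi-differentiable everywhere and their second-order epi-derivatives coincide with the indicator functions of the critical cones $\mathcal{C}({\bf x};\partial\phi_k({\rm Pr}_{\phi_k}({\bf x})))$; the proximal mapping of such an indicator is exactly the Euclidean projection, delivering \eqref{eq:dir-diff-phi1}. A self-contained alternative is to perturb the optimality condition $0\in {\bf w}_t-({\bf x}+t{\bf h})+\partial\phi_k({\bf w}_t)$ with ${\bf w}_t:={\rm Pr}_{\phi_k}({\bf x}+t{\bf h})$, use the global $1$-Lipschitzness of the proximal map to keep $({\bf w}_t-\bar{\bf w})/t$ bounded, and verify that every cluster point solves the variational inequality characterizing the projection onto the critical cone.

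For part (ii), since each ${\rm Pr}_{\phi_k}$ is a piecewise affine selection of finitely many affine maps, F-differentiability of ${\rm Pr}_{\phi_k}$ at ${\bf x}$ is equivalent to its B-derivative ${\rm Pr}_{\phi_k}'({\bf x};\cdot)$ being a linear function of ${\bf h}$; combined with part (i), this boils down to asking when the projection onto a polyhedral cone is linear, and the answer is: precisely when that cone is a linear subspace. It then remains to rephrase the subspace condition combinatorially. For $\phi_1$, substituting $\phi_1'(\bar{\bf w};{\bf d})=\max_{i\in\iota_1(\bar{\bf w})}\langle{\bf a}^i,{\bf d}\rangle$ from \eqref{eq:dirdev_12} together with the convex-combination representation ${\bf x}-\bar{\bf w}=\sum_{i\in\eta_1}u_i{\bf a}^i$ (all $u_i>0$) into the critical cone definition yields $\mathcal{C}=\{{\bf d}:\langle{\bf a}^i,{\bf d}\rangle=\max_{k\in\iota_1(\bar{\bf w})}\langle{\bf a}^k,{\bf d}\rangle\ \forall\,i\in\eta_1\}$; the implication ``$\eta_1=\iota_1\Rightarrow\mathcal{C}$ is the linear subspace $\{{\bf d}:\langle{\bf a}^i-{\bf a}^j,{\bf d}\rangle=0\ \forall\,i,j\in\iota_1\}$'' is immediate and yields the formula for ${\rm Pr}'_{\phi_1}({\bf x}){\bf h}$ in \eqref{eq:diff-phi}. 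The analogous computation for $\phi_2$ gives $\mathcal{C}=\{{\bf d}:\langle{\bf b}^i,{\bf d}\rangle\le 0\ \forall\,i\in\iota_2,\ \langle{\bf b}^i,{\bf d}\rangle=0\ \forall\,i\in\eta_2\}$, which is a subspace exactly when $\eta_2=\iota_2$ (a strict-complementarity type condition on the active facets of ${\rm dom}\,\phi$).

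The main obstacle I anticipate is the converse direction in (ii)---extracting the combinatorial equality $\eta_k=\iota_k$ from the geometric subspace-ness of $\mathcal{C}$. This requires either a minimality/irredundancy assumption on the bases $\{{\bf a}^i\}_{i=1}^p$ and $\{{\bf b}^i\}_{i=1}^q$ implicit in the representation \eqref{eq:phisumform}, or a direct construction: for each $k\in\iota_1\setminus\eta_1$ one should exhibit a ${\bf d}\in\mathcal{C}$ with $\langle{\bf a}^k-{\bf a}^i,{\bf d}\rangle<0$ strictly for some $i\in\eta_1$, so that $-{\bf d}\notin\mathcal{C}$ and $\mathcal{C}$ is forced not to be a subspace; the parallel construction in the $\phi_2$ case picks a ${\bf d}\in\mathcal{C}$ with $\langle{\bf b}^k,{\bf d}\rangle<0$ for $k\in\iota_2\setminus\eta_2$. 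Existence of such separating directions is where the reliance on a minimal representation of $\phi_1$ (respectively the linear independence of the generators $\{{\bf b}^i\}_{i\in\iota_2}$) must be carefully articulated and, where necessary, invoked.
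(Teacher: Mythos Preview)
Your overall strategy matches the paper's: for (i) the paper simply cites \cite[Proposition~7.1 and Theorem~7.2]{BCShapiro98}, which is the second-order-regularity route you describe (your epi-differentiability variant is equivalent); for (ii) the paper reduces F-differentiability to linearity of the B-derivative via \cite[Corollary~4.1.2]{FPang2003} and then invokes \cite[Proposition~3.2]{msarabi2016b} for the equivalence ``critical cone is a subspace $\Leftrightarrow$ $\eta_k=\iota_k$''. So the architecture of your argument is the same; you are just unpacking what the paper outsources.

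Your anticipated obstacle, however, is not a real one, and no minimality or irredundancy hypothesis is needed. For $\phi_1$: write $\bar{\bf w}={\rm Pr}_{\phi_1}({\bf x})$, $\bar{\bf y}={\bf x}-\bar{\bf w}$, and let $F$ be the minimal face of the polytope $\partial\phi_1(\bar{\bf w})={\rm conv}\{{\bf a}^i:i\in\iota_1(\bar{\bf w})\}$ containing $\bar{\bf y}$. A standard fact gives $\eta_1(\bar{\bf w},\bar{\bf y})=\{i\in\iota_1(\bar{\bf w}):{\bf a}^i\in F\}$, and every face of a polytope satisfies $F={\rm aff}(F)\cap\partial\phi_1(\bar{\bf w})$. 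Hence if $k\in\iota_1\setminus\eta_1$ then ${\bf a}^k\notin F$, so ${\bf a}^k\notin{\rm aff}(F)$; any supporting hyperplane to $\partial\phi_1(\bar{\bf w})$ that exposes $F$ (such a hyperplane exists because $F$ is a face) has normal ${\bf d}$ satisfying $\langle{\bf a}^i,{\bf d}\rangle=\max_{j\in\iota_1}\langle{\bf a}^j,{\bf d}\rangle$ for all $i\in\eta_1$ and $\langle{\bf a}^k,{\bf d}\rangle$ strictly smaller, so ${\bf d}\in\mathcal{C}$ but $-{\bf d}\notin\mathcal{C}$. For $\phi_2$ the same scheme works with the cone version of the face identity: if $F$ is a face of the polyhedral cone $C={\rm cone}\{{\bf b}^i:i\in\iota_2\}$, then $F=C\cap{\rm span}(F)$ (because $f_1=x+f_2$ with $x,f_2\in C$ and $f_1\in F$ forces $x\in F$), so ${\bf b}^k\in{\rm span}\{{\bf b}^i:i\in\eta_2\}$ would already force $k\in\eta_2$. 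Thus the separating direction you propose always exists, without any assumption on the representation \eqref{eq:phisumform}.
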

\begin{proof}
Statement (i) follows from  \cite[Proposition 7.1 and Theorem 7.2]{BCShapiro98}. To prove  statement (ii), we first note that based on similar arguments of \cite[Corollary 4.1.2]{FPang2003}, ${\rm Pr}_{\phi_1}$ and ${\rm Pr}_{\phi_2}$ are F(r\'{e}chet)-differentiable at ${\bf x}$ and ${\bf y}$ if and only if the critical cones ${\cal C}({\bf x};\partial\phi_1({\rm Pr}_{\phi_1}({\bf x})))$ and ${\cal C}({\bf y};\partial\phi_2({\rm Pr}_{\phi_2}({\bf x})))$ are two linear subspaces in $\mathbb{R}^n$. The stated results then follow from \cite[Proposition 3.2]{msarabi2016b}.
\end{proof}


\section{Variational analysis of spectral functions}
\label{sec: spectral functions}

In this section, we  study several important variational properties of the spectral function $\theta\equiv\phi\circ\lambda$ for a symmetric piecewise affine function $\phi$. According to the decomposition of $\phi = \phi_1 + \phi_2$ in \eqref{eq:phisumform}, the function $\theta$ can be decomposed as 
\begin{equation}\label{eq:thetasum}
	\theta(X) \, = \, \underbrace{\phi_1\circ\lambda(X)}_{\mbox{denoted as $\theta_1(X)$}} \, + \, \underbrace{\phi_2\circ\lambda(X)}_{\mbox{denoted as $\theta_2(X)$}}, \quad X\in \mathbb{S}^n.
\end{equation}
The following lemma on the subdifferentials of spectral functions can be found in~\cite{lewis1996,Lewis1996a},
\begin{lemma}\label{lemma:subdiff-spectral-function}
	Let $\phi:\mathbb{R}^n\to (-\infty, +\infty]$ be a proper closed convex and  symmetric function.
	 Let $X\in\mathbb{S}^n$ have the eigenvalue $\lambda(X)$ in ${\rm dom}\, \phi$. Let $W\in\mathbb{S}^n$. Then   $W\in\partial (\phi\circ \lambda)(X)$ if and only if $\lambda(W)\in\partial \phi(\lambda(X))$ and there exists $U\in\mathbb{O}^n(X)\cap \mathbb{O}^n(W)$. In fact,
	$\partial (\phi\circ \lambda)(X) =
\{U{\rm Diag}(\mu)U^\top \mid \mu\in  \partial g(\lambda(X)),\; U \in \mathbb{O}^n(X)\}$.
	
\end{lemma}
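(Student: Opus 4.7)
The plan is to combine Ky Fan's inequality (Lemma \ref{lem:Fan}) with the convexity and symmetry of $\phi$. For the ``if'' direction, given $\lambda(W)\in\partial\phi(\lambda(X))$ and $U\in\mathbb{O}^n(X)\cap\mathbb{O}^n(W)$, I would start from the subgradient inequality $\phi(\lambda(Y))\ge\phi(\lambda(X))+\langle\lambda(W),\lambda(Y)-\lambda(X)\rangle$, then apply Ky Fan's upper bound $\langle W,Y\rangle\le\lambda(W)^\top\lambda(Y)$ together with the identity $\langle W,X\rangle=\lambda(W)^\top\lambda(X)$ (immediate from $U\in\mathbb{O}^n(X)\cap\mathbb{O}^n(W)$) to rewrite the right-hand side as $\phi(\lambda(X))+\langle W,Y-X\rangle$, yielding $W\in\partial(\phi\circ\lambda)(X)$.

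The ``only if'' direction is the delicate part and needs two carefully chosen test matrices. First, for any $V\in\mathbb{O}^n(W)$, I would substitute $Y=V\,{\rm Diag}(\lambda(X))\,V^\top$ into the subgradient inequality; since $\lambda(Y)=\lambda(X)$, the left-hand side vanishes and the inequality forces $\langle W,X\rangle\ge\lambda(W)^\top\lambda(X)$. Combined with Ky Fan's upper bound this gives equality, and the equality case of Lemma \ref{lem:Fan} produces the required $U\in\mathbb{O}^n(X)\cap\mathbb{O}^n(W)$. Second, for arbitrary $v\in\mathbb{R}^n$, I would plug $Y=U\,{\rm Diag}(\tilde v)\,U^\top$ into the subgradient inequality, where $\tilde v$ is the nonincreasing rearrangement of $v$. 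Symmetry gives $\phi(\lambda(Y))=\phi(v)$ while the simultaneous diagonalization yields $\langle W,Y\rangle=\lambda(W)^\top\tilde v$; the rearrangement inequality $\lambda(W)^\top\tilde v\ge\lambda(W)^\top v$ then completes the verification that $\lambda(W)\in\partial\phi(\lambda(X))$.

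For the explicit formula, the inclusion ``$\subseteq$'' follows directly from the characterization just proved by writing $W=U\,{\rm Diag}(\lambda(W))\,U^\top$ with $U\in\mathbb{O}^n(X)\cap\mathbb{O}^n(W)$ and taking $\mu=\lambda(W)$. For ``$\supseteq$'', given $\mu\in\partial\phi(\lambda(X))$ and $U\in\mathbb{O}^n(X)$, I would verify the subgradient inequality for $W=U\,{\rm Diag}(\mu)\,U^\top$ through the chain $\phi(\lambda(Y))\ge\phi({\rm diag}(U^\top Y U))\ge\phi(\lambda(X))+\langle\mu,\,{\rm diag}(U^\top Y U)-\lambda(X)\rangle=\phi(\lambda(X))+\langle W,Y-X\rangle$, where the first inequality uses Schur-convexity of symmetric convex $\phi$ together with the Schur--Horn majorization of ${\rm diag}(U^\top Y U)$ by $\lambda(Y)$, and the second equality is just the trace computation $\langle U\,{\rm Diag}(\mu)\,U^\top,Y\rangle=\mu^\top{\rm diag}(U^\top Y U)$. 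The main obstacle I anticipate is precisely the ``only if'' direction: one must simultaneously extract the diagonalization structure and the scalar subgradient information from a single inclusion $W\in\partial(\phi\circ\lambda)(X)$. The resolution is the two-stage use of Lemma \ref{lem:Fan}, first to force the Ky Fan equality case and produce $U$, and then to evaluate $\langle W,Y\rangle$ exactly along the family $Y=U\,{\rm Diag}(\tilde v)\,U^\top$ tailored to the $U$ produced in the first step.
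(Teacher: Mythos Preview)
The paper does not give its own proof of this lemma; it simply attributes the result to Lewis \cite{lewis1996,Lewis1996a} and states it without argument. Your proposal is correct and is essentially the classical proof: the ``if'' direction via Ky Fan's inequality, the ``only if'' direction by first forcing the equality case of Ky Fan (to produce a joint diagonalizer $U$) and then testing against $Y=U\,{\rm Diag}(\tilde v)\,U^\top$, and the set formula via Schur--Horn majorization combined with Schur-convexity of a symmetric convex $\phi$. One small remark: in the ``$\supseteq$'' step you are right not to invoke the ``if'' direction directly, since $\mu\in\partial\phi(\lambda(X))$ need not be sorted and hence $\lambda(W)$ need not equal $\mu$; your Schur--Horn argument handles this cleanly.
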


In the following two subsections, we characterize the  tangent sets, critical cones and the so-called sigma term associated with $\theta_1$ and $\theta_2$, respectively. Since the analysis of $\theta_1$ and $\theta_2$ are similar, we only give the detailed proof of  the results for $\theta_1$; the properties of $\theta_2$ are presented without proof.

\subsection{Variational properties of $\theta_1$}\label{subsection:theta1}

We first study the variational properties of the spectral function $\theta_1=\phi_1\circ\lambda$.
\vskip 10 true pt
\noindent
\underline{\bf The tangent set and its lineality space}. Let $\overline{X}\in\mathbb{S}^n$ be given. Since $\theta_1$ is Lipschitz continuous on $\mathbb{S}^n$, it follows  from \cite[Proposition 2.58]{BShapiro00} that the tangent cone ${\cal T}_{{\rm epi}\,\theta_1}(\overline{X},\theta_1(\overline{X}))$ of the epigraph ${\rm epi}\,\theta_1$ at $(\overline{X},\theta_1(\overline{X}))\in {\rm epi}\,\theta_1$ is given by
\[
{\cal T}_{{\rm epi}\,\theta_1}(\overline{X},\theta_1(\overline{X}))={\rm epi}\,\theta_1'(\overline{X};\,\bullet\,)=\left\{(H,h)\in \mathbb{S}^n\times\mathbb{R}\mid \theta_1'(\overline{X};H)\le h\right\}.
\]
The lineality space  of ${\cal T}_{{\rm epi}\,\theta_1}(\overline{X},\theta_1(\overline{X}))$, i.e., the largest linear subspace contained in ${\cal T}_{{\rm epi}\,\theta_1}(\overline{X},\theta_1(\overline{X}))$, is given by
\[\begin{array}{lll}
 	{\rm lin}({\cal T}_{{\rm epi}\,\theta_1}\left(\overline{X},\theta_1(\overline{X}))\right) &:= &{\cal T}_{{\rm epi}\,\theta_1}(\overline{X},\theta_1(\overline{X})) \, \cap \, \left[-{\cal T}_{{\rm epi}\,\theta_1}(\overline{X},\theta_1(\overline{X}))\right] \nonumber\\[0.1in] 
&=& \left\{(H,y)\in \mathbb{S}^n\times\mathbb{R}\mid \theta_1'(\overline{X};H)\le h \le -\theta_1'(\overline{X};-H)\right\} \nonumber \\[0.1in] 
&=& \left\{(H,y)\in \mathbb{S}^n\times\mathbb{R}\mid \theta_1'(\overline{X};H)= h = -\theta_1'(\overline{X};-H)\right\},
\end{array}
\]
where the last equality follows from \cite[Theorem 23.1]{rockafellar1970}. We consider the following linear subspace
\begin{equation}\label{eq:def-Tlin}
	{\cal T}^{\rm lin}_{\theta_1}(\overline{X}):=\left\{H\in \mathbb{S}^n\mid \theta'_1(\overline{X};H)= -\theta'_1(\overline{X};-H)\right\}.
\end{equation}
 Let $\{\alpha^l\}_{l=1}^r$ be the index sets given by \eqref{eq:ak-symmetric} with respect to $\overline{X}$. Define the index set 
\[
\widetilde{\cal E}:=\{l\in\{1,\ldots,r\}\mid \mbox{$\exists\,i,j\in \alpha^l$ such that $({\bf a}^w)_i\neq({\bf a}^w)_j$ for some $w\in \iota_1(\lambda(\overline{X}))$}\},
\]
where $\iota_1(\lambda(\overline{X}))$ is the set defined in \eqref{index1} with respect to $\lambda(\overline{X})$.
The following proposition  characterizes ${\cal T}^{\rm lin}_{\theta_1}(\overline{X})$.
\begin{proposition}\label{prop:chara-Tlin}
Let $H\in \mathbb{S}^n$. Then $H\in {\cal T}^{\rm lin}_{\theta_1}(\overline{X})$ implies the existence of scalars $\{\widehat{\rho}_{l}\}_{l\in\widetilde{\cal E}}$ such that  $\overline{U}^\top _{\alpha^{l}}H\overline{U}_{\alpha^{l}}=\widehat{\rho}_{l} \, I_{|\alpha^{l}|}$ for any $\overline{U}\in{\mathbb O}^n(\overline{X})$. In fact,
\begin{equation*}
H\in {\cal T}^{\rm lin}_{\theta_1}(\overline{X}) \;\Longleftrightarrow \; \left[\,	\langle \lambda'(\overline{X};H), {\bf a}^{i}-{\bf a}^j\rangle = 0,\quad \forall\; i,j\in \iota_1(\lambda(\overline{X}))\,\right].
\end{equation*}
%
\end{proposition}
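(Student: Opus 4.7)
The plan is to unravel the defining identity $\theta_1'(\overline{X};H) = -\theta_1'(\overline{X};-H)$ using the chain rule together with Lemma~\ref{prop:eigenvalue_diff} and formula~\eqref{eq:dirdev_12}. Writing $\mu^l_k := \lambda_k(\overline{U}_{\alpha^l}^\top H\overline{U}_{\alpha^l})$ arranged in decreasing order for each $l \in \{1,\ldots,r\}$, Lemma~\ref{prop:eigenvalue_diff} identifies the block-restrictions $\lambda'(\overline{X};H)|_{\alpha^l} = (\mu^l_1,\ldots,\mu^l_{|\alpha^l|})$ and $-\lambda'(\overline{X};-H)|_{\alpha^l} = (\mu^l_{|\alpha^l|},\ldots,\mu^l_1)$, and \eqref{eq:dirdev_12} then gives
\[
\theta_1'(\overline{X};H) = \max_{i \in \iota_1(\lambda(\overline{X}))} \langle {\bf a}^i, \lambda'(\overline{X};H)\rangle,\quad -\theta_1'(\overline{X};-H) = \min_{i \in \iota_1(\lambda(\overline{X}))} \langle {\bf a}^i, -\lambda'(\overline{X};-H)\rangle.
\]

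Next I would invoke Corollary~\ref{remark:Ix}(i): the family $\{{\bf a}^i : i \in \iota_1(\lambda(\overline{X}))\}$ is invariant under every $Q \in \mathbb{P}^n_{\lambda(\overline{X})}$, and therefore decomposes into orbits under this action. Fixing a representative ${\bf a}^{i_0}$ of an orbit and writing $\sigma^{i_0}_l$ for the nonincreasing rearrangement of ${\bf a}^{i_0}|_{\alpha^l}$, the rearrangement inequality applied block by block yields
\[
A_{i_0} := \max_{Q} \langle Q{\bf a}^{i_0}, \lambda'(\overline{X};H)\rangle = \sum_l \sum_k \sigma^{i_0}_l(k)\, \mu^l_k,
\]
\[
B_{i_0} := \min_{Q} \langle Q{\bf a}^{i_0}, -\lambda'(\overline{X};-H)\rangle = \sum_l \sum_k \sigma^{i_0}_l(k)\, \mu^l_{|\alpha^l|-k+1},
\]
so that $\theta_1'(\overline{X};H) = \max_{\text{orbit}} A_{i_0}$ and $-\theta_1'(\overline{X};-H) = \min_{\text{orbit}} B_{i_0}$. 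Pairing the indices $k$ and $|\alpha^l|-k+1$ in $A_{i_0}-B_{i_0}$ gives $A_{i_0} \ge B_{i_0}$ with equality iff, for every $l$, either $\sigma^{i_0}_l$ is constant or $\mu^l_1 = \cdots = \mu^l_{|\alpha^l|}$.

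The proposition then follows from two observations. First, under $H \in {\cal T}^{\rm lin}_{\theta_1}(\overline{X})$ (i.e., $\max_{\text{orbit}} A_{i_0} = \min_{\text{orbit}} B_{i_0}$), the inequality $A_{i_0} \ge B_{i_0}$ forces $A_{i_0} = B_{i_0}$ for every orbit \emph{and} $A_{i_0}$ to be constant across orbits. The equality part, applied to any $l \in \widetilde{\cal E}$ (for which by definition some orbit has nonconstant $\sigma^{i_0}_l$), produces a scalar $\widehat{\rho}_l$ with $\overline{U}_{\alpha^l}^\top H\overline{U}_{\alpha^l} = \widehat{\rho}_l I_{|\alpha^l|}$; the independence of $\overline{U} \in \mathbb{O}^n(\overline{X})$ follows since any two such $\overline{U}$ differ within each eigenspace $\alpha^l$ by an orthogonal transformation, which preserves scalar matrices. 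The orbit-constancy of $A_{i_0}$, combined with the just-established scalar structure on blocks in $\widetilde{\cal E}$ and the block-constancy of ${\bf a}^i|_{\alpha^l}$ for $l\notin\widetilde{\cal E}$, yields $\langle \lambda'(\overline{X};H), {\bf a}^i\rangle = A_{[i]}$ and hence $\langle \lambda'(\overline{X};H), {\bf a}^i-{\bf a}^j\rangle = 0$ for all $i,j \in \iota_1$. Conversely, given the inner-product condition, Corollary~\ref{remark:Ix}(i) forces the map $Q_l \mapsto \langle \lambda'(\overline{X};H)|_{\alpha^l}, Q_l\,{\bf a}^i|_{\alpha^l}\rangle$ to be constant over all permutations of $\alpha^l$; the elementary fact that $\sum_k x_k y_{\sigma(k)}$ being constant over all permutations $\sigma$ forces $x$ or $y$ to be constant then yields $\lambda'(\overline{X};H)|_{\alpha^l}$ constant for every $l \in \widetilde{\cal E}$, and a direct computation on the resulting block-identity structure verifies $\theta_1'(\overline{X};H)+\theta_1'(\overline{X};-H)=0$.

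The main technical obstacle is maintaining the two-level combinatorial structure cleanly---outer optimization over $\iota_1$-orbits and inner rearrangement within each block $\alpha^l$---and isolating the contribution of the active blocks $l \in \widetilde{\cal E}$ from that of the trivial blocks $l \notin \widetilde{\cal E}$ (whose contribution to $A_{i_0}$ and $B_{i_0}$ is identical by block-constancy of the relevant ${\bf a}^i|_{\alpha^l}$), so that the constraint localizes precisely as stated.
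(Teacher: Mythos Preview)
Your argument is correct, but it takes a genuinely different route from the paper's. The paper exploits a single observation that collapses the whole problem: since $\lambda'(\overline{X};-H)|_{\alpha^l}$ and $-\lambda'(\overline{X};H)|_{\alpha^l}$ differ only by reversing order within each block, and since $\phi_1$ is symmetric, one has $\phi_1'(\lambda(\overline{X});\lambda'(\overline{X};-H))=\phi_1'(\lambda(\overline{X});-\lambda'(\overline{X};H))$. This turns the defining equality $\theta_1'(\overline{X};H)=-\theta_1'(\overline{X};-H)$ into
\[
\sup_{{\bf z}\in\partial\phi_1(\lambda(\overline{X}))}\langle {\bf z},\lambda'(\overline{X};H)\rangle \;=\; \inf_{{\bf z}\in\partial\phi_1(\lambda(\overline{X}))}\langle {\bf z},\lambda'(\overline{X};H)\rangle,
\]
i.e., constancy of the linear functional $\langle\,\cdot\,,\lambda'(\overline{X};H)\rangle$ over $\partial\phi_1(\lambda(\overline{X}))={\rm conv}\{{\bf a}^i:i\in\iota_1\}$. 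The biconditional with the inner-product condition is then immediate, and the scalar-block structure for $l\in\widetilde{\cal E}$ drops out from Corollary~\ref{remark:Ix} and Lemma~\ref{prop:eigenvalue_diff} in one line.

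Your approach, by contrast, keeps the two vectors $\lambda'(\overline{X};H)$ and $-\lambda'(\overline{X};-H)$ separate and compares them via the rearrangement inequality, organized by $\mathbb{P}^n_{\lambda(\overline{X})}$-orbits. This is sound and self-contained---your pairing argument for $A_{i_0}\ge B_{i_0}$ and the characterization of equality are correct (the pair $(1,|\alpha^l|)$ alone forces one of $\sigma^{i_0}_l$, $\mu^l$ to have equal endpoints, hence to be constant by monotonicity)---but it costs you extra work in both directions. In particular, your converse has to first re-derive the scalar-block structure before it can verify $\theta_1'(\overline{X};H)+\theta_1'(\overline{X};-H)=0$, whereas in the paper's framing the converse is a triviality. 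The paper's symmetry reduction is the cleaner device here; your rearrangement argument, while longer, has the virtue of making the block-level combinatorics fully explicit.
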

\begin{proof}
Notice that for any $H\in\mathbb{S}^n$ and $U\in \mathbb{O}^n(\overline{X})$, 
\[
\left\{\begin{array}{ll}
\lambda'(\overline{X};H)=\left(\lambda(\overline{U}^\top_{\alpha^1}H\overline{U}_{\alpha^1}),\ldots,\lambda(\overline{U}^\top_{\alpha^r}H\overline{U}_{\alpha^r})\right)^\top, \\[0.1in]
\lambda'(\overline{X};-H)=\left(\lambda(-\overline{U}^\top_{\alpha^1}H\overline{U}_{\alpha^1}),\ldots,\lambda(-\overline{U}^\top_{\alpha^r}H\overline{U}_{\alpha^r})\right)^\top.
\end{array}\right.
\]
For each $1\le l\le r$, we have $\lambda(-\overline{U}^\top_{\alpha^l}H\overline{U}_{\alpha^l})=-\lambda^{\uparrow}(\overline{U}^\top_{\alpha^l}H\overline{U}_{\alpha^l})$.  Moreover,  we obtain from the symmetry of $\phi$  that for each permutation matrix $Q\in \mathbb{P}^n_{\lambda(\overline{X})}$, i.e., $Q\lambda(\overline{X})=\lambda(\overline{X})$, 
\[
\phi'(\lambda(\overline{X});h)=\phi'(\lambda(\overline{X});Qh),\quad \forall\, h\in\mathbb{R}^n.
\]
Therefore, 
$\theta'_1(\overline{X};H)=\phi'_1(\lambda(\overline{X});\lambda'(\overline{X};H)) = \displaystyle\sup_{{\bf z}\in\partial\phi_1(\lambda(\overline{X}))}\langle {\bf z},\lambda'(\overline{X};H)\rangle$  and
\begin{eqnarray*}
- \theta'_1(\overline{X};-H) = 	-\phi'_1(\lambda(\overline{X});\lambda'(\overline{X};-H))=-\phi'_1(\lambda(\overline{X});-\lambda'(\overline{X};H))\\[0.1in]
=-\sup_{{\bf z}\in\partial\phi_1(\lambda(\overline{X}))}\langle {\bf z},-\lambda'(\overline{X};H)\rangle  
	= \inf_{{\bf z}\in\partial\phi_1(\lambda(\overline{X}))}\langle {\bf z},\lambda'(\overline{X};H)\rangle.
\end{eqnarray*}
We thus derive from  \eqref{eq:def-Tlin} that $H\in {\cal T}^{\rm lin}_{\theta_1}(\overline{X})$ if and only if $\langle {\bf z},\lambda'(\overline{X};H)\rangle$ is invariant over ${\bf z}\in\partial\phi_1(\lambda(\overline{X}))$.
The claimed results then follow from \eqref{subgradient}, Lemma \ref{prop:eigenvalue_diff} and  Corollary \ref{remark:Ix}.
\qed	
\end{proof}

 \vskip 0.1in
\noindent
\underline{\bf The critical cone}.  Suppose that  $\overline{Y}\in\partial \theta_1(\overline{X})$. Then the critical cone of $\partial \theta_1(\overline{X})$ at $\overline{X} + \overline{Y}$ is 
defined as 
\begin{equation}\label{eq:def-critical-cone}
{\cal C}(\overline{X}+\overline{Y};\partial \theta_1(\overline{X})):=\left\{H\in\mathbb{S}^n\mid \theta_1'(\overline{X};H)= \langle \overline{Y},H\rangle \right\}.
\end{equation}
For each $l\in\{1,\ldots,r\}$, we further partition  the index set $\alpha^l$ into $\{\beta_k^l\}_{k=1}^{s_l}$ such that each $\beta_k^l$ contains one distinct eigenvalue of $\overline{Y}$, i.e.,
\begin{equation}\label{eq:def-beta-index}
	\left\{ \begin{array}{ll}
 	\lambda_i(\overline{Y})=\lambda_j(\overline{Y}) \epc & \mbox{if $i,j\in \beta_k^l$}, \\ [0.1in]
 	\lambda_i(\overline{Y})>\lambda_j(\overline{Y}) & \mbox{if $i\in \beta_k^l$, $j\in \beta_{k'}^{l}$ and $k, k'\in \{1,\ldots, s_l\}$ with $k<k'$}.
 \end{array}
\right.
\end{equation}
We also denote
	\begin{equation}\label{def-E-index}
	{\cal E}^l:=\{k\in\{1,\ldots,s_l\}\mid \mbox{$\exists\,i,j\in \beta^l_{k}$ such that $({\bf a}^w)_i\neq({\bf a}^w)_j$ for some $w\in \eta_1(\lambda(\overline{X}),\lambda(\overline{Y}))$}\},
	\end{equation} 
	where $\eta_1(\lambda(\overline{X}),\lambda(\overline{Y}))$ is the index set defined in \eqref{eq:def-index-eta-1} with respect to $\lambda(\overline{X})$ and $\lambda(\overline{Y})$. The characterization of the critical cone ${\cal C}(\overline{X} + \overline{Y};\partial\theta_1(\overline{X}))$ is provided in the following proposition.
		
	\begin{proposition}\label{prop:critical_cone-eq}
		Suppose that $(\overline{X},\overline{Y})\in{\rm gph}\,\partial\theta_1$ and $\overline{U}\in \mathbb{O}^n(\overline{X})\cap \mathbb{O}^n(\overline{Y})$. If $H\in {\cal C}(\overline{X} + \overline{Y};\partial\theta_1(\overline{X}))$, then the following three properties hold:
\vskip 0.1in
\noindent		
(i) for each $l\in\{1,\ldots,r\}$, $\overline{U}_{\alpha^l}^\top H\overline{U}_{\alpha^l}$ has the following block diagonal structure:
	\[
		\overline{U}_{\alpha^l}^\top H\overline{U}_{\alpha^l}={\rm Diag}\left((\overline{U}_{\alpha^l}^\top H\overline{U}_{\alpha^l})_{\beta_1^l\beta_1^l},\ldots, (\overline{U}_{\alpha^l}^\top H\overline{U}_{\alpha^l})_{\beta_{s_l}^l\beta_{s_l}^l}\right);
	\]
(ii)
$\langle\lambda'(\overline{X};H),{\bf a}^i\rangle=\langle\lambda'(\overline{X};H),{\bf a}^j\rangle=\displaystyle\max_{\kappa\in\iota_1(\lambda(\overline{X}))} \langle\lambda'(\overline{X};H),{\bf a}^\kappa\rangle,\quad \forall\,i,j\in\eta_1(\lambda(\overline{X}),\lambda(\overline{Y}))$;

\noindent
(iii) for each $l\in\{1,\ldots,r\}$ and $k\in {\cal E}^l$, there exists a scalar $\rho^l_{k}$ such that $\left(\overline{U}_{\alpha^l}^\top H\overline{U}_{\alpha^l}\right)_{\beta_{k}^l\beta_{k}^l}=\rho^l_{k} I_{|\beta_{k}^l|}$.

In fact, $H\in {\cal C}(\overline{X}+\overline{Y};\partial\theta_1(\overline{X}))$ if and only if for any $i,j\in\eta_1(\lambda(\overline{X}),\lambda(\overline{Y}))$,
		\begin{equation}\label{eq:Condition iii}
		\left\langle {\rm diag}(\overline{U}^\top  H\overline{U}), \,{\bf a}^i  \right\rangle\, =\, \left\langle  {\rm diag}(\overline{U}^\top  H\overline{U}), \,{\bf a}^j  \right\rangle \, =\, \max_{\kappa\in\iota_1(\lambda(\overline{X}))} \left\langle\lambda'(\overline{X};H),{\bf a}^\kappa\right\rangle,
		\end{equation}
		where the index sets $\eta_1$ and $\iota_1$ are defined in \eqref{eq:def-index-eta-1} and \eqref{index1}, respectively.
			\end{proposition}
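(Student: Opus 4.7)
The plan is to establish the chain of inequalities
\[
\langle \overline{Y}, H\rangle \;\le\; \langle \lambda(\overline{Y}), \lambda'(\overline{X};H)\rangle \;\le\; \theta_1'(\overline{X};H),
\]
and to interpret the two equality conditions separately. Since $(\overline{X},\overline{Y})\in {\rm gph}\,\partial\theta_1$, Lemma \ref{lemma:subdiff-spectral-function} yields $\lambda(\overline{Y})\in\partial\phi_1(\lambda(\overline{X}))$, so by \eqref{subgradient} and \eqref{eq:def-index-eta-1} we can write $\lambda(\overline{Y})=\sum_{w\in\eta_1}u_w{\bf a}^w$ with $u_w>0$ and $\sum_w u_w=1$, where $\eta_1\equiv \eta_1(\lambda(\overline{X}),\lambda(\overline{Y}))$. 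Writing $\widetilde{H}:=\overline{U}^\top H\overline{U}$ and $M^l:=\widetilde{H}_{\alpha^l\alpha^l}$, the left inequality follows from Ky Fan's inequality (Lemma \ref{lem:Fan}) applied block-by-block to each pair $({\rm Diag}(\lambda(\overline{Y})_{\alpha^l}),M^l)\in\mathbb{S}^{|\alpha^l|}\times\mathbb{S}^{|\alpha^l|}$, together with the identity $\lambda(M^l)=\lambda'(\overline{X};H)_{\alpha^l}$ coming from Lemma \ref{prop:eigenvalue_diff}. The right inequality is the standard subdifferential bound $\phi_1'(\lambda(\overline{X});d)\ge \langle g,d\rangle$ applied to $g=\lambda(\overline{Y})$ and $d=\lambda'(\overline{X};H)$. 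Membership of $H$ in ${\cal C}(\overline{X}+\overline{Y};\partial\theta_1(\overline{X}))$ is therefore equivalent to equality in both bounds, and the three conclusions (i)--(iii) will be read off from these two equality conditions.

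Equality in Ky Fan on each $\alpha^l$-block supplies an orthogonal $Q^l\in\mathbb{O}^{|\alpha^l|}$ that simultaneously ordered-eigendecomposes ${\rm Diag}(\lambda(\overline{Y})_{\alpha^l})$ and $M^l$. Because ${\rm Diag}(\lambda(\overline{Y})_{\alpha^l})$ is already in sorted diagonal form with the scalar blocks $\mu_k^l I_{|\beta_k^l|}$, $Q^l$ must commute with it and hence be block diagonal along $\{\beta_k^l\}_k$. This forces $M^l$ itself to be block diagonal along $\{\beta_k^l\}_k$, yielding (i); it also exhibits each $\widetilde{H}_{\beta_k^l\beta_k^l}$ as orthogonally similar (via the corresponding diagonal sub-block of $Q^l$) to the diagonal matrix with entries $\lambda'(\overline{X};H)_{\beta_k^l}$. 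On the other hand, equality in the subdifferential bound reads $\sum_{w\in\eta_1}u_w\langle{\bf a}^w,\lambda'(\overline{X};H)\rangle=\max_{\kappa\in\iota_1(\lambda(\overline{X}))}\langle{\bf a}^\kappa,\lambda'(\overline{X};H)\rangle$, and strict positivity of each $u_w$ forces every summand to attain the maximum, which is precisely (ii).

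The most delicate step is (iii), and this is where the symmetry of $\phi$ enters through Corollary \ref{remark:Ix}(ii). Fix $k\in{\cal E}^l$ and pick $w\in\eta_1$ such that $({\bf a}^w)_i\neq({\bf a}^w)_j$ for some $i,j\in\beta_k^l$. For every permutation $\pi$ of $\{1,\ldots,n\}$ acting as the identity outside $\beta_k^l$, the associated matrix $Q_\pi\in\mathbb{P}^n$ lies in $\mathbb{P}^n_{\lambda(\overline{X})}\cap\mathbb{P}^n_{\lambda(\overline{Y})}$, since $\lambda(\overline{X})$ is constant on $\alpha^l\supseteq\beta_k^l$ and $\lambda(\overline{Y})$ is constant on $\beta_k^l$. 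Corollary \ref{remark:Ix}(ii) then guarantees that $Q_\pi{\bf a}^w$ coincides with some ${\bf a}^{w'}$ with $w'\in\eta_1$, and (ii) delivers $\langle Q_\pi{\bf a}^w,\lambda'(\overline{X};H)\rangle=\langle{\bf a}^w,\lambda'(\overline{X};H)\rangle$; equivalently, $\langle ({\bf a}^w)_{\beta_k^l},\lambda'(\overline{X};H)_{\beta_k^l}\rangle$ is invariant under all permutations of the entries of $({\bf a}^w)_{\beta_k^l}$. Testing with the transposition swapping two positions of $({\bf a}^w)_{\beta_k^l}$ with unequal values, and iterating through pairs, forces $\lambda'(\overline{X};H)_{\beta_k^l}$ to be a constant vector, whose common value I denote $\rho_k^l$. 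Because $\widetilde{H}_{\beta_k^l\beta_k^l}$ is symmetric with every eigenvalue equal to $\rho_k^l$, it must equal $\rho_k^l I_{|\beta_k^l|}$, giving (iii).

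For the closing characterization \eqref{eq:Condition iii}, the forward direction is a block-wise computation that, under (i) and (iii), verifies $\langle {\bf a}^i,{\rm diag}(\widetilde{H})\rangle=\langle{\bf a}^i,\lambda'(\overline{X};H)\rangle$ for every $i\in\eta_1$: on blocks with $k\in{\cal E}^l$ both $\widetilde{H}_{\beta_k^l\beta_k^l}$ and $\lambda'(\overline{X};H)_{\beta_k^l}$ are scalar $\rho_k^l$; on blocks with $k\notin{\cal E}^l$ the vector $({\bf a}^i)_{\beta_k^l}$ is constant by the very definition of ${\cal E}^l$, and (i) supplies the trace identity $\sum_{t\in\beta_k^l}\widetilde{H}_{tt}=\sum_{t\in\beta_k^l}\lambda'(\overline{X};H)_t$. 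Combining with (ii) yields \eqref{eq:Condition iii}. Conversely, assuming \eqref{eq:Condition iii}, expanding $\langle\overline{Y},H\rangle=\sum_{w\in\eta_1}u_w\langle{\bf a}^w,{\rm diag}(\widetilde{H})\rangle$ and substituting the common value $\max_{\kappa\in\iota_1(\lambda(\overline{X}))}\langle{\bf a}^\kappa,\lambda'(\overline{X};H)\rangle=\theta_1'(\overline{X};H)$ gives $\langle\overline{Y},H\rangle=\theta_1'(\overline{X};H)$, placing $H$ in ${\cal C}(\overline{X}+\overline{Y};\partial\theta_1(\overline{X}))$.
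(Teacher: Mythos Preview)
Your proof is correct and follows essentially the same approach as the paper: the same Ky Fan chain of inequalities, the same block-diagonal conclusion from the equality case, the same subgradient maximization for (ii), and the same permutation/transposition argument via Corollary~\ref{remark:Ix}(ii) for (iii). The forward and converse directions of \eqref{eq:Condition iii} are also handled just as in the paper, via the trace identity on blocks with $k\notin{\cal E}^l$ and the scalar structure on blocks with $k\in{\cal E}^l$.
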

	\begin{proof}
		It follows from  Ky Fan's inequality in Lemma \ref{lem:Fan} that for any $H\in \mathbb{S}^n$,
\begin{eqnarray}
	\langle \overline{Y},H\rangle &=&\langle \Lambda(\overline{Y}), \overline{U}^\top H\overline{U}\rangle = \sum_{l=1}^r \langle \Lambda(\overline{Y})_{\alpha^l\alpha^l}\, , \,\overline{U}_{\alpha^l}^\top H\overline{U}_{\alpha^l} \rangle \nonumber \\[3pt] 
	&\le &  \sum_{l=1}^r\langle \lambda(\overline{Y})_{\alpha^l}\, , \,\lambda(\overline{U}^\top _{\alpha^l}H\overline{U}_{\alpha^l})\rangle =\langle \lambda(\overline{Y})\,,\,\lambda'(\overline{X};H)\rangle\label{eq:cir-c-ineq1}  \\ [3pt]
	&\le & \sup_{{\bf z}\in\partial\phi_1(\lambda(\overline{X}))}\left\langle {\bf z},\lambda'(\overline{X};H)\right\rangle=\theta_1'(\overline{X};H).\label{eq:cir-c-ineq2}
\end{eqnarray}
Therefore, in order for $H\in {\cal C}(\overline{X} + \overline{Y};\partial\theta_1(\overline{X}))$, the equalities in \eqref{eq:cir-c-ineq1} and \eqref{eq:cir-c-ineq2} must hold. 

Consider the inequality in \eqref{eq:cir-c-ineq1}. 
For each $l\in\{1,\ldots,r\}$, we know from Ky Fan's inequality that the equality in \eqref{eq:cir-c-ineq1} holds if and only if there exists $R^l\in\mathbb{O}^{|\alpha^l|}$ such that
\[
\Lambda(\overline{Y})_{\alpha^l\alpha^l}=R^l\Lambda(\overline{Y})_{\alpha^l\alpha^l}(R^l)^\top \quad {\rm and}\quad \overline{U}_{\alpha^l}^\top H\overline{U}_{\alpha^l} = R^l\Lambda(\overline{U}_{\alpha^l}^\top H\overline{U}_{\alpha^l})(R^l)^\top ,
\]
where $R^l$ has the following block diagonal structure:
\[
R^l={\rm Diag}\left(R^l_{1},\ldots,R^l_{s_l}\right)\quad {\rm with} \quad R^l_{k}\in\mathbb{O}^{|\beta_k^l|},\quad k=1,\ldots,s_l.
\]
Here $\{\beta_k^l\}_{k=1}^{s_l}$ is defined in \eqref{eq:def-beta-index}. The statement (i) thus follows.

In order for the equality  in \eqref{eq:cir-c-ineq2} holds, we must have $\langle \lambda(\overline{Y}),\lambda'(\overline{X};H)\rangle=\displaystyle{\sup_{{\bf z}\in\partial\phi_1(\lambda(\overline{X}))}}\langle {\bf z},\lambda'(\overline{X};H)\rangle$, which implies the statement (ii).

On the other hand, for each $l\in\{1,\ldots,r\}$, if $k\in {\cal E}^l$, then there exist $i,j\in \beta^l_{k}$ such that $({\bf a}^w)_i\neq({\bf a}^w)_j$ for some $w\in \eta_1(\lambda(\overline{X}),\lambda(\overline{Y}))$. Consider the $n\times n$ permutation matrix $Q^{i,j}$  satisfying
	\[
	(Q^{i,j}{\bf a}^w)_z=\left\{\begin{array}{ll}
	({\bf a}^w)_j & \mbox{if $z=i$,} \\ [3pt]
	({\bf a}^w)_i & \mbox{if $z=j$,} \\ [3pt]
	({\bf a}^w)_z & \mbox{otherwise,}
	\end{array} \right. \quad z=1,\ldots,n.
	\]  
	Since $\lambda_i(\overline{X})=\lambda_j(\overline{X})$ and $\lambda_i(\overline{Y})=\lambda_j(\overline{Y})$, it is clear that $Q^{i,j}\lambda(\overline{X})=\lambda(\overline{X})$ and $Q^{i,j}\lambda(\overline{Y})=\lambda(\overline{Y})$. It then follows from Corollary \ref{remark:Ix} that
	 there exists $w'\in \eta_1(\lambda(\overline{X}),\lambda(\overline{Y}))$ such that ${\bf a}^{w'}=Q^{i,j}{\bf a}^w$. Therefore, we derive from (ii) that
	\[
	\langle\lambda'(\overline{X};H),{\bf a}^w-{\bf a}^{w'}\rangle=(\lambda'_i(\overline{X};H)-\lambda'_j(\overline{X};H))(({\bf a}^w)_i-({\bf a}^{w})_j)=0,
	\] 
	which implies that 
	$$
	\lambda'_i(\overline{X};H)=\lambda'_j(\overline{X};H).
	$$ 
	For any $i'\in \beta^l_{k}$ with $i'\neq i$ and $i'\neq j$, if $({\bf a}^w)_{i'}\neq({\bf a}^w)_i$, by replacing $i$ by $i'$ and $j$ by $i$ in the above argument, we obtain that 
	$$
	\lambda'_{i'}(\overline{X};H)=\lambda'_{i}(\overline{X};H)=\lambda'_j(\overline{X};H);
	$$
	otherwise if $({\bf a}^w)_{i'}=({\bf a}^w)_i$,  then by replacing $i$ by $i'$ in the above argument, we can also obtain the above equality.
	Consequently, we know that for any $k\in {\cal E}^l$, there exists some $\rho^l_{k}\in\mathbb{R}$ such that for any $i\in \beta^l_{k}$,
	\[
	\lambda'_{i}(\overline{X};H)=\rho^l_{k},
	\]
	which, together with Lemma \ref{prop:eigenvalue_diff}, shows the property (ii).

To establish the last statement of this proposition, we observe that
for each $l\in\{1,\ldots,r\}$, if $k\notin {\cal E}^l$, then for any $w\in \eta_1(\lambda(\overline{X}),\lambda(\overline{Y}))$, there exists a scalar $\tilde{\rho}^l_{k}$ such that 
	\[
	({\bf a}^w)_i=({\bf a}^w)_j=\tilde{\rho}^l_{k}, \quad \forall\,i,j\in \beta^l_{k},
\]
which yields
	\begin{eqnarray*}
	&&\langle \lambda'(\overline{X},H),{\bf a}^w\rangle = \sum_{l=1}^r \Big( \sum_{k\in {\cal E}^l}\big\langle \lambda'(\overline{X},H)_{\beta_{k}^l},({\bf a}^w)_{\beta_{k}^l}\big\rangle +  \sum_{k\notin {\cal E}^l}\big\langle \lambda'(\overline{X},H)_{\beta_{k}^l},({\bf a}^w)_{\beta_{k}^l}\big\rangle \Big) \nonumber \\ [3pt]
	&=& \sum_{l=1}^r \Big( \sum_{k\in {\cal E}^l}\big\langle \rho^l_{k} {\bf e}_{|\beta_{k}^l|},({\bf a}^w)_{\beta_{k}^l}\big\rangle +  \sum_{k\notin {\cal E}^l}\big\langle \lambda((\overline{U}_{\alpha^l}^\top H\overline{U}_{\alpha^l})_{\beta_{k}^l\beta_{k}^l}), \, \tilde{\rho}^l_{k} {\bf e}_{|\beta_{k}^l|}\big\rangle \Big) \nonumber \\ [3pt]
	&=& \sum_{l=1}^r \Big( \sum_{k\in {\cal E}^l}\big\langle \rho^l_{k} {\bf e}_{|\beta_{k}^l|},({\bf a}^w)_{\beta_{k}^l}\big\rangle +  \sum_{k\notin {\cal E}^l} \tilde{\rho}^l_{k}\,{\rm trace}\big((\overline{U}_{\alpha^l}^\top H\overline{U}_{\alpha^l})_{\beta_{k}^l\beta_{k}^l}\big)\Big) \nonumber  \\ [3pt]
	&=& \sum_{l=1}^r \Big( \sum_{k\in {\cal E}^l}\big\langle {\rm diag}\big((\overline{U}_{\alpha^l}^\top H\overline{U}_{\alpha^l})_{\beta_{k}^l\beta_{k}^l}\big), \, ({\bf a}^w)_{\beta_{k}^l}\big\rangle +  \sum_{k\notin {\cal E}^l} \big\langle {\rm diag}\big((\overline{U}_{\alpha^l}^\top H\overline{U}_{\alpha^l})_{\beta_{k}^l\beta_{k}^l}\big),\,\tilde{\rho}^l_{k}\, {\bf e}_{|\beta_{k}^l|}\big\rangle\Big) \nonumber \\ [3pt]
	&=& \Big\langle {\rm diag}(\overline{U}^\top H\overline{U}), {\bf a}^w  \Big\rangle. \label{eq:dir-diag-1}
	\end{eqnarray*}
	Conversely, suppose that $H\in\mathbb{S}^n$ satisfies \eqref{eq:Condition iii}.  We have 
	\[
	\langle \overline{Y},H\rangle=\langle \Lambda(\overline{Y}), \overline{U}^\top H\overline{U}\rangle=  \langle \lambda(\overline{Y}), {\rm diag}(\overline{U}^\top H\overline{U})\rangle=\sum_{i\in \eta_1(\lambda(\overline{X}),\lambda(\overline{Y}))} u_i\langle {\bf a}^i,{\rm diag}(\overline{U}^\top H\overline{U})\rangle = \theta_1'(\overline{X};H),
	\] 
	which shows that $H\in  {\cal C}(\overline{X}+\overline{Y};\partial\theta_1(\overline{X}))$.
 The proof of this proposition is thus completed. \qed
	\end{proof}

Based on the above proposition, we can further characterize the 
 affine hull of ${\cal C}(\overline{X} + \overline{Y};\partial\theta_1(\overline{X}))$, which we denoted as ${\rm aff}\,({\cal C}(A;\partial\theta_1(\overline{X})))$. The proof  can be directly obtained from Proposition \ref{prop:critical_cone-eq}. For simplicity, we omit the details here.

\begin{proposition}\label{prop:critical_cone_aff}
	Suppose that $(\overline{X},\overline{Y})\in{\rm gph}\,\partial\theta_1$ and $\overline{U}\in \mathbb{O}^n(\overline{X})\cap \mathbb{O}^n(\overline{Y})$.  Then $H\in {\rm aff}\,({\cal C}(A;\partial\theta_1(\overline{X})))$ if and only if 
	it satisfies the the properties (i) and (iii) in Proposition \ref{prop:critical_cone-eq}, and
\begin{equation}\label{eq:Condition iii-aff}
\langle {\rm diag}(\overline{U}^\top H\overline{U}), {\bf a}^i  \rangle=\langle  {\rm diag}(\overline{U}^\top H\overline{U}), {\bf a}^j  \rangle, \quad \forall\; i,j\in\eta_1(\lambda(\overline{X}),\lambda(\overline{Y})).
\end{equation} 

\end{proposition}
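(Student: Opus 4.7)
The plan is to show that ${\rm aff}({\cal C}) = L$, where ${\cal C} := {\cal C}(\overline{X}+\overline{Y}; \partial\theta_1(\overline{X}))$ and $L$ denotes the set of $H\in\mathbb{S}^n$ satisfying conditions (i), (iii) from Proposition~\ref{prop:critical_cone-eq} together with \eqref{eq:Condition iii-aff}. Two basic observations are made at the outset: ${\cal C}$ is a convex cone through the origin, being the zero set of the nonnegative sublinear functional $\theta_1'(\overline{X};\cdot)-\langle\overline{Y},\cdot\rangle$, so ${\rm aff}({\cal C})={\cal C}-{\cal C}$; and each of (i), (iii), \eqref{eq:Condition iii-aff} is linear in $H$, so $L$ is a linear subspace.

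The forward inclusion ${\cal C}-{\cal C}\subseteq L$ follows immediately from Proposition~\ref{prop:critical_cone-eq}: any $H_m\in{\cal C}$ satisfies (i), (iii) together with the pairwise equality $\langle{\rm diag}(\overline{U}^\top H_m\overline{U}),\,{\bf a}^i-{\bf a}^j\rangle=0$ for $i,j\in\eta_1$ (the first half of \eqref{eq:Condition iii}), and these three linear relations are preserved under the subtraction $H_1-H_2$.

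For the reverse inclusion $L\subseteq{\cal C}-{\cal C}$, I would, given $H\in L$, produce $\bar{H}\in{\cal C}$ enjoying the strict inequalities $\langle\lambda'(\overline{X};\bar{H}),{\bf a}^\kappa\rangle<\langle\lambda'(\overline{X};\bar{H}),{\bf a}^i\rangle$ for all $\kappa\in\iota_1\setminus\eta_1$ and $i\in\eta_1$. Then $\bar{H}+tH$ lies in the linear subspace $L$, and the identity $\langle\lambda'(\overline{X};\bar{H}+tH),{\bf a}^w\rangle=\langle{\rm diag}(\overline{U}^\top(\bar{H}+tH)\overline{U}),{\bf a}^w\rangle$ for $w\in\eta_1$ established in the proof of Proposition~\ref{prop:critical_cone-eq} is linear in $t$, while $\langle\lambda'(\overline{X};\bar{H}+tH),{\bf a}^\kappa\rangle$ is continuous in $t$ for $\kappa\in\iota_1\setminus\eta_1$. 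The strict inequalities at $\bar{H}$ therefore persist for all sufficiently small $t>0$, so \eqref{eq:Condition iii} holds for $\bar{H}+tH$ and places it in ${\cal C}$; the identity $H=t^{-1}\bigl((\bar{H}+tH)-\bar{H}\bigr)$ then realizes $H$ as an element of ${\cal C}-{\cal C}$.

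The main obstacle is the construction of $\bar{H}$. I would take $\bar{H}=\overline{U}\,{\rm Diag}(\bar{d})\,\overline{U}^\top$ with $\bar{d}\in\mathbb{R}^n$ chosen to be constant on each block $\beta_k^l$, strictly decreasing in $k$ within each $\alpha^l$ (so that $\bar{d}_{\alpha^l}$ is already sorted and Lemma~\ref{prop:eigenvalue_diff} yields $\lambda'(\overline{X};\bar{H})=\bar{d}$), and lying in the relative interior of the polyhedral critical cone of $\phi_1$ at $\lambda(\overline{X})$ with respect to $\lambda(\overline{Y})$. The constancy on $\beta_k^l$ makes $\bar{H}$ automatically satisfy (i) and (iii), while the relative-interior property delivers the required strict inequalities. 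Existence of such a $\bar{d}$ can be secured by starting with any point in the relative interior of the polyhedral critical cone (nonempty by classical convex analysis), symmetrizing across the stabilizer subgroup $\mathbb{P}^n_{\lambda(\overline{X})}\cap\mathbb{P}^n_{\lambda(\overline{Y})}$ (invariance granted by Corollary~\ref{remark:Ix}(ii)) to enforce constancy on each orbit $\beta_k^l$, and finally adjusting by a recession-cone element that enforces strict decrease across blocks; this last adjustment, which must preserve both the relative-interior property and the $G$-invariance of $\bar{d}$, is the subtlest technical step.
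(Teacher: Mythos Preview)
The paper does not prove this proposition; it only remarks that the proof ``can be directly obtained from Proposition~\ref{prop:critical_cone-eq}'' and omits all details. Your proposal therefore supplies what the paper leaves out, and the overall strategy---forward inclusion from the linearity of (i), (iii), \eqref{eq:Condition iii-aff}, reverse inclusion via a relative-interior perturbation $\bar H+tH$---is the natural one and is sound.

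Two points deserve sharpening. First, the identity $\langle\lambda'(\overline{X};\bar{H}+tH),{\bf a}^w\rangle=\langle{\rm diag}(\overline{U}^\top(\bar{H}+tH)\overline{U}),{\bf a}^w\rangle$ for $w\in\eta_1$ is derived in the proof of Proposition~\ref{prop:critical_cone-eq} only for $H\in{\cal C}$, using the eigenvalue equalities that come from~(ii); it is \emph{not} a formal consequence of (i), (iii), \eqref{eq:Condition iii-aff} alone, because $\lambda'(\overline{X};\cdot)_{\alpha^l}$ sorts eigenvalues and need not respect the $\{\beta_k^l\}$ partition. Your strict-decrease hypothesis on $\bar d$ is exactly what rescues this: for small $t$ the eigenvalues of $\overline{U}_{\alpha^l}^\top(\bar{H}+tH)\overline{U}_{\alpha^l}$ coming from block $k$ stay strictly above those from block $k+1$, so the sorting respects the blocks and the identity follows. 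You should argue this directly rather than cite the identity as given.

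Second, on the step you rightly flag as subtlest: a useful fact is that any $G'$-invariant $d\in C_\phi$ (the polyhedral critical cone of $\phi_1$) is already \emph{non-increasing} across the $\beta_k^l$. For $i\in\beta_k^l$, $j\in\beta_{k+1}^l$ the transposition $Q^{i,j}\in\mathbb{P}^n_{\lambda(\overline{X})}$ sends each ${\bf a}^w$, $w\in\eta_1$, to some ${\bf a}^{w'}$, $w'\in\iota_1$, by Corollary~\ref{remark:Ix}(i), so $\langle{\bf a}^w-{\bf a}^{w'},d\rangle=(({\bf a}^w)_i-({\bf a}^w)_j)(d_i-d_j)\ge0$; averaging against the weights $u_w>0$ yields $(\lambda_i(\overline{Y})-\lambda_j(\overline{Y}))(d_i-d_j)\ge0$, hence $d_i\ge d_j$. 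The upgrade to \emph{strict} decrease, however, still requires exhibiting a direction in ${\rm aff}(C_\phi)$ that is $G'$-invariant and strictly decreasing across blocks; this existence is not automatic from your recession-cone sketch and should be argued explicitly.
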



\vskip 10 true pt
\noindent
\underline{\bf The sigma term}.  Suppose that $\overline{Y}\in\,\partial\theta_1(\overline{X})$. Let $H\in{\cal C}(\overline{X}+\overline{Y},\partial\theta_1(\overline{X}))$ be arbitrarily given.  Since $\phi_1$ is Lipschitz continuous,  we know from \cite[Lemma 3.1]{BZowe82} that $\theta_1$ is (parabolic) second-order directionally differentiable with the second-order directional derivative
\begin{equation}\label{eq:def-sec-dir-diff}
\digamma_{\overline{X}, H}(W):=\theta_1''(\overline{X};H,W)=\phi_1''(\lambda(\overline{X});\lambda'(\overline{X};H),\lambda''(\overline{X};H,W)), \quad W\in\mathbb{S}^n.
\end{equation}
Moreover, it is easy to see that $\digamma_{\overline{X}, H}:\mathbb{S}^n\to \mathbb{R}$ is convex. We define the sigma term associated with the spectral function $\theta_1=\phi_1\circ\lambda$ at $\overline{Y}\in\partial \theta_1(\overline{X})$ as the conjugate function (cf. \cite{rockafellar1970} for the definition) of $\digamma_{\overline{X}, H}$ at $\overline{Y}$, that is, we consider the function
\[
	\digamma_{\overline{X}, H}^*(\overline{Y})=\sup_{W\in\mathbb{S}^n }\left\{\langle W,\overline{Y}\rangle-\digamma_{\overline{X}, H}(W)  \right\}.
\]
The proposition below characterizes the property of  $\digamma^*_{\overline{X}, H}$.

\begin{proposition}\label{prop:sigma-term-1}
	Suppose that $(\overline{X},\overline{Y})\in{\rm gph}\,\partial\theta_1$ and $\overline{U}\in \mathbb{O}^n(\overline{X})\cap \mathbb{O}^n(\overline{Y})$.   Denote $\bar{v}_1>\bar{v}_2>\cdots>\bar{v}_r$ as the distinct eigenvalues of $\overline{X}$. Let $H\in{\cal C}(\overline{X}+\overline{Y},\partial\theta_1(\overline{X}))$ be given. Then
		\begin{equation}\label{eq:sigma1}
		\digamma_{\overline{X}, H}^*(\overline{Y})=2\sum_{l=1}^r\langle \Lambda(\overline{Y})_{\alpha^l\alpha^l} \, , \, \overline{U}^\top _{\alpha^l}H(\overline{X}-\bar{v}_lI)^{\dagger}H\overline{U}_{\alpha^l}\rangle, 
\end{equation}
	where for each $l\in\{1,\ldots,r\}$, $(\overline{X}-\bar{v}_lI)^{\dagger}$ is the Moore-Penrose pseudo-inverse of $\overline{X}-\bar{v}_lI$.
\end{proposition}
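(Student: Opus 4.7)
The plan is to compute the conjugate $\digamma^*_{\overline{X},H}(\overline{Y})=\sup_W\{\langle W,\overline{Y}\rangle-\digamma_{\overline{X},H}(W)\}$ through matching bounds. First I would parametrize $V:=\overline{U}^\top W\overline{U}$ and split each diagonal block as $V_{\alpha^l\alpha^l}=\widehat V^l+M^l$, where $M^l:=2\overline{U}_{\alpha^l}^\top H(\overline{X}-\bar v_lI)^\dagger H\overline{U}_{\alpha^l}$. Since $\overline{U}^\top\overline{Y}\overline{U}=\Lambda(\overline{Y})$, this yields the decomposition
\[
\langle W,\overline{Y}\rangle=\sum_{l=1}^{r}\langle \Lambda(\overline{Y})_{\alpha^l\alpha^l},M^l\rangle+\sum_{l=1}^{r}\langle \Lambda(\overline{Y})_{\alpha^l\alpha^l},\widehat V^l\rangle,
\]
whose first sum is precisely the right-hand side of \eqref{eq:sigma1}. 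The claim thus reduces to showing $\sup_W\{\sum_{l}\langle \Lambda(\overline{Y})_{\alpha^l\alpha^l},\widehat V^l\rangle-\digamma_{\overline{X},H}(W)\}=0$.

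The ``$\ge 0$'' direction is attained at the test direction $W^\star:=\sum_l\overline{U}_{\alpha^l}M^l\overline{U}_{\alpha^l}^\top$, for which $\widehat V^l=0$ for each $l$. Lemma \ref{prop:second-directional-diff-eigenvalue} then forces $\lambda''(\overline{X};H,W^\star)=0$, and hence $\digamma_{\overline{X},H}(W^\star)=\phi_1''(\lambda(\overline{X});\lambda'(\overline{X};H),0)=0$, achieving the claimed value.

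For the reverse direction I would lower-bound $\digamma_{\overline{X},H}(W)$ for arbitrary $W$. Since $\phi_1$ is a pointwise max of affine functions, its parabolic second-order derivative is $\phi_1''(\lambda(\overline{X});\lambda'(\overline{X};H),u)=\max\{\langle {\bf a}^j,u\rangle\mid j\in\iota_1(\lambda(\overline{X})),\,\langle {\bf a}^j,\lambda'(\overline{X};H)\rangle=\theta_1'(\overline{X};H)\}$. By Proposition \ref{prop:critical_cone-eq}(ii) every $j\in\eta_1(\lambda(\overline{X}),\lambda(\overline{Y}))$ belongs to this argmax set, so if $\lambda(\overline{Y})=\sum_{j\in\eta_1}u_j{\bf a}^j$ with $u_j>0$ and $\sum u_j=1$, a convex combination produces $\digamma_{\overline{X},H}(W)\ge\langle \lambda(\overline{Y}),\lambda''(\overline{X};H,W)\rangle$. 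Lemma \ref{prop:second-directional-diff-eigenvalue} rewrites the right-hand side block-wise as $\sum_l\langle \lambda_{\alpha^l}(\overline{Y}),\lambda'(A^l;\widehat V^l)\rangle$ with $A^l:=\overline{U}_{\alpha^l}^\top H\overline{U}_{\alpha^l}$, so the upper bound reduces to the key inequality
\[
\sum_{l}\langle \Lambda(\overline{Y})_{\alpha^l\alpha^l},\widehat V^l\rangle\le\sum_l\langle \lambda_{\alpha^l}(\overline{Y}),\lambda'(A^l;\widehat V^l)\rangle.
\]

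The main technical obstacle is establishing this last inequality. By Proposition \ref{prop:critical_cone-eq}(i) and (iii), $A^l$ is block diagonal with respect to $\{\beta^l_k\}_{k=1}^{s_l}$. Since both sides of the inequality are invariant under the replacement $\overline{U}_{\alpha^l}\to\overline{U}_{\alpha^l}O^l$ by any block-diagonal orthogonal $O^l$ respecting $\{\beta^l_k\}$ (such an $O^l$ commutes with the block-constant diagonal $\Lambda(\overline{Y})_{\alpha^l\alpha^l}$, so $\overline{U}$ stays in $\mathbb{O}^n(\overline{X})\cap\mathbb{O}^n(\overline{Y})$), I may assume without loss of generality that $A^l$ is itself diagonal. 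Then the partition $\{\tilde{\alpha}^{\tilde{l}}\}$ groups local indices sharing a common diagonal value of $A^l$, and $\lambda'_k(A^l;\widehat V^l)$ for $k\in\tilde{\alpha}^{\tilde{l}}$ produces the decreasingly-sorted eigenvalues of the principal submatrix $\widehat V^l_{\tilde{\alpha}^{\tilde{l}}\tilde{\alpha}^{\tilde{l}}}$. Because the weights $\lambda_{i_k}(\overline{Y})$ are non-increasing as $k$ traverses $\tilde{\alpha}^{\tilde{l}}$ in increasing order (inherited from the global non-increasing ordering of $\lambda(\overline{Y})$), I combine the Schur-Horn majorization (the diagonal of a symmetric matrix is majorized by its eigenvalues) with the rearrangement inequality applied to non-increasing weights to obtain the block-wise bound; summing over $\tilde{l}$ and $l$ then completes the proof.
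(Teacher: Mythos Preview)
Your overall strategy coincides with the paper's: both decompose $\langle W,\overline{Y}\rangle$ into the claimed value plus a residual, exhibit the same test direction $W^\star$ to get the lower bound, and control the residual from above via the convex-combination estimate $\digamma_{\overline{X},H}(W)\ge\langle\lambda(\overline{Y}),\lambda''(\overline{X};H,W)\rangle$ (using $\eta_1\subseteq\xi_1$, i.e.\ Proposition~\ref{prop:critical_cone-eq}(ii)). The only divergence is in how the remaining block-wise inequality
\[
\langle \Lambda(\overline{Y})_{\alpha^l\alpha^l},\widehat V^l\rangle\ \le\ \sum_{i\in\alpha^l}\lambda_i(\overline{Y})\,\lambda''_i(\overline{X};H,W)
\]
is justified. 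The paper handles it in one stroke with Ky Fan's inequality (Lemma~\ref{lem:Fan}), after noting that the diagonalizer $R^l$ of $A^l$ commutes with $\Lambda(\overline{Y})_{\alpha^l\alpha^l}$; that commutation is exactly the Ky Fan \emph{equality} condition guaranteed by $H$ lying in the critical cone (cf.\ the derivation of Proposition~\ref{prop:critical_cone-eq}(i)).

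Your Schur--Horn plus rearrangement route is morally the same argument (it is Ky Fan specialized to a diagonal first factor), but the reduction step has a gap. A block-diagonal $O^l$ respecting $\{\beta^l_k\}$ certainly diagonalizes $A^l$, but it need not produce a \emph{globally sorted} diagonal across the blocks. Yet your next two sentences rely on sortedness: the identification of $\tilde\alpha^{\tilde l}$ with ``local indices sharing a common diagonal value of $A^l$'' and the description of $\lambda'_k(A^l;\widehat V^l)$ as eigenvalues of the principal submatrix $\widehat V^l_{\tilde\alpha^{\tilde l}\tilde\alpha^{\tilde l}}$ are only correct when the diagonal of $A^l$ coincides with $\lambda(A^l)$. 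Sortedness does hold here---again because Ky Fan's equality for $H\in\mathcal C(\overline X+\overline Y;\partial\theta_1(\overline X))$ supplies a block-diagonal $R^l$ that both commutes with $\Lambda(\overline{Y})_{\alpha^l\alpha^l}$ and sorts $A^l$---but you need to say so. Once that is stated, invoking Lemma~\ref{lem:Fan} directly is shorter than the Schur--Horn/rearrangement detour.
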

\begin{proof} For any $W\in\mathbb{S}^n$, we have 
	\begin{eqnarray*}
		\langle W,\overline{Y}\rangle &=&\langle W, \overline{U}\Lambda(\overline{Y})\overline{U}^\top \rangle =\langle \overline{U}^\top W\overline{U}, \Lambda(\overline{Y})\rangle \\ [3pt]
&=& \sum_{l=1}^r\langle\Lambda(\overline{Y})_{\alpha^l\alpha^l}, \overline{U}^\top _{\alpha^l}W\overline{U}_{\alpha^l}\rangle =\sum_{l=1}^r \langle\Lambda(\overline{Y})_{\alpha^l\alpha^l}, \overline{U}^\top _{\alpha^l}(W-2H(\overline{X}-\bar{v}_lI)^{\dagger}H)\overline{U}_{\alpha^l}\rangle \\ [3pt]
&& + 2\sum_{l=1}^r\langle \Lambda(\overline{Y})_{\alpha^l\alpha^l}, \overline{U}^\top _{\alpha^l}H(\overline{X}-\bar{v}_lI)^{\dagger}H\overline{U}_{\alpha^l}\rangle.
\end{eqnarray*}
It follows from \cite[Example 2.68]{BShapiro00} that 
\[
\phi_1''(\lambda(\overline{X});\lambda'(\overline{X};H),\lambda''(\overline{X};H,W))=\max_{i\in\xi_1(\lambda(\overline{X}),\lambda'(\overline{X};H))} \langle \lambda''(\overline{X};H,W), {\bf a}^i\rangle,
\]
where  $\xi_1(\lambda(\overline{X}),\lambda'(\overline{X};H))\subseteq\iota_1(\lambda(\overline{X}))$ is defined by
\[
\xi_1(\lambda(\overline{X}),\lambda'(\overline{X};H)):=\left\{i\in \iota_1(\lambda(\overline{X}))\mid \langle \lambda'(\overline{X};H),{\bf a}^i\rangle =\displaystyle{\max_{j\in\iota_1(\lambda(\overline{X}))}} \langle \lambda'(\overline{X};H),{\bf a}^j\rangle \right\}.
\]
We then have
\begin{eqnarray*}
\digamma_{\overline{X}, H}^*(\overline{Y})&=&\sup_{W\in\mathbb{S}^n}\left\{ \langle W,\overline{Y}\rangle  - \phi_1''(\lambda(\overline{X});\lambda'(\overline{X};H),\lambda''(\overline{X};H,W))\right\} \\ [3pt]
&=& 2\sum_{l=1}^r\langle \Lambda(\overline{Y})_{\alpha^l\alpha^l}, \overline{U}^\top _{\alpha^l}H(\overline{X}-\bar{v}_lI)^{\dagger}H\overline{U}_{\alpha^l}\rangle\\[3pt]
&& \hskip -0.2in +\sup_{W\in\mathbb{S}^n}\Big\{\underbrace{\sum_{l=1}^r \langle\Lambda(\overline{Y})_{\alpha^l\alpha^l}, \overline{U}^\top _{\alpha^l}(W-2H(\overline{X}-\bar{v}_lI)^{\dagger}H)\overline{U}_{\alpha^l}\rangle-\max_{i\in\xi_1(\lambda(\overline{X}),\lambda'(\overline{X};H))} \langle \lambda''(\overline{X};H,W),{\bf a}^i\rangle }_{\mbox{denoted as $\Xi(W)$}}\Big\}.
\end{eqnarray*}
Therefore, in order to prove this proposition, it suffices to show that
 $\displaystyle{\sup_{W\in\mathbb{S}^n}}\Xi(W)=0$. 
In fact, since $H\in{\cal C}(\overline{X} + \overline{Y},\partial\theta_1(\overline{X}))$, we know from Proposition \ref{prop:critical_cone-eq}(i) that for each $l\in\{1,\ldots,r\}$, there exists $R^l\in\mathbb{O}^{|\alpha^l|}$ such that
\[
\Lambda(\overline{Y})_{\alpha^l\alpha^l}=R^l\Lambda(\overline{Y})_{\alpha^l\alpha^l}(R^l)^\top \quad {\rm and}\quad \overline{U}_{\alpha^l}^\top H\overline{U}_{\alpha^l} = R^l\Lambda(\overline{U}_{\alpha^l}^\top H\overline{U}_{\alpha^l})(R^l)^\top .
\]
Therefore, for any $W\in \mathbb{S}^n$ and  $l\in\{1,\ldots,r\}$,
\begin{eqnarray*}
	&&\left\langle \,\Lambda(\overline{Y})_{\alpha^l\alpha^l}\, , \, \overline{U}^\top _{\alpha^l}(W-2H(\overline{X}-\bar{v}_lI)^{\dagger}H)\overline{U}_{\alpha^l} \,\right\rangle \\ [3pt]
	&=&\left\langle \,\Lambda(\overline{Y})_{\alpha^l\alpha^l}\, , \, (R^l)^\top \overline{U}^\top _{\alpha^l}(W-2H(\overline{X}-\bar{v}_lI)^{\dagger}H)\overline{U}_{\alpha^l}R^l\,\right\rangle \\ [2pt]
	&=&  \sum_{\tilde{l}=1}^{\tilde{r}} \left\langle \,(\Lambda(\overline{Y})_{\alpha^l\alpha^l})_{\tilde{\alpha}^{\tilde{l}}\tilde{\alpha}^{\tilde{l}}}\, ,\,  (R^l_{\tilde{\alpha}^{\tilde{l}}})^{\top}\overline{U}_{\alpha^{l}}^{\top}\left[\, W-2H(\overline{X}-\bar{v}_lI)^{\dag}H\,\right]\overline{U}_{\alpha^{l}}R^l_{\tilde{\alpha}^{\tilde{l}}} \,\right\rangle \\[2pt]
	&\leq &  \sum_{\tilde{l}=1}^{\tilde{r}}  \, \sum_{i\in \tilde{\alpha}^{\tilde{l}}} \lambda_i(\overline{Y}) \, \lambda_{i}\left(R_{\tilde{\alpha}^{\tilde{l}}}^{\top}\overline{U}_{\alpha^{l}}^{\top}\left[ \, W-2H(\overline{X}-\bar{v}_lI)^{\dag}H \, \right]\overline{U}_{\alpha^{l}}R_{\tilde{\alpha}^{\tilde{l}}}\right)=   \sum_{i\in \alpha^l} \lambda_i(\overline{Y})\lambda''_i(\overline{X};H,W),
\end{eqnarray*}
where the first inequality follows from Ky Fan's inequality in Lemma \ref{lem:Fan} and the last equality is due to Lemma \ref{prop:second-directional-diff-eigenvalue}. 
Since $\lambda(\overline{Y})\in\partial\phi_1(\lambda(\overline{X}))$, we know from \eqref{subgradient} that there exists $\{u_i \in [0, 1]\}_{i\in\iota_1(\lambda(\overline{X}))}$ with   $\displaystyle\sum_{i\in\iota_1(\lambda(\overline{X}))}u_i=1$ such that 
$
\lambda(\overline{Y})=\displaystyle\sum_{i\in\iota_1(\lambda(\overline{X}))}u_i \, {\bf a}^i.
$
It then follows from Proposition \ref{prop:critical_cone-eq}~(ii) that if $H\in{\cal C}(\overline{X}  + \overline{Y},\partial\theta_1(\overline{X}))$, then 
\[
\eta_1(\lambda(\overline{X}),\lambda(\overline{Y}))\subseteq\xi_1(\lambda(\overline{X}),\lambda'(\overline{X};H)),
\]
where the index set $\eta_1(\lambda(\overline{X}),\lambda(\overline{Y}))$ is defined in \eqref{eq:def-index-eta-1}. We  then derive
\begin{eqnarray*}
\Xi(W)& \, \le \, & \sum_{l=1}^r\sum_{i\in \alpha^l} \lambda_i(\overline{Y})\lambda''_i(\overline{X};H,W)-\max_{i\in\xi_1(\lambda(\overline{X}),\lambda'(\overline{X};H))} \langle \lambda''(\overline{X};H,W),{\bf a}^i\rangle \\ [3pt]
 & = & \left\langle \sum_{i\in\iota_1(\lambda(\overline{X}))}u_i \, {\bf a}^i, \lambda''(\overline{X};H,W) \right\rangle -\max_{i\in\xi_1(\lambda(\overline{X}),\lambda'(\overline{X};H))} \langle \lambda''(\overline{X};H,W),{\bf a}^i\rangle \\ [3pt]
	&\le& \sum_{i\in \eta_1(\lambda(\overline{X}),\lambda(\overline{Y}))}u_i\langle \lambda''(\overline{X};H,W), {\bf a}^i\rangle-\max_{i\in\xi_1(\lambda(\overline{X}),\lambda'(\overline{X};H))} \langle \lambda''(\overline{X};H,W),{\bf a}^i\rangle\\ [3pt]
	&\le & \sum_{i\in \eta_1(\lambda(\overline{X}),\lambda(\overline{Y}))}u_i\max_{i\in \eta_1(\lambda(\overline{X}),\lambda(\overline{Y}))} \langle \lambda''(\overline{X};H,W),{\bf a}^i\rangle  -\max_{i\in\xi_1(\lambda(\overline{X}),\lambda'(\overline{X};H))} \langle \lambda''(\overline{X};H,W),{\bf a}^i\rangle \\ [3pt]
	&=& \max_{i\in \eta_1(\lambda(\overline{X}),\lambda(\overline{Y}))}  \langle \lambda''(\overline{X};H,W),{\bf a}^i\rangle -\max_{i\in\xi_1(\lambda(\overline{X}),\lambda'(\overline{X};H))} \langle \lambda''(\overline{X};H,W),{\bf a}^i\rangle \le 0.
\end{eqnarray*}
On the other hand, it is easy to see that $\Xi(\widehat{W})=0$ if $$\overline{U}_{\alpha^{l}}^{\top}\,\widehat{W}\,\overline{U}_{\alpha^{l}}=2\,\overline{U}_{\alpha^{l}}^{\top}H(\overline{X}-\bar{v}_lI)^{\dag}H\overline{U}_{\alpha^{l}},\epc l=1,\ldots,r.$$ Therefore, we know that
$\displaystyle{\sup_{W\in\mathbb{S}^n}}\Xi(W)=0$. This completes the proof of the proposition.
\qed

\end{proof}


\begin{remark}\label{remark:Upsilon}
In fact, for any given $\overline{X}\in \mathbb{S}^n$  and any $\overline{Y}, H\in \mathbb{S}^n$ (not necessary in ${\cal C}(\overline{X}+\overline{Y},\partial\theta_1(\overline{X}))$), we can define the function $\Upsilon_{ \overline{X}}^1:  \mathbb{S}^n\times\mathbb{S}^n\to\mathbb{R}$ as the right side of \eqref{eq:sigma1}, i.e.,  
	\begin{equation}\label{eq:def-Upsilon1}
		\Upsilon_{\overline{X}}^1\left(\overline{Y},H\right):=2\sum_{l=1}^r\langle \Lambda(\overline{Y})_{\alpha^l\alpha^l}, \overline{U}^\top _{\alpha^l}H(\overline{X}-\bar{v}_lI)^{\dagger}H\overline{U}_{\alpha^l}\rangle,
	\end{equation}
where $\overline{U}\in \mathbb{O}^n(\overline{X})\cap \mathbb{O}^n(\overline{Y})$. Notice that if $\overline{Y}\in \partial \theta_1(\overline{X})$, it holds that
\begin{equation}\label{eq:Upsilon1-eq}
\Upsilon_{\overline{X}}^1\left(\overline{Y},H\right)=-2\sum_{1\le l<l'\le r}\sum_{i\in\alpha^l}\sum_{j\in\alpha^{l'}}\frac{\lambda_i(\overline{Y})-\lambda_j(\overline{Y})}{\lambda_i(\overline{X})-\lambda_j(\overline{X})}(\overline{U}_{\alpha^l}^\top H\overline{U}_{\alpha^{l'}})_{ij}^2.
\end{equation}
Since for any $i\in\alpha^l$ and $j\in\alpha^{l'}$ with $1\le l<l'\le r$, $\displaystyle\frac{\lambda_i(\overline{Y})-\lambda_j(\overline{Y})}{\lambda_i(\overline{X})-\lambda_j(\overline{X})}\ge 0$, we conclude that 
\[
\Upsilon_{\overline{X}}^1\left(\overline{Y},H\right)\le 0,\quad \mbox{$\forall\,H\in\mathbb{S}^n$}.
\]
\end{remark}

\subsection{Variational properties of $\theta_2$}\label{subsection:theta2}
In this subsection, we present analogue results  with respect to the function $\theta_2$. 
Recall the definition of the convex piecewise affine function $\psi$ in \eqref{eq:domphi}.
For notational simplicity, we denote $\zeta:=\psi\circ\lambda$ as the spectral function associated with $\psi$. Thus, the function $\theta_2$ can be viewed as the 
 the indictor function of the closed convex set $\mathcal{K}$ that is defined in the following way
\begin{equation}\label{eq:def-K-set}
{\cal K}:=\left\{X\in \mathbb{S}^n\mid \lambda(X)\in {\rm dom}\, \phi\right\}=\left\{X\in \mathbb{S}^n\mid \zeta(X)\le 0\right\}.
\end{equation}
Let $\overline{X}\in {\cal K}$ be given. Denote $ {\cal N}_{\cal K}(\overline{X})$ as  the normal cone of ${\cal K}$ at $\overline{X}\in\mathbb{S}^n$ in the sense of convex analysis \cite{rockafellar1970}. In the rest of the paper, we assume the following Slater condition for the closed convex set ${\cal K}$.
\begin{assumption}\label{ass:Slater-set}
There exists $\widetilde{X}\in {\cal K}$ such that $\zeta(\widetilde{X})<0$.
\end{assumption}
It is worth mentioning that the above assumption automatically holds for many interesting matrix optimization problems, such as the negative semidefinite programming (where $\mathcal{K}$ is the negative semidefinite matrix cone).

Recall the index sets $\{\alpha^l\}_{l=1}^r$ given by \eqref{eq:ak-symmetric} with respect to $\overline{X}$.  Three variational properties with respect to $\theta_2$  are in order.

\vskip 10 true pt 
\noindent
\underline{\bf The tangent cone and its lineality space}. Let $\overline{X}\in {\cal K}$ be such that $\zeta(\overline{X})=0$. Since $\zeta:\mathbb{S}^n\to \mathbb{R}$ is a closed convex function, it follows from \cite[Proposition 2.61]{BShapiro00} that the tangent cone ${\cal T}_{\cal K}(\overline{X})$ of the closed convex set ${\cal K}$ is given by
\begin{equation*} 
	{\cal T}_{\cal K}(\overline{X}):=\left\{H\in\mathbb{S}^n\mid \zeta'(\overline{X};H)\le 0 \right\}.
\end{equation*}
Let $\iota_2(\lambda(\overline{X}))$ be the index set defined by \eqref{index1} with respect to $\lambda(\overline{X})$, i.e., 
\[
\iota_2(\lambda(\overline{X})) = \{1\leq i\leq q\mid \langle {\bf b}^i,\lambda(\overline{X})\rangle - d_i= 0\}.
\]  
It then follows from \cite[Example 2.68]{BShapiro00} that for any $H\in \mathbb{S}^n$,
\begin{equation*}
\zeta'(\overline{X};H)=\psi'(\lambda(\overline{X});\lambda'(\overline{X};H))=\max_{i\in \iota_2(\lambda(\overline{X}))}\langle {\bf b}^i,\lambda'(\overline{X};H) \rangle.
\end{equation*}
Thus, the tangent cone ${\cal T}_{\cal K}(\overline{X})$ of the convex set ${\cal K}$ can be re-written as 
\begin{equation}\label{eq:Tangent2}
{\cal T}_{\cal K}(\overline{X})=\left\{H\in\mathbb{S}^n\mid \langle {\bf b}^i,\lambda'(\overline{X};H) \rangle\le 0,\  \forall\, i\in \iota_2(\lambda(\overline{X})) \right\}.
\end{equation}
Moreover, the corresponding lineality space ${\rm lin}({\cal T}_{\cal K}(\overline{X}))$ of ${\cal T}_{\cal K}(\overline{X})$ is given by
\begin{eqnarray}
{\rm lin}({\cal T}_{\cal K}(\overline{X})) &:=&{\cal T}_{\cal K}(\overline{X})\cap (-{\cal T}_{\cal K}(\overline{X}))=\left\{H\in \mathbb{S}^n\mid \zeta'(\overline{X};H)\le 0 \le -\zeta'(\overline{X};-H)\right\} \nonumber \\[0.1in] 
&=& \left\{H\in \mathbb{S}^n\mid \zeta'(\overline{X};H)=  -\zeta'(\overline{X};-H)=0\right\}, \label{eq:def-Tlin2}
\end{eqnarray}
where the last equality follows from \cite[Theorem 23.1]{rockafellar1970}. 
Define the index set 
\[
\widetilde{\cal F}:=\{l\in\{1,\ldots,r\}\mid \mbox{$\exists\,i,j\in \alpha^l$ such that $({\bf a}^w)_i\neq({\bf a}^w)_j$ for some $w\in \iota_2(\lambda(\overline{X}))$}\}.
\]
By employing similar arguments in the proof of Proposition \ref{prop:chara-Tlin}, we obtain the following characterization of ${\rm lin}({\cal T}_{\cal K}(\overline{X}))$ based on Corollary \ref{remark:Ix}.

\begin{proposition}\label{prop:chara-Tlin2}
Let $H\in \mathbb{S}^n$. Then $ H \in {\rm lin}({\cal T}_{\cal K}(\overline{X}))$ implies the existence of scalars $\{\widetilde{\rho}_{l}\}_{l\in \widetilde{F}}$ such that for  any $\overline{V}\in{\mathbb O}^{n}(\overline{X})$,
	\[
	\overline{V}^\top _{\alpha^l}\, H \, \overline{V}_{\alpha^l}=\widetilde{\rho}_{l} \, I_{|\alpha^l|}.
	\]
	In fact, 
 \[
 H \in {\rm lin}({\cal T}_{\cal K}(\overline{X})) \; \Longleftrightarrow\;
 \left[\, \langle \lambda'(\overline{X};H), {\bf b}^i\rangle=0,\quad \forall\; i\in \iota_2(\lambda(\overline{X}))\,\right].\]

\end{proposition}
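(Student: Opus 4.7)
The plan is to mirror the argument used in Proposition \ref{prop:chara-Tlin}, replacing the convex piecewise affine function $\phi_1$ (with bases $\{{\bf a}^i\}$) by the convex piecewise affine function $\psi$ (with bases $\{{\bf b}^i\}$) and the function $\theta_1$ by $\zeta=\psi\circ\lambda$. The key inputs are: $\psi$ is symmetric (by Remark \ref{remark:symmetric basis} applied to $\phi=\phi_1+\phi_2$, since ${\rm dom}\,\phi=\{\psi\leq 0\}$); the eigenvalue directional derivative formulas from Lemma \ref{prop:eigenvalue_diff}, in particular that $\lambda'(\overline{X};-H)$ differs from $-\lambda'(\overline{X};H)$ only by a permutation $Q\in\mathbb{P}^n_{\lambda(\overline{X})}$; and the subgradient representation $\partial\psi(\lambda(\overline{X}))={\rm conv}\{{\bf b}^i\mid i\in\iota_2(\lambda(\overline{X}))\}$.

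First I would rewrite
\[
\zeta'(\overline{X};H)=\psi'(\lambda(\overline{X});\lambda'(\overline{X};H))=\sup_{z\in\partial\psi(\lambda(\overline{X}))}\langle z,\lambda'(\overline{X};H)\rangle,
\]
and use the symmetry of $\psi$ together with the permutation relation between $\lambda'(\overline{X};-H)$ and $-\lambda'(\overline{X};H)$ to obtain
\[
\zeta'(\overline{X};-H)=\psi'(\lambda(\overline{X});-\lambda'(\overline{X};H))=-\inf_{z\in\partial\psi(\lambda(\overline{X}))}\langle z,\lambda'(\overline{X};H)\rangle.
\]
Substituting these two identities into the definition \eqref{eq:def-Tlin2} forces both the supremum and the infimum of $\langle z,\lambda'(\overline{X};H)\rangle$ over $z\in\partial\psi(\lambda(\overline{X}))$ to vanish; since $\partial\psi(\lambda(\overline{X}))={\rm conv}\{{\bf b}^i\mid i\in\iota_2(\lambda(\overline{X}))\}$ by \eqref{subgradient}, this is equivalent to $\langle \lambda'(\overline{X};H),{\bf b}^i\rangle=0$ for every $i\in\iota_2(\lambda(\overline{X}))$, which is the stated equivalence.

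For the block-scalar structure claim I would imitate the permutation-swap argument from the latter part of the proof of Proposition \ref{prop:critical_cone-eq}: fix $l\in\widetilde{\cal F}$ and pick $i,j\in\alpha^l$ together with $w\in\iota_2(\lambda(\overline{X}))$ with $({\bf b}^w)_i\neq({\bf b}^w)_j$. Since $\lambda_i(\overline{X})=\lambda_j(\overline{X})$, the transposition $Q^{ij}$ lies in $\mathbb{P}^n_{\lambda(\overline{X})}$, so by Corollary \ref{remark:Ix}(i) there is $w'\in\iota_2(\lambda(\overline{X}))$ with ${\bf b}^{w'}=Q^{ij}{\bf b}^w$; subtracting the two equalities $\langle\lambda'(\overline{X};H),{\bf b}^w\rangle=\langle\lambda'(\overline{X};H),{\bf b}^{w'}\rangle=0$ gives $\lambda'_i(\overline{X};H)=\lambda'_j(\overline{X};H)$. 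Iterating this across all pairs in $\alpha^l$ (exactly as in the proof of Proposition \ref{prop:critical_cone-eq}) produces a common value $\widetilde{\rho}_l$, and Lemma \ref{prop:eigenvalue_diff} then identifies every eigenvalue of $\overline{V}_{\alpha^l}^\top H\overline{V}_{\alpha^l}$ with $\widetilde{\rho}_l$, forcing $\overline{V}_{\alpha^l}^\top H\overline{V}_{\alpha^l}=\widetilde{\rho}_l I_{|\alpha^l|}$ for every $\overline{V}\in\mathbb{O}^n(\overline{X})$.

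The only nontrivial point—rather than a genuine obstacle—is the careful bookkeeping of signs and permutations when converting $\zeta'(\overline{X};-H)$ into an infimum over $\partial\psi(\lambda(\overline{X}))$. Once the symmetry of $\psi$ is invoked to absorb the permutation between $\lambda'(\overline{X};-H)$ and $-\lambda'(\overline{X};H)$, the remainder of the proof is a direct transcription of the template of Proposition \ref{prop:chara-Tlin}, with the convex-hull representation of $\partial\phi_1$ replaced by the analogous convex-hull representation of $\partial\psi$.
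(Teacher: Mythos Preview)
Your proposal is correct and follows essentially the same approach as the paper, which explicitly states that Proposition~\ref{prop:chara-Tlin2} is obtained ``by employing similar arguments in the proof of Proposition~\ref{prop:chara-Tlin}'' together with Corollary~\ref{remark:Ix}. One minor remark: your appeal to Remark~\ref{remark:symmetric basis} only gives symmetry of $\phi_2=\delta_{{\rm dom}\,\phi}$ (i.e., symmetry of the set ${\rm dom}\,\phi$), not directly of $\psi$; the symmetry of $\psi$ that you actually use comes from the representation in Proposition~\ref{prop:pwlc-sym}, which lets one take the bases $\{{\bf b}^i\}$ closed under permutation (this is also what underlies Corollary~\ref{remark:Ix}).
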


\vskip 10 true pt
\noindent
\underline{\bf The critical cone}. 
Let $\overline{X}\in \mathcal{K}$ and $\overline{Z}\in {\cal N}_{\cal K}(\overline{X})$.
The critical cone of  $\mathcal{N}_{\cal K}(\overline{X})$ at $\overline{X} + \overline{Z}$ is defined by 
\begin{equation}\label{eq:critical-cone-def}
{\cal C}(\overline{X} + \overline{Z}; {\cal N}_{\cal K}(\overline{X})):= {\cal T}_{\cal K}(\overline{X})\cap\overline{Z}^{\perp}= \left\{H\in \mathbb{S}^n\mid \zeta'(\overline{X};H)\le 0,\  \langle \overline{Z}, H\rangle =0  \right\}.
\end{equation}
For each $l\in\{1,\ldots,r\}$, similar to the definition of the index sets $\beta_k^l$ in \eqref{eq:def-beta-index}, we use the notation $\{\gamma_k^l\}_{k=1}^{t_l}$ to further partition the set $\alpha^l$ based on the eigenvalue of $\overline{Z}$ as
\begin{equation}\label{eq:def-omega-index}
\left\{ \begin{array}{ll}
\lambda_i(\overline{Z})=\lambda_j(\overline{Z}) & \epc \mbox{if $i,j\in \gamma_k^l$ and $k\in\{1,\ldots, t_l\}$}, \\ [0.1in]
\lambda_i(\overline{Z})>\lambda_j(\overline{Z}) & \epc \mbox{if $i\in \gamma_k^l$, $j\in \gamma_{k'}^{l}$ and $k, k'\in \{1,\ldots, t_l\}$ with $k<k'$}.
\end{array}
\right.
\end{equation}
Recall the index set $\eta_2(\lambda(\overline{X}),\lambda(\overline{Z}))$ defined in \eqref{eq:def-index-eta-1} with respect to $\lambda(\overline{X})$ and $\lambda(\overline{Z})$.  For each $l\in\{1,\ldots,r\}$, define the index set
\begin{equation}\label{def-E-index2}
{\cal F}_l:=\{k\in\{1,\ldots,t_l\}\mid \mbox{$\exists\,i,j\in \gamma^l_{k}$ such that $({\bf b}^w)_i\neq({\bf b}^w)_j$ for some $w\in \eta_2(\lambda(\overline{X}),\lambda(\overline{Z}))$}\}.
\end{equation} 
The following  result on the characterization of ${\cal C}(\overline{X} + \overline{Z}; {\cal N}_{\cal K}(\overline{X}))$ can be obtained similarly as Proposition \ref{prop:critical_cone-eq} for $\theta_1$. For brevity, we omit the proof here.

\begin{proposition}\label{prop:critical_cone2-eq}
	Suppose that $(\overline{X},\overline{Z})\in {\rm gph}\,{\cal N}_{\cal K}$ and $\overline{V}\in \mathbb{O}^n(\overline{X})\cap \mathbb{O}^n(\overline{Z})$. If $H\in {\cal C}(\overline{X} + \overline{Z}; {\cal N}_{\cal K}(\overline{X}))$, then the following three conditions hold:
	\vskip 0.1in
\noindent
(i) for each $l\in\{1,\ldots,r\}$, $\overline{V}_{\alpha^l}^\top H\overline{V}_{\alpha^l}$ has the following block diagonal structure, i.e.,
		\[
		\overline{V}_{\alpha^l}^\top H\overline{V}_{\alpha^l}={\rm Diag}\left( (\overline{V}_{\alpha^l}^\top H\overline{V}_{\alpha^l})_{\gamma_1^l\gamma_1^l}, \cdots,(\overline{V}_{\alpha^l}^\top H\overline{V}_{\alpha^l})_{\gamma_{t_l}^l\gamma_{t_l}^l}\right); 
		\]
		(ii) 
$
\langle\lambda'(\overline{X};H),{\bf b}^i\rangle=\displaystyle\max_{j\in\iota_2(\lambda(\overline{X}))} \langle\lambda'(\overline{X};H),{\bf b}^j\rangle = 0,\quad \forall \; i\in\eta_2(\lambda(\overline{X}),\lambda(\overline{Y}));
$ 
\vskip 0.05in
\noindent
(iii) for each $l\in\{1,\ldots,r\}$ and $k\in {\cal F}_l$, there exists a scalar $\rho^l_{k}\in\mathbb{R}$ such that $(\overline{V}_{\alpha^l}^\top H\overline{V}_{\alpha^l})_{\gamma_{k}^l\gamma_{k}^l}=\rho^l_{k} \, I_{|\gamma_{k}^l|}$.
\vskip 0.1in
\noindent
	In fact, $H\in {\cal C}(\overline{X} + \overline{Z}; {\cal N}_{\cal K}(\overline{X}))$ if and only if for any $i \in\eta_2(\lambda(\overline{X}),\lambda(\overline{Z}))$,
	\[
	\langle {\rm diag}(\overline{V}^\top H\overline{V}), {\bf b}^i  \rangle=\max_{j\in\iota_2(\lambda(\overline{X}))} \langle\lambda'(\overline{X};H),{\bf b}^j\rangle = 0,
	\] 
	where the index set $\eta_2$ and $\iota_2$ are defined in \eqref{eq:def-index-eta-1} and \eqref{index1}. 
\end{proposition}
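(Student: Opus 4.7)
The plan is to mirror the proof of Proposition \ref{prop:critical_cone-eq} line by line, replacing the convex-combination representation of $\lambda(\overline{Y}) \in \partial \phi_1(\lambda(\overline{X}))$ by the conic representation appropriate to the normal cone. The starting point is that $\theta_2 = \delta_{\cal K}$, so by Lemma \ref{lemma:subdiff-spectral-function} together with the second identity in \eqref{subgradient}, any $\overline{Z} \in {\cal N}_{\cal K}(\overline{X})$ admits the decomposition $\lambda(\overline{Z}) = \sum_{i \in \eta_2(\lambda(\overline{X}),\lambda(\overline{Z}))} u_i {\bf b}^i$ with $u_i > 0$, and there exists $\overline{V} \in \mathbb{O}^n(\overline{X}) \cap \mathbb{O}^n(\overline{Z})$. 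The two defining conditions of the critical cone in \eqref{eq:critical-cone-def} are $\zeta'(\overline{X}; H) \leq 0$ and $\langle \overline{Z}, H\rangle = 0$, and these will be processed by Ky Fan's inequality (Lemma \ref{lem:Fan}) and the directional-derivative formula of Lemma \ref{prop:eigenvalue_diff} respectively.

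For the forward implication I would write
\begin{equation*}
0 = \langle \overline{Z}, H\rangle = \sum_{l=1}^{r}\langle \Lambda(\overline{Z})_{\alpha^l\alpha^l},\, \overline{V}_{\alpha^l}^\top H \overline{V}_{\alpha^l}\rangle \;\leq\; \langle \lambda(\overline{Z}), \lambda'(\overline{X}; H)\rangle \;=\; \sum_{i \in \eta_2} u_i \langle {\bf b}^i, \lambda'(\overline{X};H)\rangle,
\end{equation*}
and then use $\eta_2 \subseteq \iota_2(\lambda(\overline{X}))$ together with $\langle {\bf b}^i, \lambda'(\overline{X};H)\rangle \leq \max_{j \in \iota_2(\lambda(\overline{X}))}\langle {\bf b}^j, \lambda'(\overline{X};H)\rangle = \zeta'(\overline{X};H) \leq 0$ to squeeze the displayed chain between two zeros, forcing equality throughout. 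Saturation of Ky Fan's inequality produces an orthogonal $R^l = {\rm Diag}(R^l_1,\ldots,R^l_{t_l})$ with $R^l_k \in \mathbb{O}^{|\gamma_k^l|}$ under the partition $\{\gamma_k^l\}_{k=1}^{t_l}$ of $\alpha^l$ induced by $\overline{Z}$, which is exactly statement (i). The strict positivity of $u_i$ together with $\langle {\bf b}^i, \lambda'(\overline{X};H)\rangle \leq 0$ and $\sum_i u_i\langle {\bf b}^i, \lambda'(\overline{X};H)\rangle = 0$ forces every term to vanish, and the max over $\iota_2(\lambda(\overline{X}))$ -- being at most zero yet bounded below by any value attained on $\eta_2$ -- must itself equal zero, giving (ii). For (iii), I would copy the permutation argument used for $\theta_1$: when $k \in {\cal F}_l$ and $i, j \in \gamma_k^l$ satisfy $({\bf b}^w)_i \neq ({\bf b}^w)_j$ for some $w \in \eta_2$, the transposition $Q^{i,j}$ fixes $\lambda(\overline{X})$ and $\lambda(\overline{Z})$, so Corollary \ref{remark:Ix}(ii) furnishes $w' \in \eta_2$ with ${\bf b}^{w'} = Q^{i,j}{\bf b}^w$; subtracting the identities $\langle {\bf b}^w, \lambda'(\overline{X};H)\rangle = \langle {\bf b}^{w'}, \lambda'(\overline{X};H)\rangle = 0$ collapses to $(({\bf b}^w)_i - ({\bf b}^w)_j)(\lambda_i'(\overline{X};H) - \lambda_j'(\overline{X};H)) = 0$, and iterating over pairs in $\gamma_k^l$ produces the common value $\rho^l_k$.

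For the equivalent characterization in the ``in fact'' statement, I would argue the converse by replicating the long algebraic aggregation from the proof of Proposition \ref{prop:critical_cone-eq}. Under (i) and (iii), the inner product $\langle \lambda'(\overline{X};H), {\bf b}^w\rangle$ for $w \in \eta_2$ splits into contributions from $k \in {\cal F}_l$ (where $\lambda'$ is the constant $\rho^l_k$ on $\gamma_k^l$) and from $k \notin {\cal F}_l$ (where $({\bf b}^w)_{\gamma_k^l}$ is a constant multiple of ${\bf e}_{|\gamma_k^l|}$, reducing the pairing to a trace). Both pieces reassemble exactly as $\langle {\rm diag}(\overline{V}^\top H \overline{V}), {\bf b}^w\rangle$, so the hypothesis $\langle {\rm diag}(\overline{V}^\top H \overline{V}), {\bf b}^i\rangle = 0$ for $i \in \eta_2$ combined with $\lambda(\overline{Z}) = \sum_i u_i {\bf b}^i$ gives $\langle \overline{Z}, H\rangle = 0$, while the assumed equality of the max with zero is precisely $\zeta'(\overline{X};H) \leq 0$.

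The main obstacle relative to the $\theta_1$ argument is simply bookkeeping: ${\cal N}_{\cal K}(\overline{X})$ is a conic rather than a convex hull, so the multipliers $u_i$ are positive but unnormalized, and one must resist invoking any sum-to-one condition. Conceptually the $\theta_2$ case is even cleaner because the polar relation between ${\cal T}_{\cal K}(\overline{X})$ and ${\cal N}_{\cal K}(\overline{X})$ forces the max in (ii) to vanish outright, eliminating the ``$\xi_1$-max'' layer that complicates the $\theta_1$ analysis; consequently no refinement of $\iota_2$ analogous to $\xi_1$ is needed here.
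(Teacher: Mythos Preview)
Your proposal is correct and takes essentially the same approach as the paper, which explicitly states that the proof ``can be obtained similarly as Proposition \ref{prop:critical_cone-eq} for $\theta_1$'' and omits the details. You have accurately identified the needed adaptations: the conic (rather than convex-hull) representation $\lambda(\overline{Z})=\sum_{i\in\eta_2}u_i{\bf b}^i$ with $u_i>0$, and the fact that the tangent--normal polarity forces the maximum in (ii) to equal zero outright, which as you note makes the $\theta_2$ argument slightly cleaner than its $\theta_1$ counterpart.
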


The  results below on the characterization of the affine hull  of the critical cone ${\cal C}(\overline{X} + \overline{Z}; {\cal N}_{\cal K}(\overline{X}))$ follows from Proposition \ref{prop:critical_cone2-eq}. 
\begin{proposition}\label{prop:critical_cone_aff2}
	Suppose that $(\overline{X},\overline{Z})\in {\rm gph}\,{\cal N}_{\cal K}$. Let $\overline{V}\in \mathbb{O}^n(\overline{X}) \cap \mathbb{O}^n(\overline{Z})$. Then $H\in {\rm aff}\,({\cal C}(\overline{X} + \overline{Z}; {\cal N}_{\cal K}(\overline{X})))$ if and only if it satisfies the properties (i) and (iii) in Proposition \ref{prop:critical_cone2-eq}, and  for any $i \in\eta_2(\lambda(\overline{X}),\lambda(\overline{Z}))$,
	\[
	\langle {\rm diag}(\overline{V}^\top H\overline{V}), {\bf b}^i  \rangle = 0,
	\] 
	where the index set $\eta_2(\lambda(\overline{X}),\lambda(\overline{Z}))\subseteq\iota_{2}(\lambda(\overline{X}))$ is defined in \eqref{eq:def-index-eta-1} with respect to $\lambda(\overline{X})$ and $\lambda(\overline{Z})$.
\end{proposition}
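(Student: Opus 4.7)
My plan is to follow the pattern used for the $\theta_1$ counterpart in Proposition~\ref{prop:critical_cone_aff}. Write ${\cal C}:={\cal C}(\overline{X}+\overline{Z};{\cal N}_{\cal K}(\overline{X}))$ for brevity. Since this is a convex cone containing the origin, ${\rm aff}({\cal C})={\cal C}-{\cal C}$, so the task reduces to matching this set of differences with the set of matrices satisfying properties (i) and (iii) of Proposition~\ref{prop:critical_cone2-eq} together with the equality $\langle {\rm diag}(\overline{V}^\top H\overline{V}),{\bf b}^i\rangle=0$ for every $i\in\eta_2(\lambda(\overline{X}),\lambda(\overline{Z}))$.

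For the forward inclusion, properties (i) and (iii) of Proposition~\ref{prop:critical_cone2-eq} describe two linear subspaces of $\mathbb{S}^n$ (a prescribed block-diagonal structure with respect to $\{\gamma_k^l\}$ and a proportional-to-identity form on the blocks indexed by ${\cal F}_l$), and $\langle {\rm diag}(\overline{V}^\top H\overline{V}),{\bf b}^i\rangle=0$ is linear in $H$. All three conditions are therefore preserved under the subtraction of two elements of ${\cal C}$, so every $H\in{\rm aff}({\cal C})$ satisfies them.

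For the reverse inclusion, I would start from $H$ satisfying the three stated conditions and first translate the equality condition using the structure of ${\bf b}^w$ for $w\in\eta_2$: by the definition \eqref{def-E-index2} of ${\cal F}_l$, for $k\notin{\cal F}_l$ the entries $({\bf b}^w)_i$ are constant on $i\in\gamma_k^l$, so together with (i) and (iii), a computation analogous to the one leading to the final displayed identity in the proof of Proposition~\ref{prop:critical_cone-eq} yields $\langle \lambda'(\overline{X};H),{\bf b}^w\rangle=\langle {\rm diag}(\overline{V}^\top H\overline{V}),{\bf b}^w\rangle=0$ for every $w\in\eta_2$. To express $H$ as a difference of two elements of ${\cal C}$, I would select a reference $H^*\in{\cal C}$ with strict slack on the inactive constraints, namely $\langle \lambda'(\overline{X};H^*),{\bf b}^j\rangle<0$ for every $j\in\iota_2\setminus\eta_2$. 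Then both $H^*\pm tH$ satisfy the linear conditions automatically, and by the Lipschitz continuity of $\lambda'(\overline{X};\cdot)$ (cf.\ Lemma~\ref{prop:eigenvalue_diff}), retain the strict inequalities for $t>0$ sufficiently small, placing them in ${\cal C}$. Consequently $H=(2t)^{-1}((H^*+tH)-(H^*-tH))\in{\cal C}-{\cal C}={\rm aff}({\cal C})$.

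The main obstacle is producing the reference direction $H^*$ with genuinely strict slack on the inactive constraints. I plan to construct it by invoking the Slater condition in Assumption~\ref{ass:Slater-set}: starting from $\widetilde{X}-\overline{X}$, I would symmetrize through the stabilizer of $\lambda(\overline{X})$ in $\mathbb{P}^n$ (using Corollary~\ref{remark:Ix} to produce the prescribed block-diagonal and proportional-to-identity form), and then add an element of the lineality subspace ${\rm lin}({\cal T}_{\cal K}(\overline{X}))$ characterized in Proposition~\ref{prop:chara-Tlin2} to enforce the equalities for $i\in\eta_2$ without affecting the strict inequalities for $j\in\iota_2\setminus\eta_2$. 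This is the one step where a genuine structural argument is needed; the remaining subtraction-and-continuity manipulation then proceeds mechanically.
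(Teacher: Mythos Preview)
The paper offers no detailed proof of this proposition, stating only that it ``follows from Proposition~\ref{prop:critical_cone2-eq}''; your write-up is therefore already more explicit than the original. Your forward inclusion is correct: conditions (i) and (iii) of Proposition~\ref{prop:critical_cone2-eq} and the equalities $\langle {\rm diag}(\overline{V}^\top H\overline{V}),{\bf b}^i\rangle=0$ for $i\in\eta_2$ cut out a linear subspace containing ${\cal C}$, hence containing ${\rm aff}({\cal C})$.

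The gap is in your construction of the reference direction $H^*$. Adding an element of ${\rm lin}({\cal T}_{\cal K}(\overline{X}))$ cannot selectively raise the values $\langle {\bf b}^i,\lambda'(\overline{X};\cdot)\rangle$ to zero on $\eta_2$ while preserving strict negativity on $\iota_2\setminus\eta_2$: by Proposition~\ref{prop:chara-Tlin2}, every lineality direction already gives zero on \emph{all} of $\iota_2$, so it cannot move any of these values. More fundamentally, $\lambda'(\overline{X};\cdot)$ is not additive, so decomposing $H^*$ as ``symmetrized Slater direction plus lineality correction'' does not let you control the inner products $\langle {\bf b}^i,\lambda'(\overline{X};H^*)\rangle$ term by term. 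The same nonlinearity undermines the symmetrization step as you describe it.

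A cleaner route avoids building $H^*$ by hand and instead exploits that the obstruction is purely polyhedral at the vector level. The cone $\{{\bf d}\in\mathbb{R}^n:\langle {\bf b}^i,{\bf d}\rangle\le 0,\ i\in\iota_2(\lambda(\overline{X}))\}\cap\lambda(\overline{Z})^\perp$ is polyhedral, and the definition \eqref{eq:def-index-eta-1} of $\eta_2$ (as the union of supports of conic representations of $\lambda(\overline{Z})$) is precisely what makes its affine hull equal to $\{{\bf d}:\langle {\bf b}^i,{\bf d}\rangle=0,\ i\in\eta_2\}$; any relative-interior point ${\bf d}^*$ of this polyhedral cone then automatically has strict slack on $\iota_2\setminus\eta_2$. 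Setting $H^*:=\overline{V}\,{\rm Diag}({\bf d}^*)\,\overline{V}^\top$ yields a matrix that is diagonal in the $\overline{V}$-basis (so (i) and (iii) hold trivially), lies in ${\cal C}$ by Corollary~\ref{remark:Ix} and the symmetry of $\psi$, and inherits the strict slack; after this your perturbation argument $H^*\pm tH$ goes through as written.
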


\vskip 10 true pt
\noindent
\underline{\bf The sigma term}. Suppose that $(\overline{X},\overline{Z})\in {\rm gph}\,{\cal N}_{\cal K}$. Let $H\in{\cal C}(\overline{X} + \overline{Z},{\cal N}_{\cal K}(\overline{X}))$ be arbitrarily given. Since $\zeta:\mathbb{S}^n\to\mathbb{R}$ is Lipschitz continuous,  we know from \cite[Lemma 3.1]{BZowe82} that $\zeta$ is (parabolic) second-order directionally differentiable and for any $W\in\mathbb{S}^n$,
\[
\zeta''(\overline{X};H,W)=\psi''(\lambda(\overline{X});\lambda'(\overline{X};H),\lambda''(\overline{X};H,W)).
\]
Since ${\cal K}$ is ${\cal C}^2$-cone reducible (see \cite[Definition 3.135]{BShapiro00} for the definition) and Assumption \ref{ass:Slater-set} holds,  the second-order tangent set of ${\cal K}$ at $\overline{X}$ along $H$ is given by
\[
{\cal T}^2_{\cal K}(\overline{X},H)=\{W\in\mathbb{S}^n\mid \zeta''(\overline{X};H,W)\le 0\}.
\]
As in the conventional conic programming, the sigma term associated with $\mathcal{K}$ is defined as the support function of its second-order tangent set,
whose explicit expression is given in the following proposition. The proof can be obtained in a similar fashion as that of Proposition \ref{prop:sigma-term-1}.
\begin{proposition}
	Suppose that $(\overline{X},\overline{Z})\in {\rm gph}\,{\cal N}_{\cal K}$ and $\overline{V}\in \mathbb{O}^n(\overline{X}) \cap \mathbb{O}^n(\overline{Z})$. Let $H\in{\cal C}(\overline{X} + \overline{Z};{\cal N}_{\cal K}(\overline{X}))$ be given. Then the support function of ${\cal T}^2_{\cal K}(\overline{X},H)$ at $\overline{Z}$ takes the following form
	\begin{equation}\label{eq:conj-sigma2}
		\delta^*_{{\cal T}^2_{\cal K}(\overline{X},H)}(\overline{Z})=2\sum_{l=1}^r\langle \Lambda(\overline{Z})_{\alpha^l\alpha^l}, \overline{V}^\top _{\alpha^l}H(\overline{X}-\bar{v}_lI)^{\dagger}H\overline{V}_{\alpha^l}\rangle.
	\end{equation}
\end{proposition}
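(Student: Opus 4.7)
The plan is to follow the template established in the proof of Proposition~\ref{prop:sigma-term-1}, with two structural adaptations dictated by the fact that $\theta_2 = \delta_{\cal K}$: the relevant second-order object is the support function of the second-order tangent set rather than a Fenchel conjugate, and the coefficients $\{\mu_i\}$ in the conic-combination expansion of $\lambda(\overline{Z}) \in \mathcal{N}_{{\rm dom}\,\phi}(\lambda(\overline{X}))$ are nonnegative but not normalized to sum to one. Under Assumption~\ref{ass:Slater-set}, the $\mathcal{C}^2$-cone reducibility of $\mathcal{K}$ yields $\mathcal{T}^2_{\cal K}(\overline{X},H) = \{W \in \mathbb{S}^n \mid \zeta''(\overline{X};H,W) \le 0\}$, so it suffices to evaluate $\sup\{\langle W,\overline{Z}\rangle \mid \zeta''(\overline{X};H,W)\le 0\}$.

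I would first split the objective using the common eigenbasis $\overline{V}$, writing $\langle W,\overline{Z}\rangle = \sum_{l=1}^r \langle \Lambda(\overline{Z})_{\alpha^l\alpha^l}, \overline{V}_{\alpha^l}^\top(W - 2H(\overline{X}-\bar{v}_l I)^\dagger H)\overline{V}_{\alpha^l}\rangle + 2\sum_{l=1}^r \langle \Lambda(\overline{Z})_{\alpha^l\alpha^l}, \overline{V}_{\alpha^l}^\top H(\overline{X}-\bar{v}_l I)^\dagger H \overline{V}_{\alpha^l}\rangle$, which isolates the claimed closed-form contribution (the second sum) and reduces matters to showing that the supremum of the first (``fluctuation'') sum over feasible $W$ equals zero.

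For the upper bound, Proposition~\ref{prop:critical_cone2-eq}(i) supplies an orthogonal $R^l\in\mathbb{O}^{|\alpha^l|}$ commuting with $\Lambda(\overline{Z})_{\alpha^l\alpha^l}$ that simultaneously diagonalizes $\overline{V}_{\alpha^l}^\top H\overline{V}_{\alpha^l}$; inserting $R^l$, applying Ky Fan's inequality (Lemma~\ref{lem:Fan}), and invoking the second-order eigenvalue formula of Lemma~\ref{prop:second-directional-diff-eigenvalue} bounds the fluctuation sum by $\sum_i \lambda_i(\overline{Z})\,\lambda_i''(\overline{X};H,W) = \langle \lambda(\overline{Z}), \lambda''(\overline{X};H,W)\rangle$. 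Expanding $\lambda(\overline{Z}) = \sum_{i\in\eta_2(\lambda(\overline{X}),\lambda(\overline{Z}))}\mu_i\,{\bf b}^i$ with $\mu_i>0$, and using Proposition~\ref{prop:critical_cone2-eq}(ii), which forces $\eta_2(\lambda(\overline{X}),\lambda(\overline{Z})) \subseteq \xi_2(\lambda(\overline{X}),\lambda'(\overline{X};H))$ (the argmax index set in the explicit formula for $\psi''$), this is further bounded by $\bigl(\sum_i\mu_i\bigr)\,\zeta''(\overline{X};H,W)\le 0$ on the feasible region. Equality is achieved by the choice $\widehat{W}$ with $\overline{V}_{\alpha^l}^\top \widehat{W}\,\overline{V}_{\alpha^l} = 2\,\overline{V}_{\alpha^l}^\top H(\overline{X}-\bar{v}_l I)^\dagger H \overline{V}_{\alpha^l}$ for each $l$: this annihilates the fluctuation sum, and by Lemma~\ref{prop:second-directional-diff-eigenvalue} forces $\lambda''(\overline{X};H,\widehat{W})=0$, whence $\zeta''(\overline{X};H,\widehat{W})=0$, so $\widehat{W}$ is admissible.

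The delicate point is the inclusion $\eta_2 \subseteq \xi_2$, which is where the critical-cone hypothesis on $H$ genuinely enters and which mirrors the $\eta_1 \subseteq \xi_1$ step at the end of Proposition~\ref{prop:sigma-term-1}; once this inclusion is in hand, every remaining piece is routine bookkeeping paralleling the proof for $\theta_1$.
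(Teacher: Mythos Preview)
Your proposal is correct and follows essentially the same approach as the paper, which simply states that the proof ``can be obtained in a similar fashion as that of Proposition~\ref{prop:sigma-term-1}.'' Your two structural adaptations---replacing the Fenchel-conjugate supremum by the constrained supremum over $\{W:\zeta''(\overline{X};H,W)\le 0\}$ and replacing the convex combination $\sum u_i{\bf a}^i$ with the conic combination $\sum \mu_i{\bf b}^i$---are exactly the modifications the analogy requires, and your identification of the inclusion $\eta_2\subseteq\xi_2$ (via Proposition~\ref{prop:critical_cone2-eq}(ii)) as the place where the critical-cone hypothesis enters mirrors the corresponding step in Proposition~\ref{prop:sigma-term-1}.
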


\begin{remark}\label{remark:Upsilon2}
Similarly as that for $\theta_1$, 
for any given $\overline{X}\in\mathbb{S}^n$, define the function $\Upsilon_{ \overline{X}}^2: {\cal N}_{\cal K}(\overline{X})\times\mathbb{S}^n\to\mathbb{R}$ as the value of the right side of  \eqref{eq:conj-sigma2}, i.e.,
	\begin{equation}\label{eq:def-Upsilon2}
		\Upsilon_{\overline{X}}^2\left(\overline{Z},H\right):=2\sum_{l=1}^r\langle \Lambda(\overline{Z})_{\alpha^l\alpha^l}, \overline{V}^\top _{\alpha^l}H(\overline{X}-\bar{v}_lI)^{\dagger}H\overline{V}_{\alpha^l}\rangle, \quad \overline{Z}\in{\cal N}_{\cal K}(\overline{X}) \quad {\rm and} \quad H\in\mathbb{S}^n,
	\end{equation} 
	where $\overline{V}\in\mathbb{O}^n(\overline{X})$.
	If $\overline{Z}\in{\cal N}_{\cal K}(\overline{X})$, then for $\overline{V}\in\mathbb{O}^n(\overline{X})\cap \mathbb{O}^n(\overline{Z})$,
\begin{equation}\label{eq:Upsilon2-eq}
	\Upsilon_{\overline{X}}^2\left(\overline{Z},H\right)=-2\sum_{1\le l<l'\le r}\sum_{i\in\alpha^l}\sum_{j\in\alpha^{l'}}\frac{\lambda_i(\overline{Z})-\lambda_j(\overline{Z})}{\lambda_i(\overline{X})-\lambda_j(\overline{X})}(\overline{V}_{\alpha^l}^\top H\overline{V}_{\alpha^{l'}})_{ij}^2.
\end{equation}
	Moreover, since for any $i\in\alpha^l$ and $j\in\alpha^{l'}$ with $1\le l<l'\le r$, $\displaystyle\frac{\lambda_i(\overline{Z})-\lambda_j(\overline{Z})}{\lambda_i(\overline{X})-\lambda_j(\overline{X})}\ge 0$, we know that 
	\[
	\Upsilon_{\overline{X}}^2\left(\overline{Z},H\right)\le 0,\quad \mbox{$\forall\,H\in\mathbb{S}^n$}.
	\]
\end{remark}

\section{Characterization of the strong regularity}
\label{sec: main results}

This section is devoted to the characterization of the strong regularity of the solution to the KKT optimality condition for problem \eqref{opt}.
 Based on the decomposition of $\theta$ in \eqref{eq:thetasum}, we can rewrite problem  \eqref{opt} as follows:
\[
\begin{array}{cl}
\displaystyle\operatornamewithlimits{minimize}_{{\bf x}\in \mathbb{X}} &\; f({\bf x})  + \theta_1(g({\bf x}))\\[0.1in] 
\mbox{subject to} &\; h({\bf x})=0, \\ [0.1in]
 &\; g({\bf x})\in {\cal K},
\end{array}
\]
where  the closed convex set ${\cal K}$ is given by \eqref{eq:def-K-set}. In fact, all the subsequent analysis does not require the function $g$ in the objective and constraint to be the same. In order to make the discussions more general, we allow two different continuously differentiable functions $g_1$ and $g_2$ in this problem, i.e., we consider the problem
\begin{equation}\label{opt-eq}
\begin{array}{cl}
\displaystyle\operatornamewithlimits{minimize}_{{\bf x}\in \mathbb{X}} &\; f({\bf x})  + \theta_1(g_1({\bf x}))\\[0.1in] 
\mbox{subject to} &\; h({\bf x})=0, \\ [0.1in]
 &\; g_2({\bf x})\in {\cal K}.
\end{array}
\end{equation}
The Lagrangian function ${\cal L}:\mathbb{X}\times\mathbb{S}^n\times\mathbb{Y}\times\mathbb{S}^n\to \mathbb{R}$ of the above problem can be written as
\[
{\cal L}({\bf x}, {\bf y}, Y, Z):=f({\bf x})+ \langle Y,g_1({\bf x})\rangle+\langle {\bf y},h({\bf x})\rangle  +\langle Z,g_2({\bf x})\rangle, \quad ({\bf x}, {\bf y}, Y, Z)\in \mathbb{X}\times\mathbb{S}^n\times\mathbb{Y}\times\mathbb{S}^n,
\]
yielding the following KKT optimality condition of \eqref{opt-eq}:
\begin{equation}\label{eq:KKT-opt}
	\left\{ 
	\begin{array}{l}
		{\cal L}'_{\bf x}({\bf x}, {\bf y}, Y, Z)=0,\quad h({\bf x})=0, \\ [3pt]
		Y\in \partial\theta_1(g_1({\bf x})),  \quad Z\in {\cal N}_{\cal K}(g_2({\bf x})),
	\end{array}
\right.
\end{equation}
where 
${\cal L}'_{\bf x}({\bf x}, {\bf y}, Y, Z)$ is the partial derivative of ${\cal L}$  with respect to ${\bf x}$. For any $({\bf x},{\bf y},  Y, Z)\in\mathbb{X}\times\mathbb{Y}\times\mathbb{S}^n\times\mathbb{S}^n$ satisfying \eqref{eq:KKT-opt}, we call  ${\bf x}$ a stationary point, $({\bf y}, Y,Z)$ the corresponding multiplier and $({\bf x},{\bf y},Y,Z)$ a KKT point of \eqref{opt-eq}, respectively. We also  use ${\cal M}({\bf x})$ to denote the set of multipliers $({\bf y},Y,Z)$ for any stationary point ${\bf x}$ such that $({\bf x},{\bf y},Y,Z)$ is a KKT point.

The following concept of constraint nondegeneracy for the nonsmooth matrix optimization problem \eqref{opt-eq} is adopted from Robinson \cite{Robinson84}, which reduces to the linear independence constraint qualification for the conventional nonlinear programming problem.

\begin{definition}
The constraint nondegeneracy of problem \eqref{opt-eq} is defined as 
\begin{equation}\label{eq:nondegen}
\left[\begin{array}{c}
h'(\overline{\bf x}) \\ [3pt]
g_1'(\overline{\bf x}) \\ [3pt]
g_2'(\overline{\bf x})
\end{array} \right]\mathbb{X}+\left[\begin{array}{c}
\{0\} \\ [3pt]
{\cal T}^{\rm lin}_{\theta_1}(g_1(\overline{\bf x})) \\ [3pt]
{\rm lin}\left({\cal T}_{\cal K}(g_2(\overline{\bf x}))\right)
\end{array} \right]=\left[\begin{array}{c}
\mathbb{Y}  \\ [3pt]
\mathbb{S}^n \\ [3pt]
\mathbb{S}^n
\end{array} \right],
\end{equation}  
where the affine spaces ${\cal T}^{\rm lin}_{\theta_1}(g_1(\overline{\bf x}))$ and ${\rm lin}\left({\cal T}_{\cal K}(g_2(\overline{\bf x}))\right)$ are defined in \eqref{eq:def-Tlin} and \eqref{eq:def-Tlin2}. 
\end{definition}

Let $\overline{\bf x}\in \mathbb{X}$ be a stationary point of problem \eqref{opt-eq} and $(\overline{\bf y},\overline{Y},\overline{Z})\in {\cal M}(\overline{\bf x})$. Since  ${\cal M}(\overline{\bf x})$ is nonempty, the critical cone of \eqref{opt-eq} can be defined as
\begin{equation}\label{eq:def-critical-MOP}
{\cal C}(\overline{\bf x}):=\left\{{\bf d}\in \mathbb{X} \;\;\bigg|\;\;
\begin{array}{ll}
h'(\overline{\bf x}){\bf d}=0,\;\; g_1'(\overline{\bf x}){\bf d}\in {\cal C}(g_1(\overline{\bf x})+\overline{Y};\partial \theta_1(g_1(\overline{\bf x}))),\\[0.1in]
  g_2'(\overline{\bf x}){\bf d}\in {\cal C}(g_2(\overline{\bf x})+\overline{Z};{\cal N}_{\cal K}(g_2(\overline{\bf x})))
\end{array}
 \right\}.
\end{equation}
where ${\cal C}(g_1(\overline{\bf x})+\overline{Y};\partial \theta_1(g_1(\overline{\bf x})))$ and ${\cal C}(g_2(\overline{\bf x})+\overline{Z};{\cal N}_{\cal K}(g_2(\overline{\bf x})))$ are the critical cones defined in \eqref{eq:def-critical-cone} and \eqref{eq:critical-cone-def}, respectively.

For notationally simplicity,  we define the  outer approximation set to ${\cal C}(\overline{\bf x})$ with respect to $(\overline{\bf y},\overline{Y},\overline{Z})\in {\cal M}(\overline{\bf x})$ as
\begin{equation}\label{eq:def-app}
 {\rm app}(\overline{\bf y},\overline{Y},\overline{Z}):=\left\{
 {\bf d}\in \mathbb{X} \;\;\bigg| \;\;\begin{array}{ll}
 h'(\overline{\bf x}){\bf d}=0,\ g_1'(\overline{\bf x}){\bf d}\in {\rm aff}\left({\cal C}(g_1(\overline{\bf x})+\overline{Y};\partial \theta_1(g(\overline{\bf x})))\right),\\[0.1in]
   g_2'(\overline{\bf x}){\bf d}\in {\rm aff}\left({\cal C}(g_2(\overline{\bf x})+\overline{Z};{\cal N}_{\cal K}(g_2(\overline{\bf x})))\,\right) 
 \end{array}
 \right\}.
\end{equation}
 The following definition of the strong second-order sufficient condition of problem \eqref{opt-eq}  generalizes the concept from the conventional nonlinear programming introduced by Robinson \cite{Robinson80} to the nonsmooth matrix optimization. 
\begin{definition}\label{def:ssoc}
	Let $\overline{\bf x}\in\mathbb{X}$ be a stationary point of the problem \eqref{opt-eq}. We say the strong second-order sufficient condition holds at $\overline{\bf x}$ if 
	\begin{equation}\label{eq:ssoc}
	\begin{array}{ll}
	\displaystyle\sup_{(\overline{\bf y},\overline{Y},\overline{Z})\in{\cal M}(\overline{\bf x})}\left\{\langle {\bf d}, {\cal L}''_{{\bf x}{\bf x}}(\overline{\bf x},\overline{\bf y},\overline{Y},\overline{Z}){\bf d}\rangle -\Upsilon_{g_1(\overline{\bf x})}^1\left(\overline{Y},g_1'(\overline{\bf x}){\bf d}\right) - \Upsilon_{g_2(\overline{\bf x})}^2\left(\overline{Z},g_2'(\overline{\bf x}){\bf d}\right)    \right\} >0, \\[0.15in]
	  \qquad  \qquad  \qquad  \qquad  \qquad \qquad  \qquad  \qquad 	 \qquad	 \qquad\forall\, {\bf d}\in 
	\displaystyle\bigcap_{(\overline{\bf y},\overline{Y},\overline{Z})\in{\cal M}(\overline{\bf x})} {\rm app}(\overline{\bf y},\overline{Y},\overline{Z})\setminus\{0\}.
	\end{array}
	\end{equation}
\end{definition}

Let $\overline{\bf x}$ be a local optimal solution  to \eqref{opt-eq} with ${\cal M}(\overline{\bf x})\neq \emptyset$. Then there exists  $(\overline{\bf y}, \overline{Y},\overline{Z})\in \mathbb{Y} \times \mathbb{S}^n\times\mathbb{S}^n$ such that the KKT condition \eqref{eq:KKT-opt} holds, i.e., $(\overline{\bf x},\overline{\bf y},\overline{Y},\overline{Z})$ is a solution of the following generalized equation:
\begin{equation}\label{eq:GE-KKT}
0\in\left[ 
\begin{array}{c}
{\cal L}'_{\bf x}({\bf x}, {\bf y}, Y, Z) \\ [3pt]
h({\bf x})  \\ [3pt]
-g_1({\bf x}) \\ [3pt]
-g_2({\bf x})
\end{array}
\right] + \left[
\begin{array}{c}
\{0\} \\ [3pt]
\{0\} \\ [3pt]
\partial\theta_1^*(Y) \\ [3pt]
\partial\delta^*_{\cal K}(Z)
\end{array}\right],
\end{equation} 
where $\delta^*_{\cal K}$ is the support function of the nonempty closed convex set ${\cal K}$.
The following concept of strong regularity for a solution of the generalized equation \eqref{eq:GE-KKT} is adapted from Robinson \cite{Robinson80}.

\begin{definition}\label{def:strong-regularity-MCP}
Let $\mathbb{T}\equiv\mathbb{X}\times\mathbb{Y} \times\mathbb{S}^n\times\mathbb{S}^n$. We say that $(\overline{\bf x},\overline{\bf y},\overline{Y},\overline{Z})\in\mathbb{T}$ is a strongly regular solution of the generalized equation \eqref{eq:GE-KKT} if there exist neighborhoods ${\cal U}$ of the origin $0$ and ${\cal V}$ of $(\overline{\bf x},\overline{\bf y},\overline{Y},\overline{Z})$ such that for every $\delta\in{\cal U}$, the following generalized equation
\begin{equation}\label{eq:perturb-ge-equation-MCP}
\delta\in\left[ 
\begin{array}{c}
{\cal L}'_{\bf x}({\bf x}, {\bf y}, Y, Z) \\ [3pt]
h({\bf x})  \\ [3pt]
-g_1({\bf x}) \\ [3pt]
-g_2({\bf x})
\end{array}
\right] + \left[
\begin{array}{c}
\{0\} \\ [3pt]
\{0\} \\ [3pt]
\partial\theta_1^*(Y) \\ [3pt]
\partial\delta^*_{\cal K}(Z)
\end{array}\right]
\end{equation}
has a unique solution in ${\cal V}$, denoted by ${S}_{\cal V}({\delta})$, and the mapping ${S}_{\cal V}:{\cal U}\to{\cal V}$ is Lipschitz continuous.
\end{definition}

In fact, the solution of the generalized equation  \eqref{eq:GE-KKT} can be viewed as the solution of the following nonsmooth equation
\begin{equation}\label{eq:def-nonsmooth-equation-MOP}
F({\bf x}, {\bf y}, Y, Z):=\left[\begin{array}{c}
{\cal L}'_{\bf x}({\bf x}, {\bf y}, Y, Z)\\ [3pt]
h({\bf x})\\ [3pt]
g_1({\bf x})-{\rm Pr}_{\theta_1}(g_1({\bf x})+Y) \\ [3pt]
g_2({\bf x})-\Pi_{\cal K}(g_2({\bf x})+Z)
\end{array}\right]=0,
\end{equation}
 where ${\rm Pr}_{\theta_1}:\mathbb{S}^n\to \mathbb{S}^n$ is the proximal mapping of $\theta_1$ and $\Pi_{\cal K}:\mathbb{S}^n\to \mathbb{S}^n$ is the metric projection onto ${\cal K}$. The function $F$ is said to be a locally Lipschitz homeomorphism near $S$ if there exists an open neighborhood $\mathcal{U}$ such that the restricted mapping $S|_\mathcal{U}:\mathcal{U}\to S(\mathcal{U})$ is Lipschitz continuous and bijective, and its inverse is also Lipschitz continuous. The following result on the relationship between the strong regularity of \eqref{eq:GE-KKT} and the locally Lipschitz homeomorphism of $F$  in \eqref{eq:def-nonsmooth-equation-MOP} can be obtained directly from their definitions. 

\begin{lemma}\label{lem:equivalence-strong-regularity-Lip-Homeomorphism}
 Suppose that   $F(\overline{\bf x},\overline{\bf y},\overline{Y},\overline{Z})=0$. Then $F$ is a locally Lipschitz homeomorphism near $(\overline{\bf x},\overline{\bf y},\overline{Y},\overline{Z})$ if and only if $(\overline{\bf x},\overline{\bf y},\overline{Y},\overline{Z})$ is a strongly regular solution of the generalized equation \eqref{eq:GE-KKT}.
\end{lemma}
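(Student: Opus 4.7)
The plan is to set up a bijective correspondence between perturbations of the generalized equation \eqref{eq:perturb-ge-equation-MCP} and perturbations of the nonsmooth equation $F(z)=\tilde{\delta}$, where $z=({\bf x},{\bf y},Y,Z)$, and then read the equivalence of strong regularity and local Lipschitz homeomorphism off this correspondence. The basic tool is the standard proximal characterization: for any $U\in\mathbb{S}^n$, $Y\in\partial\theta_1(U)$ if and only if $U={\rm Pr}_{\theta_1}(U+Y)$, and analogously $Z\in{\cal N}_{\cal K}(U)$ if and only if $U=\Pi_{\cal K}(U+Z)$. These identities confirm at once that $F(\overline{\bf x},\overline{\bf y},\overline{Y},\overline{Z})=0$ is equivalent to \eqref{eq:GE-KKT}, providing the base case $\delta=0=\tilde{\delta}$.

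For the implication ($\Leftarrow$), I would fix a small $\tilde{\delta}=(\tilde{\delta}_1,\tilde{\delta}_2,\tilde{\delta}_3,\tilde{\delta}_4)$ and seek a unique Lipschitz solution of $F({\bf x},{\bf y},Y,Z)=\tilde{\delta}$ near the reference point. Applying the proximal identity, the third and fourth components of this equation rewrite as $Y+\tilde{\delta}_3\in\partial\theta_1(g_1({\bf x})-\tilde{\delta}_3)$ and $Z+\tilde{\delta}_4\in{\cal N}_{\cal K}(g_2({\bf x})-\tilde{\delta}_4)$. Introducing the shifted multipliers $Y':=Y+\tilde{\delta}_3$ and $Z':=Z+\tilde{\delta}_4$ turns the full system into a generalized equation of the form \eqref{eq:perturb-ge-equation-MCP} in $({\bf x},{\bf y},Y',Z')$ whose perturbation vector is
\[
\bigl(\tilde{\delta}_1+(g_1'({\bf x}))^*\tilde{\delta}_3+(g_2'({\bf x}))^*\tilde{\delta}_4,\ \tilde{\delta}_2,\ -\tilde{\delta}_3,\ -\tilde{\delta}_4\bigr).
\]
Since $g_1$ and $g_2$ are twice continuously differentiable, only the first entry depends on ${\bf x}$, and it does so Lipschitz continuously with a modulus proportional to $\|\tilde{\delta}\|$. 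Composing the Lipschitz solution map $S_{\cal V}$ provided by strong regularity with this perturbation-valued map and invoking the Banach fixed-point principle in ${\bf x}$ for sufficiently small $\|\tilde{\delta}\|$ yields a unique solution of $F(z)=\tilde{\delta}$, depending Lipschitz continuously on $\tilde{\delta}$ by the standard parametric fixed-point estimate.

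The converse direction ($\Rightarrow$) is the mirror image. Given $\delta$ in the perturbed generalized equation, the proximal identity rewrites the third and fourth components as $g_1({\bf x})+\delta_3={\rm Pr}_{\theta_1}(g_1({\bf x})+Y+\delta_3)$ and $g_2({\bf x})+\delta_4=\Pi_{\cal K}(g_2({\bf x})+Z+\delta_4)$. Replacing ${\rm Pr}_{\theta_1}(g_1({\bf x})+Y+\delta_3)$ by ${\rm Pr}_{\theta_1}(g_1({\bf x})+Y)$ introduces an error bounded by $\|\delta_3\|$ thanks to the $1$-Lipschitz property of the proximal mapping, and similarly for $\Pi_{\cal K}$. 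After the shift $(Y,Z)\mapsto(Y-\delta_3,Z-\delta_4)$, the generalized equation takes the form $F(z)=\tau(\delta,z)$ with $\tau$ jointly Lipschitz and satisfying $\|\tau(\delta,z)\|\le C\|\delta\|$ locally. Applying the Lipschitz inverse of $F$ supplied by the local homeomorphism hypothesis and closing a contraction in $z$ produces the unique Lipschitz solution mapping demanded by the definition of strong regularity.

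The main technical hurdle is selecting the shift so that the two families $(\delta,\tilde{\delta})$ become bijectively and Lipschitz-comparably linked; once this is done, the $C^2$-regularity of $g_1$, $g_2$, $h$ ensures that the induced perturbation of the companion problem is a small Lipschitz function of the unknown, and Banach's principle closes the loop. No machinery beyond the definitions and the proximal identities is required, consistent with the paper's claim that the lemma follows directly from their definitions.
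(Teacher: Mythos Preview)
The paper does not supply a proof of this lemma; it simply asserts that the result ``can be obtained directly from their definitions.'' Your proposal therefore goes well beyond what the paper offers, and the overall line of argument---rewrite the perturbed systems via the proximal identities, absorb the perturbation by the affine shift in the multipliers, and close a Banach contraction because the residual ${\bf x}$-dependence carries a factor of $\|\delta\|$---is correct and is the standard way to establish this equivalence.

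Two small points on the $(\Rightarrow)$ direction. First, the shift should be $(Y,Z)\mapsto(Y+\delta_3,\,Z+\delta_4)$, exactly mirroring the $(\Leftarrow)$ direction; after this substitution the third and fourth components of $F$ equal $-\delta_3$ and $-\delta_4$ \emph{exactly}, with no proximal error term, and the only ${\bf x}$-dependent residual appears in the first component through $g_1'({\bf x})^{*}\delta_3+g_2'({\bf x})^{*}\delta_4$. Second, the sentence about ``replacing ${\rm Pr}_{\theta_1}(g_1({\bf x})+Y+\delta_3)$ by ${\rm Pr}_{\theta_1}(g_1({\bf x})+Y)$'' is a red herring: without the shift, the resulting error term has Lipschitz constant in $({\bf x},Y)$ of order one (not of order $\|\delta\|$), so the contraction would fail. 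Once you make the shift as in your $(\Leftarrow)$ argument, the two directions are entirely symmetric and the $1$-Lipschitz bound on the proximal map is not needed at all.
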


Let $\overline{S}=(\overline{\bf x},\overline{\bf y},\overline{Y},\overline{Z})\in\mathbb{T}$ be such that $F(\overline{\bf x},\overline{\bf y},\overline{Y},\overline{Z})=0$. By \cite[Lemma 1]{CSun08}, we know that ${\bm W}\in\partial F(\overline{\bf x},\overline{\bf y},\overline{Y},\overline{Z})$ (respectively, ${\bm W}\in\partial_{B}F(\overline{\bf x},\overline{\bf y},\overline{Y},\overline{Z})$) if and only if there exist ${\cal S}^1\in\partial{\rm Pr}_{\theta_1}(g_1(\overline{\bf x})+\overline{Y})$ (respectively, ${\cal S}^1\in\partial_{B}{\rm Pr}_{\theta_1}(g_1(\overline{\bf x})+\overline{Y})$) and ${\cal S}^2\in\partial\Pi_{\cal K}(g_2(\overline{\bf x})+\overline{Z})$ (respectively, ${\cal S}^2\in\partial_{B}\Pi_{\cal K}(g_2(\overline{\bf x})+\overline{Z})$) such that for any $(\triangle {\bf x} ,\triangle {\bf y}, \triangle Y,\triangle Z)\in\mathbb{T}$,
\begin{equation}\label{eq:def-B-Jocbian-F}
{\bm W}\left(\triangle {\bf x} ,\triangle {\bf y}, \triangle Y,\triangle Z\right)=\left[\begin{array}{c}
{\cal L}^{''}_{{\bf x}{\bf x}}(\overline{\bf x},\overline{\bf y},\overline{Y},\overline{Z})\triangle {\bf x}+h'(\overline{\bf x})^{*}\triangle {\bf y}+g_1'(\overline{\bf x})^*\triangle Y+g_2'(\overline{\bf x})^*\triangle Z\\ [0.1in]
h'(\overline{\bf x})\triangle {\bf x} \\ [0.1in]
g_1'(\overline{\bf x})\triangle {\bf x}-{\cal S}^1(g_1'(\overline{\bf x})\triangle {\bf x}+\triangle Y) \\ [0.1in]
g_2'(\overline{\bf x})\triangle {\bf x}-{\cal S}^2(g_2'(\overline{\bf x})\triangle {\bf x}+\triangle Z)
\end{array}\right].
\end{equation} 

Next, we shall provide the explicit formula of the generalized Jacobian $\partial F(\overline{\bf x},\overline{\bf y},\overline{Y},\overline{Z})$. Let $\overline{Y}\in \partial\theta_1(g_1(\overline{\bf x}))$.
We first consider the characterization of ${\cal S}\in\partial{\rm Pr}_{\theta_1}(g_1(\overline{\bf x})  + \overline{Y})$.  
Recall that the proximal mapping ${\rm Pr}_{\theta_1}:\mathbb{S}^n\to\mathbb{S}^n$ is the spectral operator with respect to the proximal mapping ${\rm Pr}_{\phi_1}:\mathbb{R}^n\to\mathbb{R}^n$, i.e.,
\[
g_1(\overline{\bf x}) = {\rm Pr}_{\theta_1}(g_1(\overline{\bf x})  + \overline{Y})=\overline{U}\,{\rm Diag}({\rm Pr}_{\phi_1}(\lambda(g_1(\overline{\bf x})  + \overline{Y})))\,\overline{U}^\top ,
\]
where $\overline{U}\in\mathbb{O}^n(g_1(\overline{\bf x}))\cap \mathbb{O}^n(\overline{Y})$. To proceed, we denote $
\mathbb{W}^1:=\prod_{l=1}^{r}\mathbb{S}^{|\beta_{1}^l|}\times\ldots\times\mathbb{S}^{|\beta_{s_l}^l|}$ and
the spectral operator with respect to the directional derivative ${\rm Pr}'_{\phi_1}(\lambda(g_1(\overline{\bf x}));\,\bullet\,)$ as
\[\Sigma:=(\Sigma^1_1,\ldots,\Sigma^1_{s^1},\ldots,\Sigma^r_1,\ldots,\Sigma^r_{s^r}):\mathbb{W}^1\to\mathbb{W}^1.
\]
It follows from Proposition \ref{prop:MY-phi-direction-diff} that $\Sigma$ is actually the metric projection operator over the following nonempty closed convex set 
\begin{equation}\label{eq:def-Delta}
\Delta^1:=\left\{W\in\mathbb{W}^1\mid \langle\mu(W),{\bf a}^i\rangle=\langle\mu(W),{\bf a}^j\rangle =\max_{\kappa\in\iota_1(\lambda(g_1(\overline{\bf x})))}\langle\mu(W),{\bf a}^\kappa\rangle,\ \forall\,i,j\in \eta_1(\lambda(g_1(\overline{\bf x})),\lambda(\overline{Y}))\right\},
\end{equation}
where the index set $\eta_1$ is defined in \eqref{eq:def-index-eta-1},  and for any $W=(W^1_1,\ldots,W^1_{s^1},\ldots,W^{r}_1,\ldots,W^{r}_{s^r})\in\mathbb{W}^1$, 
\[
\mu(W):=(\lambda(W^1_1),\ldots,\lambda(W^1_{s^1}),\ldots,\lambda(W^{r}_1),\ldots,\lambda(W^{r}_{s^r})).
\]
Let
\begin{equation}\label{eq:def-DH}
D^1(H):=\left( \overline{U}^\top _{\beta^1_1}H\overline{U}_{\beta^1_1}\,,\,\ldots,\,\overline{U}^\top _{\beta^1_{s^1}}H\overline{U}_{\beta^1_{s^1}},\,\ldots,\,\overline{U}^\top _{\beta^r_1}H\overline{U}_{\beta^r_1}\,,\ldots,\,\overline{U}^\top _{\beta^r_{s^r}}H\overline{U}_{\beta^r_{s^r}}\right)\in \mathbb{W}^1
\end{equation}
and a matrix $\mathcal{A} = (\mathcal{A}_{ij})_{n\times n}$ whose $(i,j)$-th entry is
\begin{equation}\label{eq:A-def}
{\cal A}_{ij}:=\left\{\begin{array}{ll}
\left[\lambda_i(g_1(\overline{\bf x}))-\lambda_j(g_1(\overline{\bf x}))\right]/\left[\lambda_i(g_1(\overline{\bf x}) + \overline{Y})-\lambda_j(g_1(\overline{\bf x}) + \overline{Y})\right] & \; \mbox{if $\lambda_i(g_1(\overline{\bf x}) + \overline{Y})\neq\lambda_j(g_1(\overline{\bf x}) + \overline{Y})$,} \\ [3pt]
0 & \; \mbox{otherwise.}
\end{array}
\right.
\end{equation}
 Since ${\rm Pr}_{\phi_1}$ is globally Lipchitz continuous and directionally differentiable at $\lambda(g_1(\overline{\bf x}) + \overline{Y})$, we know from \cite[Remark 1 and Theorem 6]{DSSToh14} that ${\rm Pr}_{\theta_1}$ is directionally differentiable at $g_1(\overline{\bf x}) + \overline{Y}$ and the directional derivative ${\rm Pr}_{\theta_1}'(g_1(\overline{\bf x}) + \overline{Y};H)$ at $g_1(\overline{\bf x}) + \overline{Y}$ along $H\in\mathbb{S}^n$ is given by
\[
{\rm Pr}_{\theta_1}'(g_1(\overline{\bf x}) + \overline{Y};H)=\overline{U}\,{\rm Pr}_{\phi_1}^{[1]}(g_1(\overline{\bf x}) + \overline{Y};H)\overline{U}^\top ,
\]
where ${\rm Pr}_{\phi_1}^{[1]}(g_1(\overline{\bf x}) + \overline{Y};H)$ is the first divided directional difference of ${\rm Pr}_{\phi_1}$ at $g_1(\overline{\bf x}) + \overline{Y}$ along $H$ with the expression
\[
{\rm Pr}_{\phi_1}^{[1]}(g_1(\overline{\bf x}) + \overline{Y};H):={\cal A}\circ\left(\overline{U}^\top H\overline{U}\right)+{\rm Diag}\left(\Sigma^1_1(D^1(H)),\ldots,\Sigma^r_{s^r}(D^1(H)) \right)\in\mathbb{S}^n.
\]
Finally, since ${\rm Pr}_{\phi_1}$ is piecewise affine, we know from \cite[Section 7.3]{BCShapiro98} that 
\[
{\rm Pr}_{\phi_1}({\bf x}+{\bf h})-{\rm Pr}_{\phi_1}({\bf x})={\rm Pr}_{\phi_1}'({\bf x};{\bf h}),\quad \forall\,{\bf x},{\bf h}\in\mathbb{R}^n.
\]
Thus, it follows from \cite[Theorem 7.8]{DSSToh18} that 
\[
\partial{\rm Pr}_{\theta_1}(g_1(\overline{\bf x}) + \overline{Y})=\partial\Psi(0),
\]
where $\Psi:={\rm Pr}'_{\theta_1}(g_1(\overline{\bf x}) + \overline{Y};\,\bullet\,):\mathbb{S}^n\to\mathbb{S}^n$ is the directional derivative of ${\rm Pr}_{\theta_1}$ at $g_1(\overline{\bf x}) + \overline{Y}$. Based on the above discussions, we obtain the following result on the characterization of $\partial{\rm Pr}_{\theta_1}(g_1(\overline{\bf x}) + \overline{Y})$.

\begin{lemma}\label{prop:subdiff-theta}
Let $\overline{Y}\in \partial\theta_1(g_1(\overline{\bf x}))$ and $\overline{U}\in \mathbb{O}^n(g_1(\overline{\bf x}))\cap \mathbb{O}^n(\overline{Y})$.
	It holds that ${\cal S}\in \partial{\rm Pr}_{\theta_1}(g_1(\overline{\bf x}) + \overline{Y})$ if and only if there exists ${\cal U}:=({\cal U}^1_1,\ldots,{\cal U}^1_{s^1},\ldots,{\cal U}^r_1,\ldots,{\cal U}^r_{s^r})\in \partial\Pi_{\Delta^1}(0)$ such that for any $H\in\mathbb{S}^n$,
	\[
	{\cal S}(H)=\overline{U}\left[{\cal A}\circ \left(\overline{U}^\top H\overline{U}\right)\right]\overline{U}^\top+\overline{U}\, {\rm Diag}\left({\cal U}^1_1(D^1(H)),\ldots,{\cal U}^r_{s^r}(D^1(H))\right)\overline{U}^\top ,
	\]
	where $\Delta^1$ is the nonempty convex set defined by \eqref{eq:def-Delta} and $\Pi_{\Delta^1}$ denotes the metric projection onto $\Delta^1$, $D^1(H)\in \mathbb{W}^1$ and $\mathcal{A} = (\mathcal{A}_{ij})_{n\times n}$ are defined in \eqref{eq:def-DH} and \eqref{eq:A-def}, respectively.
\end{lemma}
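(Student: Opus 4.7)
The plan is to deduce the formula by combining the facts that (i) the subdifferential of $\mathrm{Pr}_{\theta_1}$ at the base point coincides with the Clarke subdifferential of the directional derivative mapping $\Psi = \mathrm{Pr}_{\theta_1}'(g_1(\overline{\mathbf x})+\overline{Y};\,\bullet\,)$ at the origin, and (ii) $\Psi$ has an explicit decomposition as a sum of a Hadamard--type linear part and a ``block'' part given by a composition of the spectral operator $\Sigma$ with the linear map $D^1$. Both ingredients are already laid out in the paragraphs immediately preceding the lemma, so the argument reduces to a careful computation of $\partial \Psi(0)$.

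First, I would recall from the preceding discussion that since $\mathrm{Pr}_{\phi_1}$ is piecewise affine, \cite[Theorem 7.8]{DSSToh18} yields $\partial \mathrm{Pr}_{\theta_1}(g_1(\overline{\mathbf x})+\overline{Y}) = \partial \Psi(0)$, where
\[
\Psi(H) \;=\; \overline{U}\bigl[{\cal A}\circ(\overline{U}^\top H\overline{U})\bigr]\overline{U}^\top \;+\; \overline{U}\,\mathrm{Diag}\bigl(\Sigma^1_1(D^1(H)),\ldots,\Sigma^r_{s^r}(D^1(H))\bigr)\overline{U}^\top.
\]
The first summand is linear in $H$, so it contributes deterministically to every element of $\partial \Psi(0)$. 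For the second summand, I would invoke the sum rule and chain rule for Clarke's generalized Jacobian (applicable because $H\mapsto D^1(H)$ is linear and $\Sigma$ is globally Lipschitz continuous as the metric projection onto $\Delta^1$), together with Proposition~\ref{prop:MY-phi-direction-diff}(i) which identifies $\Sigma$ with $\Pi_{\Delta^1}$.

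Next, I would note that $D^1(0)=0$ and $\Sigma(0) = \Pi_{\Delta^1}(0)$, so by the chain rule for Clarke subdifferentials the second term contributes exactly
\[
\overline{U}\,\mathrm{Diag}\bigl({\cal U}^1_1(D^1(H)),\ldots,{\cal U}^r_{s^r}(D^1(H))\bigr)\overline{U}^\top, \qquad {\cal U}\in\partial\Pi_{\Delta^1}(0),
\]
and moreover every such element arises this way. Combining with the deterministic linear term yields both inclusions in the claimed formula.

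The main obstacle is justifying the chain rule step rigorously: one needs the equality (not just inclusion) $\partial(\Sigma\circ D^1)(0)=\partial\Sigma(0)\circ D^1$, which holds here because $D^1$ is linear and surjective onto $\mathbb{W}^1$ (each block $\overline{U}_{\beta^l_k}^\top H\overline{U}_{\beta^l_k}$ can be prescribed independently by choosing $H$ appropriately), so the transversality needed to upgrade the chain rule from ``$\subseteq$'' to ``$=$'' is automatic. Once this surjectivity is observed, and Proposition~\ref{prop:MY-phi-direction-diff}(i) is invoked to equate $\Sigma$ with $\Pi_{\Delta^1}$, the stated characterization follows directly. The remainder of the proof is bookkeeping to verify that the Hadamard term with the divided difference matrix ${\cal A}$ is indeed the linear part of $\Psi$, which is immediate from the formula for $\mathrm{Pr}_{\phi_1}^{[1]}$ displayed just before the lemma.
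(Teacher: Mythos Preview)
Your proposal is correct and follows essentially the same route as the paper. The paper does not give a separate proof of this lemma; it presents the result as a direct consequence of the discussion immediately preceding it, namely the identity $\partial\mathrm{Pr}_{\theta_1}(g_1(\overline{\mathbf x})+\overline{Y})=\partial\Psi(0)$ from \cite[Theorem 7.8]{DSSToh18}, the explicit first-divided-difference formula for $\Psi$, and the identification of $\Sigma$ with $\Pi_{\Delta^1}$. Your argument simply makes explicit the one remaining step the paper leaves implicit --- computing $\partial\Psi(0)$ via the smooth-plus-Lipschitz sum rule and the chain rule with the linear surjection $D^1$ and the linear injection $H\mapsto \overline{U}\,\mathrm{Diag}(\cdot)\,\overline{U}^\top$ --- and your justification of the equality (rather than mere inclusion) in the chain rule via surjectivity of $D^1$ is the right one.
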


Similarly, for $\overline{Z}\in {\cal N}_{\cal K}(g_2(\overline{\bf x}))$ and $\overline{V}\in \mathbb{O}^n(g_2(\overline{\bf x}))\cap \mathbb{O}^n(\overline{Z})$, we denote $\mathbb{W}^2:=\prod_{l=1}^{r}\mathbb{S}^{|\gamma_{1}^l|}\times\ldots\times\mathbb{S}^{|\gamma_{t_l}^l|}$ and a matrix $\mathcal{B} = (\mathcal{B}_{ij})_{n\times n}$ whose $(i,j)$-th entry is given by
\[
\left\{\begin{array}{ll}
{\cal B}_{ij}:=\left\{\begin{array}{ll}
(\lambda_i(g_2(\overline{\bf x}))-\lambda_j(g_2(\overline{\bf x})))/(\lambda_i(g_2(\overline{\bf x}) + \overline{Z})-\lambda_j(g_2(\overline{\bf x}) + \overline{Z})) & \; \mbox{if $\lambda_i(g_2(\overline{\bf x}) + \overline{Z})\neq\lambda_j(g_2(\overline{\bf x}) + \overline{Z})$,} \\ [3pt]
0 & \; \mbox{otherwise}.
\end{array}
\right.\\[0.2in]
D^2(H):=\left( \overline{V}^\top _{\gamma^1_1}H\overline{V}_{\gamma^1_1},\ldots,\overline{V}^\top _{\gamma^1_{t^1}}H\overline{V}_{\gamma^1_{t^1}},\ldots,\overline{V}^\top _{\gamma^r_1}H\overline{V}_{\gamma^r_1},\ldots,\overline{V}^\top _{\gamma^r_{t^r}}H\overline{V}_{\gamma^r_{t^r}}\right)\in \mathbb{W}^2,\\[0.15in]
\Delta^2:=\left\{W\in\mathbb{W}^2\mid 0=\langle\nu(W),{\bf b}^i\rangle=\displaystyle\max_{j\in\iota_2(\lambda(g_2(\overline{\bf x})))}\langle\nu(W),{\bf b}^j\rangle,\ \forall\,i\in \eta_2\left(\lambda(g_2(\overline{\bf x})),\lambda(\overline{Z})\right)\right\},
\end{array}\right.
\]
where the index set $\eta_2(\lambda(g_2(\overline{\bf x})),\lambda(\overline{Y}))$ is defined in \eqref{eq:def-index-eta-1} with respect to $\lambda(g_2(\overline{\bf x}))$ and $\lambda(\overline{Z})$,  and for any $W=(W^1_1,\ldots,W^1_{t^1},\ldots,W^{r}_1,\ldots,W^{r}_{t^r})\in\mathbb{W}^2$, 
\[
\nu(W):=(\lambda(W^1_1),\ldots,\lambda(W^1_{t^1}),\ldots,\lambda(W^{r}_1),\ldots,\lambda(W^{r}_{t^r})).
\]
We have the following characterization of $\partial\Pi_{\cal K}(g_2(\overline{\bf{x}}) + \overline{Z})$. 
\begin{lemma}\label{prop:subdiff-theta2}
Let $\overline{Z}\in {\cal N}_{\cal K}(g_2(\overline{\bf x}))$ and $\overline{V}\in \mathbb{O}^n(g_2(\overline{\bf x}))\cap \mathbb{O}^n(\overline{Z})$.
It holds that	${\cal S}\in\partial\Pi_{\cal K}(g_2(\overline{\bf{x}}) + \overline{Z})$ if and only if there exists ${\cal V}:=({\cal V}^1_1,\ldots,{\cal V}^1_{t^1},\ldots,{\cal V}^r_1,\ldots,{\cal V}^r_{t^r})\in \partial\Pi_{\Delta^2}(0)$ such that for any $H\in\mathbb{S}^n$,
\[
	{\cal S}(H)=\overline{V}\left[\,{\cal B}\circ \left(\,\overline{V}^\top H\overline{V}\,\right)\,\right]\overline{V}^\top +\overline{V}\,{\rm Diag}\left({\cal V}^1_1(D^2(H)),\ldots,{\cal V}^r_{t^r}(D^2(H))\right)\overline{V}^\top.
\]

\end{lemma}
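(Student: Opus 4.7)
The plan is to mirror the proof of Lemma \ref{prop:subdiff-theta}, exploiting the identity $\Pi_{\cal K} = {\rm Pr}_{\theta_2}$ for the spectral function $\theta_2 = \phi_2 \circ \lambda$ with $\phi_2 = \delta_{{\rm dom}\,\phi}$. By Remark \ref{remark:symmetric basis}, $\phi_2$ is symmetric, and the basis $\{{\bf b}^i\}_{i=1}^q$ of $\phi_2$ is at hand from \eqref{eq:domphi}, so the spectral-operator framework of \cite{DSSToh14,DSSToh18} applies verbatim to ${\rm Pr}_{\phi_2} = \Pi_{{\rm dom}\,\phi}$, which is globally Lipschitz and piecewise affine.

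Using the spectral representation $\Pi_{\cal K}(Z) = \overline{V}\,{\rm Diag}(\Pi_{{\rm dom}\,\phi}(\lambda(Z)))\,\overline{V}^\top$ for any $\overline{V}\in \mathbb{O}^n(Z)$, I would invoke \cite[Remark 1 and Theorem 6]{DSSToh14} at the point $g_2(\overline{\bf x}) + \overline{Z}$ to get directional differentiability of $\Pi_{\cal K}$ with
\[
\Pi_{\cal K}'(g_2(\overline{\bf x}) + \overline{Z}; H) = \overline{V}\left[{\cal B} \circ (\overline{V}^\top H \overline{V})\right]\overline{V}^\top + \overline{V}\,{\rm Diag}\!\left(\widetilde{\Sigma}^1_1(D^2(H)),\ldots,\widetilde{\Sigma}^r_{t^r}(D^2(H))\right)\overline{V}^\top,
\]
where the divided-difference matrix $\cal B$ encodes the off-block contributions and $\widetilde{\Sigma} := (\widetilde{\Sigma}^l_k)$ denotes the spectral operator on $\mathbb{W}^2$ associated with $\Pi_{{\rm dom}\,\phi}'(\lambda(g_2(\overline{\bf x}) + \overline{Z}); \,\bullet\,)$.

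The next step is to identify $\widetilde{\Sigma} = \Pi_{\Delta^2}$. Proposition \ref{prop:MY-phi-direction-diff}(i) identifies $\Pi_{{\rm dom}\,\phi}'(\lambda(g_2(\overline{\bf x}) + \overline{Z}); \,\bullet\,)$ with the metric projection onto the critical cone ${\cal C}(\lambda(g_2(\overline{\bf x}) + \overline{Z});\partial\phi_2(\lambda(g_2(\overline{\bf x}))))$. Translating this critical cone into a block-wise condition on $\mathbb{W}^2$ relies on the polyhedral description \eqref{subgradient} together with Corollary \ref{remark:Ix}(i) applied to the permutation invariance of the ${\bf b}^i$'s on each common-eigenvalue block $\gamma_k^l$; the resulting constraints match precisely the defining relations of $\Delta^2$, whence $\widetilde{\Sigma} = \Pi_{\Delta^2}$.

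Finally, since $\Pi_{{\rm dom}\,\phi}$ is piecewise affine, \cite[Section 7.3]{BCShapiro98} gives $\Pi_{{\rm dom}\,\phi}(x+h) - \Pi_{{\rm dom}\,\phi}(x) = \Pi_{{\rm dom}\,\phi}'(x;h)$ locally, and the analogous identity for $\Pi_{\cal K}$ near $g_2(\overline{\bf x}) + \overline{Z}$ follows via the spectral representation. An appeal to \cite[Theorem 7.8]{DSSToh18} then yields $\partial \Pi_{\cal K}(g_2(\overline{\bf x}) + \overline{Z}) = \partial \Psi(0)$, where $\Psi := \Pi_{\cal K}'(g_2(\overline{\bf x}) + \overline{Z}; \,\bullet\,)$. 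Taking the generalized Jacobian at $0$ and using the linearity of the $\cal B$-Hadamard term gives the advertised formula with ${\cal V} \in \partial\Pi_{\Delta^2}(0)$. The main technical obstacle will be the block-wise identification $\widetilde{\Sigma} = \Pi_{\Delta^2}$: one must keep careful track of the partition $\{\gamma_k^l\}$ and of the role of the symmetry of $\phi_2$ to confirm that the critical-cone constraints on $\mathbb{W}^2$ coincide, after spectral reduction, with the linear equalities defining $\Delta^2$.
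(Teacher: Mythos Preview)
Your proposal is correct and follows essentially the same approach as the paper: the paper does not give a separate proof of Lemma~\ref{prop:subdiff-theta2} but states explicitly that the analysis of $\theta_2$ is analogous to that of $\theta_1$, so the argument preceding Lemma~\ref{prop:subdiff-theta} (spectral-operator representation, directional differentiability via \cite[Remark~1 and Theorem~6]{DSSToh14}, identification of the inner operator with $\Pi_{\Delta^2}$ via Proposition~\ref{prop:MY-phi-direction-diff}, piecewise affinity from \cite[Section~7.3]{BCShapiro98}, and $\partial\Pi_{\cal K}=\partial\Psi(0)$ from \cite[Theorem~7.8]{DSSToh18}) is exactly what you outline.
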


By comparing  the characterizations of Clarke's generalized Jacobian of the proximal mapping  ${\rm Pr}_{\theta_1}$ in Lemma \ref{prop:subdiff-theta} with  ${\rm aff}\left({\cal C}(g_1(\overline{\bf x})+\overline{Y};\partial\theta_1(g_1(\overline{\bf x})))\right)$ in Proposition \ref{prop:critical_cone_aff}, we derive the following lemma.

\begin{lemma}\label{lemma:V-in-affC}
	Suppose that $\overline{Y}\in\partial\theta_1(g_1(\overline{\bf x}))$ with $\overline{U}\in \mathbb{O}^n(g_1(\overline{\bf x}))\cap \mathbb{O}^n(\overline{Y})$ and ${\cal S}\in\partial{\rm Pr}_{\theta_1}(g_1(\overline{\bf x})+\overline{Y})$.  Then
\begin{equation}\label{SH in affine hull}
	{\cal S}(H)\in {\rm aff}\left({\cal C}(g_1(\overline{\bf x})+\overline{Y};\partial\theta_1(g_1(\overline{\bf x})))\right),\quad \forall\, H\in\mathbb{S}^n.
\end{equation}
In addition, if  ${\cal S}(H)=0$ for some $H\in\mathbb{S}^n$, then the following two conditions hold:
	\begin{itemize}
		\item[(i)] the matrix $\overline{U}^\top H\overline{U}\in \mathbb{S}^n$ has the following block diagonal structure:
	\[
	\overline{U}^\top H\overline{U}= {\rm Diag}\left(\overline{U}^\top _{\alpha^1}H\overline{U}_{\alpha^1},\cdots, \overline{U}^\top _{\alpha^r}H\overline{U}_{\alpha^r}\right);
\]
	\item[(ii)] for $l=1,\ldots,r$, if $k\in {\cal E}^l$, then there exists $\{\kappa_{ij}\in \mathbb{R}\}_{i,j\in \iota_1(\lambda(g_1(\overline{\bf{x}})))}$ such that  
	\[
	{\rm tr}\left(\overline{U}^\top _{\beta^l_k}H\overline{U}_{\beta^l_k}\right)=\sum_{i,j\in \iota_1(\lambda(g_1(\overline{\bf{x}})))} \kappa_{ij}\left\langle \, {\bf e}_{|\beta^l_k|}\, ,  \, ({\bf a}^i-{\bf a}^j)_{\beta^l_k}\right\rangle;
	\] 
	otherwise if $k\notin {\cal E}^l$, then $\overline{U}^\top _{\beta^l_k}H\overline{U}_{\beta^l_k}=0$, where  the index set ${\cal E}^l$ is defined by \eqref{def-E-index}.
	\end{itemize}

\end{lemma}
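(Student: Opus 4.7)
The plan is to reduce the claim to the characterization of $\mathrm{aff}(\mathcal{C}(g_1(\overline{\bf x})+\overline{Y};\partial\theta_1(g_1(\overline{\bf x}))))$ provided by Proposition \ref{prop:critical_cone_aff}, using the explicit formula for $\mathcal{S}\in\partial\mathrm{Pr}_{\theta_1}(g_1(\overline{\bf x})+\overline{Y})$ from Lemma \ref{prop:subdiff-theta}. I would first expand
\[\overline{U}^\top\mathcal{S}(H)\overline{U}=\mathcal{A}\circ(\overline{U}^\top H\overline{U})+{\rm Diag}(\mathcal{U}^1_1(D^1(H)),\ldots,\mathcal{U}^r_{s^r}(D^1(H)))\]
for some $\mathcal{U}\in\partial\Pi_{\Delta^1}(0)$, and observe that by \eqref{eq:A-def} the numerator $\lambda_i(g_1(\overline{\bf x}))-\lambda_j(g_1(\overline{\bf x}))$ vanishes whenever $i,j\in\alpha^l$, so $\mathcal{A}_{\alpha^l\alpha^l}\equiv 0$. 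Consequently, the $(\alpha^l,\alpha^l)$ diagonal block of $\overline{U}^\top\mathcal{S}(H)\overline{U}$ equals ${\rm Diag}(\mathcal{U}^l_1(D^1(H)),\ldots,\mathcal{U}^l_{s_l}(D^1(H)))$, which is $\beta^l_k$-block-diagonal, establishing condition (i) of Proposition \ref{prop:critical_cone-eq}.

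The remaining conditions (iii) of Proposition \ref{prop:critical_cone-eq} and \eqref{eq:Condition iii-aff} are derived from the fact that any Clarke generalized Jacobian element $\mathcal{U}\in\partial\Pi_{\Delta^1}(0)$ maps $\mathbb{W}^1$ into $\mathrm{aff}(\Delta^1)$ (or, more precisely, into the affine hull of a face of $\Delta^1$ containing the origin). Decoding \eqref{eq:def-Delta}, I would use the permutation symmetry of $\Delta^1$ on the $\beta^l_k$ blocks, afforded by Corollary \ref{remark:Ix}(ii), to force $\mathcal{U}^l_k(D^1(H))=\rho^l_k I_{|\beta^l_k|}$ for $k\in\mathcal{E}^l$, yielding condition (iii); and the linear equalities $\langle\mu(W),{\bf a}^i\rangle=\langle\mu(W),{\bf a}^j\rangle$ for $i,j\in\eta_1$, valid on $\mathrm{aff}(\Delta^1)$, to produce \eqref{eq:Condition iii-aff}, after noting that $\mathcal{A}_{ii}=0$ makes $\mathrm{diag}(\overline{U}^\top\mathcal{S}(H)\overline{U})$ originate only from the block-diagonal ${\rm Diag}$ part.

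For the case $\mathcal{S}(H)=0$, statement (i) of the lemma follows from an off-diagonal analysis. Since $\lambda(g_1(\overline{\bf x}))$ and $\lambda(\overline{Y})$ are both non-increasing and $\overline{U}$ is their common diagonalizer, their entrywise sum is also non-increasing, so $\lambda_i(g_1(\overline{\bf x})+\overline{Y})=\lambda_i(g_1(\overline{\bf x}))+\lambda_i(\overline{Y})$; for $i\in\alpha^l,j\in\alpha^{l'}$ with $l<l'$, the denominator in $\mathcal{A}_{ij}$ equals $(v_l-v_{l'})+(\lambda_i(\overline{Y})-\lambda_j(\overline{Y}))>0$ and the numerator $v_l-v_{l'}\neq 0$, so $\mathcal{A}_{ij}\neq 0$, and vanishing of the $(\alpha^l,\alpha^{l'})$ block of $\overline{U}^\top\mathcal{S}(H)\overline{U}$ forces $\overline{U}_{\alpha^l}^\top H\overline{U}_{\alpha^{l'}}=0$. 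Vanishing of the $(\alpha^l,\alpha^l)$ diagonal block then reduces to $\mathcal{U}^l_k(D^1(H))=0$ for each $(l,k)$. For $k\notin\mathcal{E}^l$, the constraints of $\Delta^1$ involve the $\beta^l_k$-component only through its trace, so $\mathcal{U}^l_k$ acts as the identity on the traceless subspace, forcing $\overline{U}_{\beta^l_k}^\top H\overline{U}_{\beta^l_k}=0$; for $k\in\mathcal{E}^l$, reading off the scalar coefficients of the affine equalities in $\Delta^1$ active at the origin produces the claimed trace expression in terms of $\{\langle{\bf e}_{|\beta^l_k|},({\bf a}^i-{\bf a}^j)_{\beta^l_k}\rangle\}_{i,j\in\iota_1}$.

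The main obstacle is the block-wise dissection of $\partial\Pi_{\Delta^1}(0)$: since $\Delta^1$ is polyhedral in eigenvalue space but carries an orthogonal invariance on each block, its Clarke Jacobian at the origin must be described simultaneously through the active affine constraints in the eigenvalue variables and through invariance under block permutations and rotations. It is this joint description that underlies both the scalar-multiple-of-identity conclusion for $k\in\mathcal{E}^l$ and the identity-on-traceless-part conclusion for $k\notin\mathcal{E}^l$, and making it rigorous is the technical heart of the proof.
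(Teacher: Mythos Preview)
Your overall architecture matches the paper's: verify the three conditions of Proposition~\ref{prop:critical_cone_aff} for $\mathcal S(H)$ via Lemma~\ref{prop:subdiff-theta}, use $\mathcal A_{\alpha^l\alpha^l}=0$ to get the $\beta$-block structure, and for $\mathcal S(H)=0$ use $\mathcal A_{\alpha^l\alpha^{l'}}\neq 0$ off-diagonally. These parts are correct and essentially identical to the paper.

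The genuine gap is in your treatment of the diagonal blocks $\mathcal U^l_k(D^1(H))$. Your two proposed mechanisms both fall short. First, the statement ``$\mathcal U\in\partial\Pi_{\Delta^1}(0)$ maps into $\mathrm{aff}(\Delta^1)$'' does not decode block-wise: $\Delta^1$ is defined through the \emph{nonlinear} eigenvalue map $\mu(W)$, so knowing $\mathcal U(D^1(H))\in\mathrm{aff}(\Delta^1)$ does not directly yield the linear equalities $\langle\mathrm{diag}(\overline U^\top\mathcal S(H)\overline U),{\bf a}^i-{\bf a}^j\rangle=0$, nor does it isolate the action on a single $\beta^l_k$ block. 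Second, the symmetry argument via Corollary~\ref{remark:Ix}(ii) forces the \emph{set} $\partial\Pi_{\Delta^1}(0)$ to be invariant under block-orthogonal conjugation, but this does not force any \emph{individual} element $\mathcal U$ to commute with such conjugations, so you cannot conclude $\mathcal U^l_k(D^1(H))=\rho^l_k I$ for $k\in\mathcal E^l$ from symmetry alone. Likewise, for $k\notin\mathcal E^l$ the claim ``$\mathcal U^l_k$ acts as the identity on the traceless part'' is not a consequence of the constraints of $\Delta^1$ involving only the trace on that block --- those constraints are on eigenvalues, and the max over $\iota_1\setminus\eta_1$ can introduce further structure.

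The paper closes this gap by an explicit limiting computation rather than abstract properties: using Warga's theorem it writes each $\mathcal U\in\partial\Pi_{\Delta^1}(0)$ as a convex combination of limits $\lim_{W^q\to 0}\Pi'_{\Delta^1}(W^q)$ along points with distinct block eigenvalues; at such $W^q$, Proposition~\ref{prop:MY-phi-direction-diff}(ii) gives $\pi'(\mu^q)$ explicitly (projection onto the subspace $\{\langle\cdot,{\bf a}^i-{\bf a}^j\rangle=0,\ i,j\in\iota_1(\lambda(X^q))\}$), and the spectral-operator derivative formula from \cite{DSSToh14} then yields $(\Omega^l_k)^q=\rho^l_k I$ when $k\in\mathcal E^l$ and $(\Omega^l_k)^q=\widetilde H_{\beta^l_k\beta^l_k}$ when $k\notin\mathcal E^l$. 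Passing to the limit and taking convex combinations gives both conditions (iii) and \eqref{eq:Condition iii-aff}, and, under $\mathcal S(H)=0$, the trace formula in (ii). You correctly flagged this block dissection as the technical heart; what is missing is precisely this explicit route through differentiable points, which the abstract symmetry/affine-hull heuristics cannot replace.
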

\begin{proof} 
To prove the inclusion \eqref{SH in affine hull} in this lemma, it suffices to check the three conditions in Proposition \ref{prop:critical_cone_aff} hold.
	For any given ${\cal S}\in\partial{\rm Pr}_{\theta_1}(g_1(\overline{\bf x})+\overline{Y})$ and $H\in\mathbb{S}^n$,  we obtain from Lemma \ref{prop:subdiff-theta} that  for each $l\in\{1,\ldots,r\}$, there exists ${\cal U}=({\cal U}^1_1,\ldots,{\cal U}^r_{s^r})\in \partial\Pi_{\Delta^1}(0)$ such that
	\[
	\overline{U}_{\alpha^l}^\top {\cal S}(H)\overline{U}_{\alpha^l}={\cal A}_{\alpha^l\alpha^l}\circ \left(\overline{U}_{\alpha^l}^\top H\overline{U}_{\alpha^l}\right)+{\rm Diag}\left(\,{\cal U}^l_1(D^1(H)),\ldots,{\cal U}^l_{s_l}(D^1(H))\right).
	\]
	For each $l\in\{1,\ldots,r\}$,	since $\lambda_i(g_1(\overline{\bf x}))=\lambda_j(g_1(\overline{\bf x}))$ for any $i,j\in \alpha^l$, it follows from \eqref{eq:A-def} that ${\cal A}_{\alpha^l\alpha^l}=0$, which implies that the condition (i) in Proposition \ref{prop:critical_cone_aff} holds. 
	
	Let ${\cal D}_{\Pi_{\Delta^1}}\subseteq\mathbb{W}^1$ be the set of all points at which $\Pi_{\Delta^1}$ is differentiable. We  define 
	\[
	\daleth:=\left\{W\in\mathbb{W}^1\mid \mbox{for each $l\in\{1,\ldots,r\}$ and $k\in\{1,\ldots,s_l\}$, the eigenvalues of $W^l_{k}$ are distinct}\right\}.
	\]
	Since $\mathbb{W}^1\setminus\daleth$ has measure zero (in the sense of Lebesgue), we know from \cite[Theorem 4]{Warga81} that 
	\begin{equation}\label{eq:conv hull}
	\partial\Pi_{\Delta^1}(0)={\rm conv} \left\{\, \lim_{W\to 0}\Pi'_{\Delta^1}(W)\mid W\in {\cal D}_{\Pi_{\Delta^1}}\cap \daleth\right\}.
	\end{equation}	
	Then for any $\Theta\in \left\{ \displaystyle\lim_{W\to 0}\Pi'_{\Delta^1}(W)\mid W\in {\cal D}_{\Pi_{\Delta^1}}\cap \daleth\right\}$, there exits a sequence 
	$$
	\left\{W^{q}:=\left((W^1_{1})^{q},\ldots,(W^{s^1}_{1})^{q},\ldots,(W^1_{r})^{q},\ldots,(W^{s^r}_{r})^{q}\right)\right\}\subseteq {\cal D}_{\Pi_{\Delta^1}}\cap \daleth
	$$
	 converging to $0\in \mathbb{W}$ such that for any $H\in\mathbb{S}^n$,
	\[
	\Theta(D^1(H))=\lim_{q\to \infty}\Pi'_{\Delta^1}(W^q)(D^1(H)).
	\]
	Consider any fixed $l=1,\ldots,r$ and $k=1,\ldots,s_l$.
	For each $q$, assume that $(W^l_{k})^{q}$ admits the eigenvalue decomposition  
	$$
	(W^l_{k})^{q}=(R^l_{k})^{q}\Lambda((W^l_{k})^{q})((R^l_{k})^{q})^\top  \quad  {\rm with} \quad (R^l_{k})^{q}\in\mathbb{O}^{|\beta_k^l|}.
	$$ 
	Notice that $\Pi_{\Delta^1}:\mathbb{W}^1\to \mathbb{W}^1$ is the spectral operator with respect to the symmetric mapping  $\pi:={\rm Pr}'_{\phi_1}(\lambda(g_1(\overline{\bf x}));\,\bullet\,)$ defined by \eqref{eq:dir-diff-phi1}. Thus, we know from \cite[Theorem 7]{DSSToh14} that $\Pi_{\Delta^1}$ is differentiable at $W^q$ if and only if $\pi$ is differentiable at 
	$$
	\mu^q:=\mu(W^q)=(\lambda((W^1_1)^q),\ldots,\lambda((W^r_{s^r})^q)).
	$$ 
	For each $l\in\{1,\ldots,r\}$ and $k\in\{1,\ldots,s_l\}$, denote
	\begin{equation}\label{defn:Ahomega}
	\left\{\begin{array}{ll}
	({\cal A}^l_{k}(\mu^q))_{ij}=\left\{
	\begin{array}{ll}
	\displaystyle{\frac{(\pi^l_k(\mu^q))_i-(\pi^l_k(\mu^q))_j}{\lambda_i((W^l_{k})^{q})-\lambda_j((W^l_{k})^{q})}} & \epc\mbox{if $i\neq j$} \\ [0.15in]
	0 & \epc\mbox{otherwise} 
	\end{array}
	\right. \quad \forall\,i,j\in\left\{\, 1,\ldots,|\beta^l_k| \,\right\},\\[0.25in]
	
	h^q=\left( {\rm diag}\big(\,((R^1_{1})^{q})^\top  \widetilde{H}_{\beta^1_1\beta^1_1})(R^1_{1})^{q} \,\big),\ldots,{\rm diag}\big( ((R^r_{s^r})^{q})^\top \widetilde{H}_{\beta^r_{s^r}\beta^r_{s^r}}(R^r_{s^r})^{q} \big)\right),\\[0.15in]
	\widetilde{H}=\overline{U}^\top H\overline{U}, \\[0.1in]
	(\Omega^l_{k})^q=(R^l_{k})^{q} \left[{\cal A}^l_{k}(\mu^q)\circ \left[\,((R^l_{k})^{q})^\top \widetilde{H}_{\beta^l_k\beta^l_k}(R^l_{k})^{q}\,\right]+{\rm Diag}\left((\pi'(\mu^q)h^q)_{\beta^l_k}\right)\right]((R^l_{k})^{q})^\top .
	\end{array}\right.
	\end{equation}
It has been shown in \cite[Theorem 7]{DSSToh14} that for each $q$, the derivative of $\Pi_{\Delta^1}$ at $W^q$ is given by 
	\[
	\Pi_{\Delta^1}'(W^q)(D^1(H))=\left((\Omega^1_1)^q,\ldots,(\Omega^1_{s^1})^q,\ldots,(\Omega^{r}_1)^q,\ldots,(\Omega^r_{s^r})^q\right)\in\mathbb{W}^1.
	\] 
	For each $l\in\{1,\ldots,r\}$, recall the index set ${\cal E}^l$ defined in \eqref{def-E-index}. We know from  Proposition \ref{prop:MY-phi-direction-diff} that for each $l\in\{1,\ldots,r\}$, $k\in {\cal E}^l$ and $q$, 
	\begin{equation}\label{eq:Al-beta2-case1}
	{\cal A}^l_k(\mu^q)=0.
	\end{equation} 
	Denote $X^q:=g_1(\overline{\bf x})+W^q$. Based on \cite[Theorem 2.1]{msarabi2016b},  for any $q$ sufficient large, it holds that \[\eta_1(\lambda(g_1(\overline{\bf x})),\lambda(\overline{Y}))\subseteq \iota_1(\lambda(X^q)).\]  Again using Proposition \ref{prop:MY-phi-direction-diff}, we obtain that for each $l\in\{1,\ldots,r\}$, $k\in {\cal E}^l$ and $q$, there exists $\rho^l_{k^l}\in\mathbb{R}$ such that
	\[
	(\pi'(\mu^q)h^q)_{\beta^l_{k}}=\rho^l_{k} {\bf e}_{|\beta_{k}^l|},
	\] 
	and for any $i,j\in\eta_1(\lambda(g_1(\overline{\bf x})),\lambda(\overline{Y}))$,
	\[\begin{array}{ll}
	\langle\, {\rm diag}(\Omega^q), {\bf a}^i-{\bf a}^j  \,\rangle=\displaystyle\sum_{l=1}^r\sum_{k\in {\cal E}^l}  \left\langle {\rm diag}((\Omega^l_k)^q),  \, ({\bf a}^i)_{\beta_{k}^l}-({\bf a}^j)_{\beta_{k}^l} \right\rangle \\
	=\displaystyle\sum_{l=1}^r\sum_{k\in {\cal E}^l}  \left\langle \rho^l_{k} {\bf e}_{|\beta_{k}^l|}, \, ({\bf a}^i)_{\beta_{k}^l}-({\bf a}^j)_{\beta_{k}^l} \right\rangle=0,
	\end{array}
	\] 
	where ${\rm diag}(\Omega^q)=\left({\rm diag}((\Omega^1_1)^q),\ldots,{\rm diag}((\Omega^1_{s^1})^q),\ldots,{\rm diag}((\Omega^{r}_1)^q),\ldots,{\rm diag}((\Omega^r_{s^r})^q)\right)$.
	Thus, we know that ${\cal S}(H)$ satisfies condition (iii) and \eqref{eq:Condition iii-aff} of Proposition \ref{prop:critical_cone_aff}. The above arguments show that \eqref{SH in affine hull} holds.
	
	In the following, we assume ${\cal S}(H) = 0$ for some $H\in \mathbb{S}^n$ and prove the rest of the lemma. Since ${\cal S}(H) = 0$, it is clear that $\overline{U}^\top {\cal S}(H)\overline{U}=0$. It then follows from Lemma \ref{prop:subdiff-theta} that for any $1\le l<l'\le r$,
	\[
	0=\overline{U}^\top _{\alpha^l}{\cal S}(H)\overline{U}_{\alpha^{l'}}=\left(\overline{U}^\top _{\alpha^{l'}}{\cal S}(H)\overline{U}_{\alpha^l}\right)^\top = {\cal A}_{\alpha^l\alpha^{l'}}\circ\left(\overline{U}_{\alpha^l}^\top H \overline{U}_{\alpha^{l'}}\right).
	\]
	By the definition of ${\cal A}$ in \eqref{eq:A-def}, we have ${\cal A}_{\alpha^l\alpha^{l'}}=\left({\cal A}_{\alpha^{l'}\alpha^l} \right)^\top \neq 0$ for all $1\le l<l'\le r$, which implies that 
	\[
	\overline{U}_{\alpha^l}^\top H\overline{U}_{\alpha^{l'}}=\left( \overline{U}_{\alpha^l}^\top H \overline{U}_{\alpha^{l'}}\right)^\top =0, \quad 1\le l<l'\le r.
	\]
	This proves the statement (i).
	
We further obtain from Lemma \ref{prop:subdiff-theta} that
	\begin{equation}\label{eq:diag=0}
	0=\overline{U}^\top {\cal S}(H)\overline{U}={\rm Diag}\left({\cal U}^1_1(D^1(H)),\ldots,{\cal U}^r_{s^r}(D^1(H))\right).
	\end{equation}
By Carath\'{e}odory's theorem, we obtain from \eqref{eq:conv hull} that there exist 
$$
{\cal U}^{\{i\}}\in\left\{ \lim_{W\to 0}\Pi'_{\Delta^1}(W)\mid W\in {\cal D}_{\Pi_{\Delta^1}}\cap \daleth\right\}, \quad i=1,\ldots,w
$$ 
for some positive integer $w$
such that  ${\cal U}=({\cal U}^1_1,\ldots,{\cal U}^1_{s^1},\ldots,{\cal U}^r_1,\ldots,{\cal U}^r_{s^r})\in \partial\Pi_{\Delta^1}(0)$ can be written as 
$$
{\cal U}=\displaystyle\sum_{i=1}^w\varsigma_i\,{\cal U}^{\{i\}}\epc \mbox{for some $\varsigma_i\ge 0$, $i=1,\ldots,w$, and $\displaystyle\sum_{i=1}^w\varsigma_i=1$.}
$$
 For each ${\cal U}^{\{i\}}$,  there exits a sequence $\left\{W^{q}\right\}\subseteq {\cal D}_{\Pi_{\Delta^1}}\cap \daleth$ converging to $0 \in \mathbb{W}$  such that
	\[
	{\cal U}^{\{i\}}(D^1(H))=\lim_{q\to \infty}\Pi'_{\Delta^1}(W^q)(D^1(H)).
	\]
	Following the same notation in \eqref{defn:Ahomega} with respect to the newly defined  sequence  $\left\{W^{q}\right\}$, we
	derive from  Proposition \ref{prop:MY-phi-direction-diff} that for each $l\in\{1,\ldots,r\}$ and $k\in {\cal E}^l$ and all $q$, 
	\begin{equation*} 
	({\cal A}^l_{k}(\mu^q))_{ij}=\left\{ \begin{array}{ll}
	0 & \epc \mbox{if $k\in {\cal E}^l$} \\ [3pt]
	1 & \epc \mbox{if $k\notin {\cal E}^l$}
	\end{array}\right. \quad \forall\,i,j\in\{1,\ldots,|\beta^l_k|\} \ {\rm with}\ i\neq j.
	\end{equation*} 
	Since $\eta_1(\lambda(g_1(\overline{\bf x})),\lambda(\overline{Y}))\subseteq \iota_1(\lambda(X^q))$, we know that  if $k\in {\cal E}^l$ for some  $l\in\{1,\ldots,r\}$, then  there exists a scalar $(\rho^l_{k})^q$ such that 
	\[
	(\pi'(\mu^q)h^q)_{\beta^l_{k}}=\left\{\begin{array}{ll}
	(\rho^l_{k})^q \, {\bf e}_{|\beta_{k}^l|}  &\epc  \mbox{if $k\in {\cal E}^l$,} \\ [0.1in]
	(h^q)_{\beta^l_{k}}  & \epc \mbox{if $k\notin {\cal E}^l$},
	\end{array} \right. 
	\] 
	yielding 
	\[
	(\Omega^l_{k})^q=\left\{\begin{array}{ll}
	{\rm Diag}\left((\pi'(\mu^q)h^q)_{\beta^l_k}\right)=(\rho^l_{k})^q \, I_{|\beta_{k}^l|}  & \epc \mbox{if $k\in {\cal E}^l$,} \\ [0.1in]
	\widetilde{H}_{\beta^l_k\beta^l_k} & \epc  \mbox{if $k\notin {\cal E}^l$}.
	\end{array} \right. 
		\]
	This further implies the existence of a scalar $\hat{\rho}^l_{k}$ such that
	\[
	\left({\cal U}^{\{i\}}\right)_k^l(D^1(H))=\lim_{q\to \infty}(\Omega^l_{k})^q=\left\{\begin{array}{ll}
	\hat{\rho}^l_{k}I_{|\beta_{k}^l|}  & \epc \mbox{if $k\in {\cal E}^l$,} \\ [0.1in]
	\widetilde{H}_{\beta^l_k\beta^l_k} & \epc \mbox{if $k\notin {\cal E}^l$.}
	\end{array} \right.
	\]
	Taking into account the equality in \eqref{eq:diag=0}, we derive
	\[
	{\rm tr}\left(\widetilde{H}_{\beta^l_k\beta^l_k}\right)=\sum_{i,j\in \iota_1(\lambda(g_1(\overline{\bf{x}})))} \kappa_{ij}\left\langle {\bf e}_{|\beta^l_k|}, \, ({\bf a}^i-{\bf a}^j)_{\beta^l_k}\right\rangle \epc \mbox{for some scalars $\{\kappa_{ij}\}_{i,j\in \iota_1(\lambda(g_1(\overline{\bf{x}})))}$}.
	\] 
	 If $k\notin {\cal E}^l$, then $\widetilde{H}_{\beta^l_k\beta^l_k}=0$.
	This completes the proof of this lemma. \qed

\end{proof}

By comparing  the characterization of Clarke's generalized Jacobian of the proximal mapping  $\Pi_{\cal K}$ in Lemma \ref{prop:subdiff-theta2} with  ${\rm aff}\,({\cal C}(\lambda(g_2(\overline{\bf{x}})+\overline{Z}; {\cal N}_{\cal K}(\lambda(g_2(\overline{\bf{x}}))))$ in Proposition \ref{prop:critical_cone_aff2}, we can obtain the following results with respect to $\partial \theta_2$. Its proof can be obtained similarly as that of Lemma \ref{lemma:V-in-affC}. We omit the details here for brevity. 

\begin{lemma}\label{lemma:V-in-affC2}
	Suppose that $\overline{Z}\in{\cal N}_{\cal K}(g_2(\overline{\bf x}))$ with $\overline{V}\in \mathbb{O}^n(g_2(\overline{\bf x}))\cap \mathbb{O}^n(\overline{Z})$ and  ${\cal S} \in\partial\Pi_{\cal K}(g_2(\overline{\bf x}) + \overline{Z})$. Then
	\[
	{\cal S}(H)\in {\rm aff}\,({\cal C}(g_2(\overline{\bf x}) + \overline{Z}; {\cal N}_{\cal K}(g_2(\overline{\bf x})))),\quad \forall\, H\in\mathbb{S}^n.
	\]
In addition, if ${\cal S}(H)=0$ for some $H\in \mathbb{S}^n$, then the following two conditions hold:
	\begin{itemize}
		\item[(i)] $\overline{V}^\top H\overline{V}\in \mathbb{S}^n$ has the following block diagonal structure:
	\begin{equation}\label{eq:bdiag-triangleZ}
	\overline{V}^\top H\overline{V}={\rm Diag}\left(\overline{V}^\top _{\alpha^1}H\overline{V}_{\alpha^1},\cdots, \overline{V}^\top _{\alpha^r}H\overline{V}_{\alpha^r}\right); 
	\end{equation}
	\item[(ii)] for $l=1,\ldots,r$, let ${\cal F}^{\,l}$ be the index set defined by \eqref{def-E-index2}. If $k\in {\cal F}^{\,l}$, then there exist $\{{\kappa}_{ij}\in \mathbb{R}\}_{i,j\in \iota_2(\lambda(g_2(\overline{\bf{x}})))}$ such that  
	\[
	{\rm tr}\left(\overline{V}^\top _{\gamma^l_k}H\overline{V}_{\gamma^l_k}\right)=\sum_{i,j\in \iota_2(\lambda(g_2(\overline{\bf{x}})))} {\kappa}_{ij}\left\langle \, {\bf e}_{|\gamma^l_k|}\, , \, ({\bf b}^i-{\bf b}^j)_{\gamma^l_k}\,\right\rangle;
	\] 
	otherwise if $k\notin {\cal F}^{\,l}$, then $\overline{V}^\top _{\gamma^l_k}H\overline{V}_{\gamma^l_k}=0$.
		\end{itemize}
\end{lemma}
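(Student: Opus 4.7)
The plan is to mirror the proof of Lemma \ref{lemma:V-in-affC} with the obvious substitutions appropriate to the constraint side of the problem: replace $\partial\theta_1$ by ${\cal N}_{\cal K}$, the basis data $\{{\bf a}^i\}$ by $\{{\bf b}^i\}$, $\overline{U}$ by $\overline{V}$, the matrix ${\cal A}$ by ${\cal B}$, the set $\Delta^1$ by $\Delta^2$, the directional data $D^1(H)$ by $D^2(H)$, the block partition $\{\beta^l_k\}$ by $\{\gamma^l_k\}$, and the index set ${\cal E}^l$ by ${\cal F}^l$. Lemma \ref{prop:subdiff-theta2} replaces Lemma \ref{prop:subdiff-theta} and Proposition \ref{prop:critical_cone_aff2} replaces Proposition \ref{prop:critical_cone_aff}. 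Because Proposition \ref{prop:MY-phi-direction-diff} treats $\phi_1$ and $\phi_2$ on equal footing (the directional derivative of the proximal mapping is characterized via the critical cone in both cases), the whole argument transplants essentially unchanged.

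For the inclusion ${\cal S}(H)\in {\rm aff}\,({\cal C}(g_2(\overline{\bf x})+\overline{Z};{\cal N}_{\cal K}(g_2(\overline{\bf x}))))$, I would invoke Lemma \ref{prop:subdiff-theta2} to write
\[
\overline{V}^\top {\cal S}(H)\overline{V}={\cal B}\circ(\overline{V}^\top H\overline{V})+{\rm Diag}\bigl({\cal V}^1_1(D^2(H)),\ldots,{\cal V}^r_{t^r}(D^2(H))\bigr)
\]
for some ${\cal V}\in\partial\Pi_{\Delta^2}(0)$. Since the eigenvalues of $g_2(\overline{\bf x})$ coincide within each $\alpha^l$, one has ${\cal B}_{\alpha^l\alpha^l}=0$, which delivers condition (i) of Proposition \ref{prop:critical_cone_aff2}. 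To obtain conditions (iii) and the identity $\langle {\rm diag}(\overline{V}^\top {\cal S}(H)\overline{V}),{\bf b}^i\rangle=0$ for $i\in\eta_2(\lambda(g_2(\overline{\bf x})),\lambda(\overline{Z}))$, I would apply Warga's theorem (as in \eqref{eq:conv hull}) to represent $\partial\Pi_{\Delta^2}(0)$ as the convex hull of limits of $\Pi'_{\Delta^2}$ at nearby differentiability points with simple spectra, use Theorem 7 of \cite{DSSToh14} to obtain an explicit formula for $\Pi'_{\Delta^2}(W^q)$, and then invoke Proposition \ref{prop:MY-phi-direction-diff}(ii) applied to $\phi_2$: the analogue of \eqref{eq:Al-beta2-case1} forces the auxiliary matrix ${\cal A}^l_k(\mu^q)$ to vanish on blocks with $k\in {\cal F}^l$, and the constraints defining $\Delta^2$ produce scalar-multiple-of-identity images on such blocks. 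Passing to the limit and then to convex combinations yields the required properties.

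For the second statement, assume ${\cal S}(H)=0$. The off-block relations $\overline{V}^\top {\cal S}(H)\overline{V}=0$ give ${\cal B}_{\alpha^l\alpha^{l'}}\circ(\overline{V}^\top_{\alpha^l}H\overline{V}_{\alpha^{l'}})=0$ for $l\neq l'$; because the eigenvalues of $g_2(\overline{\bf x})$ across distinct blocks are distinct, all entries of ${\cal B}_{\alpha^l\alpha^{l'}}$ are nonzero, so $\overline{V}^\top_{\alpha^l}H\overline{V}_{\alpha^{l'}}=0$, yielding \eqref{eq:bdiag-triangleZ}. The diagonal portion then gives ${\cal V}(D^2(H))=0$. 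Applying Carath\'eodory's theorem to write ${\cal V}=\sum_{i=1}^w\varsigma_i{\cal V}^{\{i\}}$ with each ${\cal V}^{\{i\}}$ a limit along a sequence in ${\cal D}_{\Pi_{\Delta^2}}\cap\daleth$, and computing each limit using the explicit derivative formula of Theorem 7 of \cite{DSSToh14}, I expect to find that ${\cal V}^{\{i\}}(D^2(H))_k^l$ equals $\hat\rho^l_k I_{|\gamma^l_k|}$ when $k\in {\cal F}^l$ and equals $\overline{V}^\top_{\gamma^l_k}H\overline{V}_{\gamma^l_k}$ when $k\notin {\cal F}^l$. Combining this with the vanishing of ${\cal V}(D^2(H))$ and the constraints $\langle\nu(\cdot),{\bf b}^i\rangle=\langle\nu(\cdot),{\bf b}^j\rangle$ ($i,j\in\eta_2$) defining $\Delta^2$ gives $\overline{V}^\top_{\gamma^l_k}H\overline{V}_{\gamma^l_k}=0$ for $k\notin {\cal F}^l$ and the trace identity involving $\{{\bf b}^i-{\bf b}^j\}$ for $k\in {\cal F}^l$.

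The main obstacle is a bookkeeping one: the normal-cone representation $\lambda(\overline{Z})=\sum_i u_i{\bf b}^i$ has $u_i>0$ but no normalization $\sum u_i=1$, whereas for $\theta_1$ one had $\sum u_i=1$. This difference is benign here because the conditions characterizing membership in ${\rm aff}\,({\cal C})$ and the definition of $\Delta^2$ use only differences or equalities of the $\langle\nu,{\bf b}^i\rangle$; thus no normalization enters the argument, and the transplantation is mechanical rather than conceptual.
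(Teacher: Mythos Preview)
Your proposal is correct and follows precisely the approach the paper intends: the paper explicitly states that the proof of Lemma~\ref{lemma:V-in-affC2} ``can be obtained similarly as that of Lemma~\ref{lemma:V-in-affC}'' and omits the details, and you have accurately identified the necessary substitutions (${\cal N}_{\cal K}$ for $\partial\theta_1$, $\{{\bf b}^i\}$ for $\{{\bf a}^i\}$, ${\cal B}$ for ${\cal A}$, $\Delta^2$ for $\Delta^1$, $\{\gamma^l_k\}$ for $\{\beta^l_k\}$, ${\cal F}^l$ for ${\cal E}^l$, Lemma~\ref{prop:subdiff-theta2} for Lemma~\ref{prop:subdiff-theta}, and Proposition~\ref{prop:critical_cone_aff2} for Proposition~\ref{prop:critical_cone_aff}). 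Your observation that the missing normalization $\sum u_i=1$ on the normal-cone side is immaterial is apt and is exactly the sort of check one should make when transplanting the argument.
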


Finally, we  establish  a connection between the function $\Upsilon_{g_1(\overline{\bf{x}})}^1$ defined in Remark \ref{remark:Upsilon} and the Clarke generalized Jacobian of ${\rm Pr}_{\theta_1}$ given by Proposition \ref{prop:subdiff-theta}. This result plays a key role in our subsequent analysis.

\begin{lemma}\label{prop:inequality-V}
	Suppose that $\overline{Y} \in \partial\theta_1(g_1(\overline{\bf x}))$. Then, for any ${\cal S}\in\partial{\rm Pr}_{\theta_1}(g_1(\overline{\bf x}) + \overline{Y})$ and $ (\triangle{X}, \triangle{Y})\in\mathbb{S}^n\times\mathbb{S}^n$ such that $ \triangle{X}={\cal S}(\triangle{X}+\triangle{Y})$, it holds that
	\[	\left\langle \triangle{X}, \triangle{Y} \right\rangle\ge -\Upsilon^1_{ g_1(\overline{\bf x})}\left( \overline{Y} , \triangle{X}\right).
\]
\end{lemma}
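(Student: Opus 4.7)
The plan is to use the explicit formula for ${\cal S}\in\partial{\rm Pr}_{\theta_1}(g_1(\overline{\bf x})+\overline{Y})$ furnished by Lemma~\ref{prop:subdiff-theta} and to split the inner product $\langle\triangle X,\triangle Y\rangle$ along the block partition $\{\alpha^l\}_{l=1}^r$ of $\{1,\ldots,n\}$ together with its refinement $\{\beta^l_k\}$: the cross-block ($l\ne l'$) piece will be computed in closed form and identified with $-\Upsilon^1_{g_1(\overline{\bf x})}(\overline{Y},\triangle X)$, while the within-block piece will be shown nonnegative via the Loewner bound on Clarke's generalized Jacobian of a metric projection.

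Writing $\overline{X}:=g_1(\overline{\bf x})$ and $\widetilde A:=\overline{U}^\top A\,\overline{U}$ for $\overline{U}\in\mathbb{O}^n(\overline{X})\cap\mathbb{O}^n(\overline{Y})$, Lemma~\ref{prop:subdiff-theta} produces ${\cal U}\in\partial\Pi_{\Delta^1}(0)$ such that, with $H:=\triangle X+\triangle Y$,
\[
\widetilde{\triangle X}={\cal A}\circ\widetilde H+{\rm Diag}\!\bigl({\cal U}^1_1(D^1(H)),\ldots,{\cal U}^r_{s^r}(D^1(H))\bigr).
\]
Since $\lambda_i(\overline{X})=\lambda_j(\overline{X})$ for any $i,j\in\alpha^l$, the matrix ${\cal A}$ vanishes on every $\alpha^l\times\alpha^l$ block. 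Consequently $\widetilde{\triangle X}_{ij}={\cal A}_{ij}\widetilde H_{ij}$ when $i\in\alpha^l$, $j\in\alpha^{l'}$, $l\ne l'$; $\widetilde{\triangle X}_{ij}=0$ when $i\in\beta^l_k$, $j\in\beta^l_{k'}$, $k\ne k'$; and $\widetilde{\triangle X}_{\beta^l_k\beta^l_k}={\cal U}^l_k(D^1(H))$. Also $\widetilde{\triangle Y}=\widetilde H-\widetilde{\triangle X}$ and $\langle\triangle X,\triangle Y\rangle=\langle\widetilde{\triangle X},\widetilde{\triangle Y}\rangle$. Because $\overline{U}$ diagonalizes $\overline{X}$ and $\overline{Y}$ simultaneously in non-increasing order, one has $\lambda_i(\overline{X}+\overline{Y})=\lambda_i(\overline{X})+\lambda_i(\overline{Y})$ and hence, for $i\in\alpha^l$, $j\in\alpha^{l'}$, $l\ne l'$,
\[
{\cal A}_{ij}=\frac{\lambda_i(\overline{X})-\lambda_j(\overline{X})}{\lambda_i(\overline{X}+\overline{Y})-\lambda_j(\overline{X}+\overline{Y})},\qquad 1-{\cal A}_{ij}=\frac{\lambda_i(\overline{Y})-\lambda_j(\overline{Y})}{\lambda_i(\overline{X}+\overline{Y})-\lambda_j(\overline{X}+\overline{Y})}.
\]
Combined with $\widetilde{\triangle X}_{ij}^{\,2}={\cal A}_{ij}^{\,2}\widetilde H_{ij}^{\,2}$, a direct calculation yields
$
{\cal A}_{ij}(1-{\cal A}_{ij})\widetilde H_{ij}^{\,2}=\dfrac{\lambda_i(\overline{Y})-\lambda_j(\overline{Y})}{\lambda_i(\overline{X})-\lambda_j(\overline{X})}\,\widetilde{\triangle X}_{ij}^{\,2},
$
and summing over all $l\ne l'$ and $i\in\alpha^l$, $j\in\alpha^{l'}$ gives, by formula~\eqref{eq:Upsilon1-eq}, exactly $-\Upsilon^1_{\overline{X}}(\overline{Y},\triangle X)$; this is the cross-block contribution to $\langle\widetilde{\triangle X},\widetilde{\triangle Y}\rangle$.

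For the within-block piece, the $\beta^l_k\times\beta^l_{k'}$ entries with $k\ne k'$ contribute zero because $\widetilde{\triangle X}$ vanishes there, while the $\beta^l_k\times\beta^l_k$ entries produce
\[
\sum_{l,k}\bigl\langle{\cal U}^l_k(D^1(H)),\,\widetilde H_{\beta^l_k\beta^l_k}-{\cal U}^l_k(D^1(H))\bigr\rangle=\bigl\langle{\cal U}(D^1(H)),\,(I-{\cal U})(D^1(H))\bigr\rangle.
\]
Since $\Pi_{\Delta^1}$ is the metric projection onto the nonempty closed convex set $\Delta^1\subseteq\mathbb{W}^1$, it is firmly non-expansive and coincides with the gradient of a convex function, so at every point of differentiability its Jacobian is self-adjoint and satisfies $0\preceq J\preceq I$ in the Loewner order. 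These properties pass to pointwise limits and to convex hulls, so every ${\cal U}\in\partial\Pi_{\Delta^1}(0)$ is self-adjoint with $0\preceq{\cal U}\preceq I$. Therefore ${\cal U}(I-{\cal U})\succeq 0$ and the within-block contribution is nonnegative, whence $\langle\triangle X,\triangle Y\rangle\ge -\Upsilon^1_{\overline{X}}(\overline{Y},\triangle X)$. The principal technical point, and thus the main obstacle, is justifying the Loewner bound $0\preceq{\cal U}\preceq I$ for every ${\cal U}\in\partial\Pi_{\Delta^1}(0)$; the rest is routine block bookkeeping combined with the algebraic identity for ${\cal A}_{ij}(1-{\cal A}_{ij})$ that matches the summands of the sigma-term formula~\eqref{eq:Upsilon1-eq}.
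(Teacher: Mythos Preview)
Your proof is correct and follows essentially the same route as the paper's: both use Lemma~\ref{prop:subdiff-theta} to split $\langle\triangle X,\triangle Y\rangle$ into the cross-block sum $2\sum_{l<l'}\langle\overline{U}_{\alpha^l}^\top\triangle X\,\overline{U}_{\alpha^{l'}},\overline{U}_{\alpha^l}^\top\triangle Y\,\overline{U}_{\alpha^{l'}}\rangle$, which is identified with $-\Upsilon^1_{g_1(\overline{\bf x})}(\overline{Y},\triangle X)$ via \eqref{eq:Upsilon1-eq}, and the within-block sum $\langle{\cal U}(D^1(H)),D^1(H)-{\cal U}(D^1(H))\rangle$, which is shown nonnegative. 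The only difference is cosmetic: for the latter nonnegativity the paper invokes \cite[Proposition~1(c)]{MSZhao05} directly, whereas you rederive that proposition by noting that elements of $\partial\Pi_{\Delta^1}(0)$ are self-adjoint with $0\preceq{\cal U}\preceq I$, hence ${\cal U}(I-{\cal U})\succeq 0$.
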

\begin{proof} Denote $H:=\triangle{X}+\triangle{Y}$. For any given ${\cal S}\in\partial{\rm Pr}_{\theta_1}(g_1(\overline{\bf x}) + \overline{Y})$, it is known from Lemma \ref{lemma:V-in-affC} that
	\[
	\overline{U}^\top \triangle X\overline{U}={\cal A}\circ\left(\overline{U}^\top H\overline{U}\right)+{\rm Diag}\left({\cal U}^1_1(D^1(\overline{U}^\top H\overline{U})),\ldots,{\cal U}^r_{s^r}(D^1(\overline{U}^\top H\overline{U}))\right),
	\]
	where $\overline{U}\in\mathbb{O}^n(g_1(\overline{\bf x})) \cap \mathbb{O}^n(\overline{Y})$. This further yields that
\begin{equation}\label{eq:triX-1}
\left\{\begin{array}{lll}	

	\overline{U}^\top _{\alpha^l}\triangle X\overline{U}_{\alpha^{l'}}&=&{\cal A}_{\alpha^l\alpha^{l'}}\circ \left(\;\overline{U}^\top _{\alpha^l}(\triangle{X}+\triangle{Y})\overline{U}_{\alpha^{l'}}\right)\quad \mbox{$\forall \, l,\,l'\in\{1,\ldots,r\}$ with $l\neq l'$},\\ [0.1in]
	\overline{U}^\top _{\alpha^l}\triangle X\overline{U}_{\alpha^{l}}&=&{\rm Diag}\left({\cal U}^l_{1}(D^1(\overline{U}^\top H\overline{U})),\ldots,{\cal U}^l_{s_l}(D^1(\overline{U}^\top H\overline{U})) \right) \quad \forall\, l\in\{1,\ldots,r\}.
	\end{array}\right.
	\end{equation}
		Therefore, we have
	\begin{eqnarray*}
		&&\left\langle \triangle{X}, \triangle{Y} \right\rangle = \left\langle \overline{U}^\top \triangle{X}\overline{U}, \overline{U}^\top \triangle{Y}\overline{U} \right\rangle \\ [3pt]
		&=&\sum_{l=1}^r\langle \overline{U}^\top _{\alpha^l}\triangle X\overline{U}_{\alpha^{l}}, \overline{U}^\top _{\alpha^l}\triangle Y\overline{U}_{\alpha^{l}}\rangle + 2\sum_{1\le l<l'\le r} \langle \overline{U}^\top _{\alpha^l}\triangle X\overline{U}_{\alpha^{l'}}, \overline{U}^\top _{\alpha^l}\triangle Y\overline{U}_{\alpha^{l'}}\rangle \\ [3pt]
		&=& \sum_{l=1}^r\sum_{k=1}^{s_l}\langle \overline{U}^\top _{\beta^l_k}\triangle X\overline{U}_{\beta^l_k}, \overline{U}^\top _{\beta^l_k}\triangle Y\overline{U}_{\beta^l_k}\rangle+ 2\sum_{1\le l<l'\le r} \langle \overline{U}^\top _{\alpha^l}\triangle X\overline{U}_{\alpha^{l'}}, \overline{U}^\top _{\alpha^l}\triangle Y\overline{U}_{\alpha^{l'}}\rangle \\ [3pt]
		&=& \sum_{l=1}^r\sum_{k=1}^{s_l}\langle \, {\cal U}^l_{k}(D^1(\overline{U}^\top H\overline{U})), \overline{U}^\top _{\beta^l_k}\triangle Y\overline{U}_{\beta^l_k}\rangle+ 2\sum_{1\le l<l'\le r} \langle \overline{U}^\top _{\alpha^l}\triangle X\overline{U}_{\alpha^{l'}}, \overline{U}^\top _{\alpha^l}\triangle Y\overline{U}_{\alpha^{l'}}\rangle \\ [3pt]
		&=& \sum_{l=1}^r\sum_{k=1}^{s_l}\langle \, {\cal U}^l_{k}(D^1(\overline{U}^\top H\overline{U})), \overline{U}^\top _{\beta^l_k}H\overline{U}_{\beta^l_k}-\overline{U}^\top _{\beta^l_k}\triangle X\overline{U}_{\beta^l_k}\rangle+ 2\sum_{1\le l<l'\le r} \langle \overline{U}^\top _{\alpha^l}\triangle X\overline{U}_{\alpha^{l'}}, \overline{U}^\top _{\alpha^l}\triangle Y\overline{U}_{\alpha^{l'}}\rangle \\ [3pt]
		&=& \sum_{l=1}^r\sum_{k=1}^{s_l}\langle \, {\cal U}^l_{k}(D^1(\overline{U}^\top H\overline{U})), \overline{U}^\top _{\beta^l_k}H\overline{U}_{\beta^l_k}-{\cal U}^l_{k}(D(\overline{U}^\top H\overline{U}))\rangle+ 2\sum_{1\le l<l'\le r} \langle \overline{U}^\top _{\alpha^l}\triangle X\overline{U}_{\alpha^{l'}}, \overline{U}^\top _{\alpha^l}\triangle Y\overline{U}_{\alpha^{l'}}\rangle \\ [3pt]
		&=& \left\langle {\cal U}(D^1(\overline{U}^\top H\overline{U})), D^1(\overline{U}^\top H\overline{U})- {\cal U}(D^1(\overline{U}^\top H\overline{U}))\right\rangle +2\sum_{1\le l<l'\le r} \langle \overline{U}^\top _{\alpha^l}\triangle X\overline{U}_{\alpha^{l'}}, \overline{U}^\top _{\alpha^l}\triangle Y\overline{U}_{\alpha^{l'}}\rangle.
	\end{eqnarray*}
	Since ${\cal U}\in \partial\Pi_{\Delta^1}(0)$ and $\Delta^1$ is a nonempty closed convex set defined by \eqref{eq:def-Delta}, we know from \cite[Proposition 1 (c)]{MSZhao05} that 
	$$
	\left\langle {\cal U}(D^1(\overline{U}^\top H\overline{U})), D^1(\overline{U}^\top H\overline{U})- {\cal U}(D^1(\overline{U}^\top H\overline{U}))\right\rangle\ge 0.
	$$ Therefore, 
\[
	\left\langle \triangle{X}, \triangle{Y} \right\rangle \ge  2\sum_{1\le l<l'\le r} \langle \overline{U}^\top _{\alpha^l}\triangle X\overline{U}_{\alpha^{l'}}, \overline{U}^\top _{\alpha^l}\triangle Y\overline{U}_{\alpha^{l'}}\rangle.
\]
		On the other hand, one can easily verify from Remark \ref{remark:Upsilon} and \eqref{eq:triX-1} that
	\begin{equation*}
	-\Upsilon^1_{ g_1(\overline{\bf x})}\left( \overline{Y} , \triangle{X}\right)=2\sum_{1\le l<l'\le r} \langle \overline{U}^\top _{\alpha^l}\triangle X\overline{U}_{\alpha^{l'}}, \overline{U}^\top _{\alpha^l}\triangle Y\overline{U}_{\alpha^{l'}}\rangle.
	\end{equation*}
	Combining the above equality and inequality together, we establish the desired result of this lemma. \qed 
\end{proof}

We also have the following analogous result with respect to the function $\partial\theta_2$. 

\begin{lemma}\label{prop:inequality-V2}
Suppose that $\overline{Z}\in \mathcal{N}_{\mathcal{K}}(g_2(\overline{\bf x}))$. Then for any ${\cal S}\in\partial\Pi_{\cal K}(g_2(\overline{\bf x}) + \overline{Z})$ and $ (\triangle{X}, \triangle{Z})\in\mathbb{S}^n\times\mathbb{S}^n$ such that $ \triangle{X}={\cal S}(\triangle{X}+\triangle{Z})$, it holds that
	\begin{equation}\label{eq:inequality-Upsilon2}
	\left\langle \triangle{X}, \triangle{Z} \right\rangle\ge -\Upsilon^2_{ g_2(\overline{\bf x})}\left( \overline{Z} , \triangle{X}\right).
	\end{equation} 
\end{lemma}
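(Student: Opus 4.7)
The plan is to mirror the proof of Lemma \ref{prop:inequality-V} step by step, replacing each $\theta_1$-level object by its $\theta_2$-level counterpart: $\overline{Y}\mapsto \overline{Z}$, $\overline{U}\in\mathbb{O}^n(g_1(\overline{\bf x}))\cap\mathbb{O}^n(\overline{Y})\mapsto \overline{V}\in\mathbb{O}^n(g_2(\overline{\bf x}))\cap\mathbb{O}^n(\overline{Z})$, $\beta^l_k\mapsto \gamma^l_k$, $s_l\mapsto t_l$, $\mathcal{A}\mapsto\mathcal{B}$, $D^1\mapsto D^2$, $\Delta^1\mapsto\Delta^2$ and $\Upsilon^1\mapsto \Upsilon^2$. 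All the structural ingredients needed for this substitution are already in place: Lemma \ref{prop:subdiff-theta2} gives exactly the analogous description of $\partial\Pi_{\cal K}(g_2(\overline{\bf x})+\overline{Z})$, Lemma \ref{lemma:V-in-affC2} gives the block-diagonalization of $\overline{V}^\top\triangle X\overline{V}$ in the kernel direction, and Remark \ref{remark:Upsilon2} provides the off-diagonal representation of $\Upsilon^2_{g_2(\overline{\bf x})}(\overline{Z},\triangle X)$.

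Concretely, I would first set $H:=\triangle X+\triangle Z$ and apply Lemma \ref{prop:subdiff-theta2} to produce $\mathcal{V}=(\mathcal{V}^1_1,\ldots,\mathcal{V}^r_{t^r})\in\partial\Pi_{\Delta^2}(0)$ with
\[
\overline{V}^\top\triangle X\,\overline{V}=\mathcal{B}\circ(\overline{V}^\top H\,\overline{V})+{\rm Diag}\bigl(\mathcal{V}^1_1(D^2(\overline{V}^\top H\,\overline{V})),\ldots,\mathcal{V}^r_{t^r}(D^2(\overline{V}^\top H\,\overline{V}))\bigr).
\]
Because $\lambda_i(g_2(\overline{\bf x}))=\lambda_j(g_2(\overline{\bf x}))$ for all $i,j\in\alpha^l$, the definition of $\mathcal{B}$ forces $\mathcal{B}_{\alpha^l\alpha^l}=0$, so the diagonal blocks reduce to $\overline{V}^\top_{\gamma^l_k}\triangle X\,\overline{V}_{\gamma^l_k}=\mathcal{V}^l_k(D^2(\overline{V}^\top H\,\overline{V}))$, while off-diagonal blocks satisfy $\overline{V}^\top_{\alpha^l}\triangle X\,\overline{V}_{\alpha^{l'}}=\mathcal{B}_{\alpha^l\alpha^{l'}}\circ\overline{V}^\top_{\alpha^l}H\,\overline{V}_{\alpha^{l'}}$ for $l\neq l'$.

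Next, I would expand $\langle\triangle X,\triangle Z\rangle=\langle \overline{V}^\top\triangle X\,\overline{V},\overline{V}^\top\triangle Z\,\overline{V}\rangle$ into diagonal and off-diagonal contributions. On the diagonal blocks, using $\triangle Z=H-\triangle X$ and the identity for $\overline{V}^\top_{\gamma^l_k}\triangle X\,\overline{V}_{\gamma^l_k}$, the diagonal contribution collapses to $\langle\mathcal{V}(D^2(\overline{V}^\top H\,\overline{V})),\,D^2(\overline{V}^\top H\,\overline{V})-\mathcal{V}(D^2(\overline{V}^\top H\,\overline{V}))\rangle$, which is nonnegative by \cite[Proposition~1(c)]{MSZhao05} since $\mathcal{V}\in\partial\Pi_{\Delta^2}(0)$ and $\Delta^2$ is a nonempty closed convex set. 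For the off-diagonal contribution, the identification
\[
2\!\!\sum_{1\le l<l'\le r}\!\!\langle \overline{V}^\top_{\alpha^l}\triangle X\,\overline{V}_{\alpha^{l'}},\,\overline{V}^\top_{\alpha^l}\triangle Z\,\overline{V}_{\alpha^{l'}}\rangle=-\Upsilon^2_{g_2(\overline{\bf x})}(\overline{Z},\triangle X)
\]
follows directly from the closed-form expression \eqref{eq:Upsilon2-eq} in Remark \ref{remark:Upsilon2} together with the definition of $\mathcal{B}$. Combining these two pieces yields \eqref{eq:inequality-Upsilon2}.

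Since every tool has an exact analogue from Subsection \ref{subsection:theta2} and the normal cone $\mathcal{N}_{\mathcal{K}}$ plays the role of $\partial\theta_1$, no new obstacle arises. The only point deserving a moment of care is the verification that $\Delta^2$ (built from the basis $\{{\bf b}^i\}$ of $\phi_2={\delta}_{{\rm dom}\,\phi}$ rather than from $\{{\bf a}^i\}$) remains a nonempty closed convex set so that \cite[Proposition~1(c)]{MSZhao05} is applicable; this is guaranteed by its definition as the orthogonal projection target of the directional derivative ${\rm Pr}'_{\phi_2}(\lambda(g_2(\overline{\bf x}));\,\bullet\,)$ characterized in Proposition \ref{prop:MY-phi-direction-diff}(i).
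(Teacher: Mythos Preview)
Your proposal is correct and follows exactly the approach the paper intends: the paper omits the proof of Lemma~\ref{prop:inequality-V2} as analogous to that of Lemma~\ref{prop:inequality-V}, and you have carried out precisely this substitution---replacing $\overline{Y},\overline{U},\beta^l_k,\mathcal{A},D^1,\Delta^1,\Upsilon^1$ by $\overline{Z},\overline{V},\gamma^l_k,\mathcal{B},D^2,\Delta^2,\Upsilon^2$ and invoking Lemma~\ref{prop:subdiff-theta2}, Remark~\ref{remark:Upsilon2}, and \cite[Proposition~1(c)]{MSZhao05} at the corresponding steps. The brief check that $\Delta^2$ is a nonempty closed convex set is a sensible point of care but, as you note, is immediate from its definition.
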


The following theorem, which is the main result of this paper, establishes the relationship between the strong second-order sufficient condition \eqref{eq:ssoc}  and constraint nondegeneracy \eqref{eq:nondegen} for problem \eqref{opt-eq}, the non-singularity of Clarke's Jacobian of the mapping $F$ and the strong regularity of a solution to the generalized equation \eqref{eq:GE-KKT}.

\begin{theorem}\label{prop:equivalent-1}
Let $\overline{\bf x}\in\mathbb{X}$ be a feasible solution to problem \eqref{opt-eq} with ${\cal M}(\overline{\bf x})\neq \emptyset$. Suppose that $(\overline{\bf y},\overline{Y},\overline{Z})\in{\cal M}(\overline{\bf x})$. Consider the following three statements:
\vskip 0.1in
\noindent
(i) the strong second-order sufficient condition \eqref{eq:ssoc} and constraint nondegeneracy \eqref{eq:nondegen} hold at $\overline{\bf x}$ for problem \eqref{opt-eq};
\vskip 0.1in
\noindent
(ii) every element in $\partial F(\overline{\bf x},\overline{\bf y},\overline{Y},\overline{Z})$ is nonsingular;
\vskip 0.1in
\noindent
(iii) $(\overline{\bf x},\overline{\bf y},\overline{Y},\overline{Z})$ is a strongly regular solution of the generalized equation \eqref{eq:GE-KKT}.
\vskip 0.1in
\noindent
It holds that $(i)\Longrightarrow(ii)\Longrightarrow(iii)$.
\end{theorem}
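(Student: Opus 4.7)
The plan is to establish the two implications separately, with the substantial work in $(i) \Longrightarrow (ii)$. For $(ii) \Longrightarrow (iii)$, by Lemma~\ref{lem:equivalence-strong-regularity-Lip-Homeomorphism} it suffices to show that $F$ is a locally Lipschitz homeomorphism near $(\overline{\bf x},\overline{\bf y},\overline{Y},\overline{Z})$. Since $F$ defined in \eqref{eq:def-nonsmooth-equation-MOP} is locally Lipschitz and every element of $\partial F$ at the base point is nonsingular by hypothesis, this follows from Clarke's inverse function theorem.

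For $(i) \Longrightarrow (ii)$, I argue by contradiction. Suppose some $\bm{W}\in\partial F(\overline{\bf x},\overline{\bf y},\overline{Y},\overline{Z})$ has a nonzero kernel element $(\triangle {\bf x}, \triangle {\bf y}, \triangle Y, \triangle Z)$. By the structure \eqref{eq:def-B-Jocbian-F}, there exist ${\cal S}^1\in\partial{\rm Pr}_{\theta_1}(g_1(\overline{\bf x})+\overline{Y})$ and ${\cal S}^2\in\partial\Pi_{\cal K}(g_2(\overline{\bf x})+\overline{Z})$ for which the four block equations hold. Reading off the second, third, and fourth blocks and invoking the inclusions in Lemma~\ref{lemma:V-in-affC} and Lemma~\ref{lemma:V-in-affC2}, we see that $g_1'(\overline{\bf x})\triangle {\bf x}$ and $g_2'(\overline{\bf x})\triangle {\bf x}$ lie in the respective affine hulls of the critical cones, so that $\triangle {\bf x} \in {\rm app}(\overline{\bf y},\overline{Y},\overline{Z})$ as defined in \eqref{eq:def-app}.

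Pairing the first block equation with $\triangle{\bf x}$, using $h'(\overline{\bf x})\triangle{\bf x} = 0$, and estimating the cross terms $\langle g_1'(\overline{\bf x})\triangle{\bf x},\triangle Y\rangle$ and $\langle g_2'(\overline{\bf x})\triangle{\bf x},\triangle Z\rangle$ via Lemma~\ref{prop:inequality-V} and Lemma~\ref{prop:inequality-V2}, I arrive at
\[
\langle \triangle{\bf x},{\cal L}''_{{\bf x}{\bf x}}(\overline{\bf x},\overline{\bf y},\overline{Y},\overline{Z})\triangle {\bf x}\rangle - \Upsilon_{g_1(\overline{\bf x})}^1\!\left(\overline{Y},g_1'(\overline{\bf x})\triangle {\bf x}\right) - \Upsilon_{g_2(\overline{\bf x})}^2\!\left(\overline{Z},g_2'(\overline{\bf x})\triangle {\bf x}\right) \le 0.
\]
If $\triangle{\bf x} \neq 0$, since $(\overline{\bf y},\overline{Y},\overline{Z}) \in {\cal M}(\overline{\bf x})$ provides a lower bound on the supremum in \eqref{eq:ssoc}, this contradicts the strong second-order sufficient condition.

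The remaining case $\triangle{\bf x} = 0$ must force $(\triangle{\bf y},\triangle Y,\triangle Z) = 0$ through constraint nondegeneracy. With $\triangle{\bf x} = 0$, the third and fourth block equations collapse to ${\cal S}^1(\triangle Y) = 0$ and ${\cal S}^2(\triangle Z) = 0$. Parts (i) and (ii) of Lemma~\ref{lemma:V-in-affC} and Lemma~\ref{lemma:V-in-affC2} then impose block-diagonal and trace conditions on $\triangle Y$ and $\triangle Z$ (in the $\overline{U}$- and $\overline{V}$-bases, respectively), which I plan to match against the characterizations in Proposition~\ref{prop:chara-Tlin} and Proposition~\ref{prop:chara-Tlin2} to conclude that $\triangle Y \perp {\cal T}^{\rm lin}_{\theta_1}(g_1(\overline{\bf x}))$ and $\triangle Z \perp {\rm lin}({\cal T}_{\cal K}(g_2(\overline{\bf x})))$. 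Combined with the surviving first block identity $h'(\overline{\bf x})^*\triangle{\bf y} + g_1'(\overline{\bf x})^*\triangle Y + g_2'(\overline{\bf x})^*\triangle Z = 0$, the polar form of the constraint nondegeneracy condition \eqref{eq:nondegen} yields $(\triangle{\bf y},\triangle Y,\triangle Z) = 0$, completing the contradiction. I expect the orthogonality verification in this last step to be the main technical obstacle, since it requires reconciling the finer partition $\{\beta^l_k\}$ indexed by ${\cal E}^l$ (derived from $\eta_1$) appearing in Lemma~\ref{lemma:V-in-affC} with the coarser partition $\{\alpha^l\}$ indexed by $\widetilde{\cal E}$ (derived from $\iota_1$) appearing in Proposition~\ref{prop:chara-Tlin}, and analogously for $\triangle Z$.
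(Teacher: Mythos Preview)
Your proposal is correct and follows essentially the same route as the paper. Two small remarks. First, to invoke \eqref{eq:ssoc} you need $\triangle{\bf x}$ in the intersection of ${\rm app}(\overline{\bf y},\overline{Y},\overline{Z})$ over \emph{all} multipliers, not just the given one; the paper handles this by noting up front that constraint nondegeneracy forces ${\cal M}(\overline{\bf x})=\{(\overline{\bf y},\overline{Y},\overline{Z})\}$, which you should state as well. Second, for the $\triangle{\bf x}=0$ step the paper uses the primal form of \eqref{eq:nondegen} directly---choosing ${\bf h}\in\mathbb{X}$, $H^1\in{\cal T}^{\rm lin}_{\theta_1}(g_1(\overline{\bf x}))$, $H^2\in{\rm lin}({\cal T}_{\cal K}(g_2(\overline{\bf x})))$ with $h'(\overline{\bf x}){\bf h}=\triangle{\bf y}$, $g_i'(\overline{\bf x}){\bf h}+H^i=\triangle Y,\triangle Z$, and then computing $\|\triangle{\bf y}\|^2+\|\triangle Y\|^2+\|\triangle Z\|^2=\langle H^1,\triangle Y\rangle+\langle H^2,\triangle Z\rangle$---rather than the polar formulation you describe, but these are equivalent and the partition reconciliation you flag is exactly the computation the paper performs.
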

\begin{proof} ``(i) $\Longrightarrow$ (ii)'' Since the constraint nondegeneracy \eqref{eq:nondegen} holds at $\overline{\bf x}$, we know that ${\cal M}(\overline{\bf x})=\{(\overline{\bf y},\overline{Y},\overline{Z})\}$. Then the strong second-order sufficient condition in \eqref{eq:ssoc} reduces to
\begin{equation}\label{eq:SSOSC-MCP'}
\langle {\bf d}, {\cal L}''_{{\bf x}{\bf x}}(\overline{\bf x},\overline{\bf y},\overline{Y},\overline{Z}){\bf d}\rangle -\Upsilon_{g_1(\overline{\bf x})}^1\left(\overline{Y},g_1'(\overline{\bf x}){\bf d}\right) - \Upsilon_{g_2(\overline{\bf x})}^2\left(\overline{Z},g_2'(\overline{\bf x}){\bf d}\right)    > 0,\quad  \forall\,{\bf d}\in{\rm app}(\overline{\bf y},\overline{Y},\overline{Z})\setminus\{0\}.
\end{equation}
Let ${\bm W}$ be an arbitrary element in $\partial F(\overline{\bf x},\overline{\bf y},\overline{Y},\overline{Z})$. We shall show that ${\bm W}$ is nonsingular. Suppose that $(\triangle {\bf x},\triangle {\bf y},\triangle Y,\triangle Z)\in\mathbb{X}\times\mathbb{Y}\times\mathbb{S}^{n}\times\mathbb{S}^{n}$ satisfies ${\bm W}(\triangle {\bf x},\triangle {\bf y},\triangle Y,\triangle Z)=0$. By \eqref{eq:def-B-Jocbian-F}, we know that there exists ${\cal S}^1\in\partial{\rm Pr}_{\theta_1}(g_1(\overline{\bf x})+\overline{Y})$ and ${\cal S}^2\in\partial\Pi_{\cal K}(g_2(\overline{\bf x})+\overline{Z})$ such that
\[
{\bm W}\left(\triangle {\bf x} ,\triangle {\bf y}, \triangle Y,\triangle Z\right)=\left[\begin{array}{c}
{\cal L}^{''}_{{\bf x}{\bf x}}(\overline{\bf x},\overline{\bf y},\overline{Y},\overline{Z})\triangle {\bf x}+h'(\overline{\bf x})^{*}\triangle {\bf y}+g_1'(\overline{\bf x})^*\triangle Y+g_2'(\overline{\bf x})^*\triangle Z\\ [0.1in]
h'(\overline{\bf x})\triangle {\bf x} \\ [0.1in]
g_1'(\overline{\bf x})\triangle {\bf x}-{\cal S}^1(g_1'(\overline{\bf x})\triangle {\bf x}+\triangle Y) \\ [0.1in]
g_2'(\overline{\bf x})\triangle {\bf x}-{\cal S}^2(g_2'(\overline{\bf x})\triangle {\bf x}+\triangle Z)
\end{array}\right]=0.
\]
It then follows from Lemma \ref{lemma:V-in-affC} and Lemma \ref{lemma:V-in-affC2} that 
\[\left\{\begin{array}{ll}
g_1'(\overline{\bf x})\triangle {\bf x}={\cal S}^1(g_1'(\overline{\bf x})\triangle {\bf x}+\triangle Y) \in {\rm aff}\left(\,{\cal C}(g_1(\overline{\bf x})+\overline{Y};\partial\theta_1(g_1(\overline{\bf x})))\,\right),\\[0.1in]
g_2'(\overline{\bf x})\triangle {\bf x}={\cal S}^2(g_2'(\overline{\bf x})\triangle {\bf x}+\triangle Z)\in {\rm aff}\left(\,{\cal C}(g_2(\overline{\bf x})+\overline{Z};{\cal N}_{\cal K}(g_2(\overline{\bf x}))) \,\right).
\end{array}\right.
\]
We thus obtain from \eqref{eq:def-app} that $\triangle {\bf x}\in{\rm app}(\overline{\bf y},\overline{Y},\overline{Z})$. 
In addition, we derive from Propositions \ref{prop:inequality-V} and \ref{prop:inequality-V2}  that
\begin{eqnarray}
0&=&\left\langle \triangle {\bf x}, \,{\cal L}^{''}_{{\bf x}{\bf x}}(\overline{\bf x},\overline{\bf y},\overline{Y},\overline{Z})\triangle {\bf x}+h'(\overline{\bf x})^{*}\triangle {\bf y}+g_1'(\overline{\bf x})^*\triangle Y+g_2'(\overline{\bf x})^*\triangle Z\right\rangle \nonumber \\ [0.05in]
&=& \left\langle \triangle {\bf x}, \,{\cal L}^{''}_{{\bf x}{\bf x}}(\overline{\bf x},\overline{\bf y},\overline{Y},\overline{Z})\triangle {\bf x} \rangle + \langle g_1'(\overline{\bf x})\triangle {\bf x},  \triangle Y\rangle + \langle g_2'(\overline{\bf x})\triangle {\bf x},  \triangle Z\right\rangle \nonumber \\ [0.05in]
&\ge& \left\langle \triangle {\bf x}, {\cal L}^{''}_{{\bf x}{\bf x}}(\overline{\bf x},\overline{\bf y},\overline{Y},\overline{Z})\triangle {\bf x} \right\rangle -\Upsilon^1_{g_1(\overline{\bf x})}\left(\overline{Y},g_1'(\overline{\bf x})\triangle {\bf x}\right)-\Upsilon^2_{g_2(\overline{\bf x})}\left(\overline{Z},g_2'(\overline{\bf x})\triangle {\bf x}\right). \label{eq:SSOSC-MCP'-inv}
\end{eqnarray}
Since $\triangle {\bf x}\in{\rm app}(\overline{\bf y},\overline{Y},\overline{Z})$, we conclude from \eqref{eq:SSOSC-MCP'} and \eqref{eq:SSOSC-MCP'-inv} that $\triangle {\bf x}=0$ and
\begin{equation}\label{eq:def-Jocbian-F-reduce}
\left[\begin{array}{c}
h'(\overline{\bf x})^{*}\triangle {\bf y}+g_1'(\overline{\bf x})^*\triangle Y+g_2'(\overline{\bf x})^*\triangle Z \\ [3pt]
{\cal S}^1(\triangle Y) \\ [3pt]
{\cal S}^2(\triangle Z)
\end{array}\right]=0.
\end{equation}
 By the assumed constraint nondegeneracy condition \eqref{eq:nondegen}, we know that there exist ${\bf h}\in\mathbb{X}$, $H^1\in{\cal T}^{\rm lin}_{\theta_1}(g_1(\overline{\bf x}))$ and $H^2\in{\rm lin}\,\left({\cal T}_{\cal K}(g_2(\overline{\bf x}))\right)$ such that
\[
h'(\overline{\bf x}){\bf h}=\triangle {\bf y}, \quad g_1'(\overline{\bf x}){\bf h}+H^1=\triangle Y \quad {\rm and} \quad g_2'(\overline{\bf x}){\bf h}+H^2=\triangle Z.
\]
It then follows from the first equation of \eqref{eq:def-Jocbian-F-reduce} that 
\begin{eqnarray*}
&& \langle\triangle {\bf y},\triangle {\bf y} \rangle+\langle \triangle Y, \triangle Y \rangle+\langle \triangle Z, \triangle Z \rangle \nonumber \\ [0.05in]
 &=& \langle h'(\overline{\bf x}){\bf h},\,\triangle {\bf y}\rangle+\langle g_1'(\overline{\bf x}){\bf h}+H^1, \, \triangle Y \rangle + \langle g_2'(\overline{\bf x}){\bf h}+H^2, \, \triangle Z \rangle \nonumber \\ [0.05in]
&=&\langle {\bf h}, h'(\overline{\bf x})^{*}\triangle {\bf y}+g_1'(\overline{\bf x})^*\triangle Y+g_2'(\overline{\bf x})^*\triangle Z\rangle + \langle H^1,\triangle Y\rangle + \langle H^2,\triangle Z\rangle \nonumber \\ [0.05in]
&=& \langle H^1,\triangle Y\rangle + \langle H^2,\triangle Z\rangle = \left\langle \overline{U}^\top H^1\overline{U}, \, \overline{U}^\top \triangle Y\overline{U}\right\rangle + \left\langle \overline{V}^\top H^2\overline{V}, \, \overline{V}^\top \triangle Z\overline{V}\right\rangle. 
\end{eqnarray*}
Combining Proposition \ref{prop:chara-Tlin}, Proposition \ref{prop:chara-Tlin2}, Lemma \ref{lemma:V-in-affC} and Lemma \ref{lemma:V-in-affC2},  we derive
\begin{eqnarray*}
	\left\langle \overline{U}^\top H^1\overline{U},\overline{U}^\top \triangle Y\overline{U}\right\rangle &=& \sum_{1\le l\le r} \sum_{k\in {\cal E}^l}\left\langle \overline{U}_{\beta^l_k}^\top H^1\overline{U}_{\beta^l_k}, \,\overline{U}_{\beta^l_k}^\top \triangle Y\overline{U}_{\beta^l_k} \right\rangle \\ [3pt]
	&=& \sum_{1\le l\le r} \sum_{k\in {\cal E}^l} \left\langle \widehat{\rho}_l \, {\bf e}_{|\beta_k^l|}, \, {\rm diag}\left((R^l_{k})^\top  \overline{U}^\top _{\beta^l_k}\,\triangle Y\,\overline{U}_{\beta^l_k}R^l_{k}\right)\right\rangle=0
\end{eqnarray*}
and
\begin{eqnarray*}
\left\langle \overline{V}^\top H^2\overline{V},\, \overline{V}^\top \triangle Z\overline{V}\right\rangle &=& \sum_{1\le l\le r} \sum_{k\in {\cal F}^l}\left\langle\, \overline{V}_{\gamma^l_k}^\top H^2\overline{V}_{\gamma^l_k},\, \overline{V}_{\gamma^l_k}^\top \triangle Z\overline{V}_{\gamma^l_k} \,\right\rangle \\ [3pt]
&=& \sum_{1\le l\le r} \sum_{k\in {\cal F}^l}\left\langle \,\widetilde{\rho}_l \, {\bf e}_{|\gamma_k^l|}, \, {\rm diag}\left((R^l_{k})^\top  \overline{V}^\top _{\gamma^l_k}\,\triangle Z\,\overline{V}_{\gamma^l_k}R^l_{k}\right)\,\right\rangle=0.
\end{eqnarray*}
Therefore, we get
\[
\langle\triangle {\bf y},\triangle {\bf y} \rangle+\langle \triangle Y, \triangle Y \rangle+\langle \triangle Z, \triangle Z \rangle= 0,
\]
which implies that ${\bm W}\in\partial F(\overline{\bf x},\overline{\bf y},\overline{Y},\overline{Z})$ is nonsingular.

``(ii) $\Longrightarrow$ (iii)'' By Clarke's inverse function theorem \cite{Clarke76,Clarke83}, we know that $F$ is a locally Lipschitz homeomorphism near $(\overline{\bf x},\overline{\bf y},\overline{Y},\overline{Z})$. Thus, we know from Lemma \ref{lem:equivalence-strong-regularity-Lip-Homeomorphism} that $(\overline{\bf x},\overline{\bf y},\overline{Y},\overline{Z})$ is a strongly regular solution of the generalized equation \eqref{eq:GE-KKT}. \qed
\end{proof}

It has been proved in \cite{Sun06} that for the nonlinear semidefinite programming problem, the 
three statements in Theorem \ref{prop:equivalent-1} are actually equivalent. Regrettably, we fail to establish the implication (iii) $\Longrightarrow$ (i) for the general CMatOPs. Nevertheless, for a class of special nonsmooth matrix optimization problem involving the largest eigenvalue, we can show the equivalence of these three conditions via the relationship between the so-called uniform quadratic growth condition and the strong second-order sufficient condition. This is the subject of the next section.

\section{Applications to CMatOPs involving the largest eigenvalue}\label{sec:example}
In this section, we apply our obtained results to a class of CMatOPs involving the largest eigenvalue of a symmetric matrix. Relying on its special structure, we improve the results in Theorem \ref{prop:equivalent-1} by showing that the three statements therein are actually equivalent.

Specifically,  we consider the following problem
\begin{equation}\label{opt-eq-eig}
\begin{array}{cl}
\displaystyle\operatornamewithlimits{minimize}_{{\bf x}\in \mathbb{X}} &\; f({\bf x})  + \lambda_1(g({\bf x}))\\[0.1in] 
\mbox{subject to} &\; h({\bf x})=0,
\end{array}
\end{equation}
where $\lambda_1(g({\bf x}))$ denotes the largest eigenvalue of a symmetric matrix $g({\bf x})$.
This corresponds to a special case of problem \eqref{opt} where 
\[
\phi({\bf x})=\max_{1\le i\le p}\left\{\langle {\bf a}^i,{\bf x}\rangle \right\},\quad {\bf x}\in \mathbb{R}^n
\]
with ${\bf a}^i$ being the unit vector whose $i$-th component is 1 and others are zero. Based on the formulas derived in Section \ref{sec: spectral functions}, we get the following results.

\vskip 10 true pt 
\noindent
\underline{\bf The tangent cone and its lineality space}. 
Recall the definitions of $\{\alpha^l\}$ in \eqref{eq:ak-symmetric}
and $\iota_1(\lambda(\overline{X}))$  in \eqref{index1}. We have
\[
\iota_1(\lambda(\overline{X}))=\left\{1\le i\le n\mid \lambda_i(\overline{X})=\bar{v}_1 \right\}=\alpha^1.
\]
It follows from Proposition \ref{prop:chara-Tlin}  that
\[
H\in {\cal T}^{\rm lin}_{\theta_1}(\overline{X}) \;\Longleftrightarrow\; \left[\, \lambda'_i(\overline{X};H)=\lambda'_j(\overline{X};H),\quad \forall\,i,j\in\alpha^1 \,\right].
\]
Based on Lemma \ref{prop:eigenvalue_diff}, the above right-side is further equivalent to the existence of  a scalar $\widehat{\rho}$ such that
\[
\overline{U}^\top _{\alpha^1}H\overline{U}_{\alpha^1}=\widehat{\rho}\,I_{|\alpha^1|} \;\; \mbox{for some $\overline{U}\in\mathbb{O}^n(\overline{X})$}.
\]
The value of $\widehat{\rho}$ is independent of the selected orthogonal matrix  $\overline{U}$ in $\mathbb{O}^n(\overline{X})$ (\cite[Proposition 2]{DSSToh14}).

\vskip 10 true pt 
\noindent
\underline{\bf The critical cone}. Given $(\overline{X},\overline{Y})\in {\rm gph}\,\partial\lambda_1$ and let $\overline{U}\in \mathbb{O}^n(\overline{X})\cap \mathbb{O}^n(\overline{Y})$. It follows from Lemma \ref{lemma:subdiff-spectral-function} and \cite[Lemma 2.2]{WDSToh14} (see also \cite[Lemma 3.1]{OWomersley93}) that 
\begin{equation}\label{eq:lambda-Y-max}
\left\{\begin{array}{l}
	0\le  \lambda_i(\overline{Y}) \le 1,\quad \forall\,i\in \alpha^1\quad {\rm and}\quad   \displaystyle\sum_{i\in\alpha^1}\lambda_i(\overline{Y})=1,   \\ [3pt]
	\lambda_i(\overline{Y})=0,\quad  \forall\,i\in \alpha^l,\quad  l=2,\ldots,r.
\end{array} \right.
\end{equation}
Denote
\begin{equation}\label{eq:def-mu-nv}
	\mu:=\{i\in\alpha^1\mid \lambda_i(\overline{Y})>0\} \quad {\rm and}\quad \nu:=\{i\in\alpha^1\mid \lambda_i(\overline{Y})=0\}.
\end{equation}
For each $l\in\{1,\ldots,r\}$, we further partition  the index set $\alpha^l$ by  $\{\beta_k^l\}_{k=1}^{s_l}$ as in \eqref{eq:def-beta-index} based on  $\lambda(\overline{Y})$. We then obtain from \eqref{eq:lambda-Y-max} that 
\[
\iota_1(\lambda(\overline{X}))=\alpha^1=\mu\cup\nu,\quad \mu=\bigcup_{k=1}^{s^1-1}\beta_k^1, \quad \nu=\beta_{s^1}^1  \quad {\rm and} \quad \alpha^l=\beta_1^l  \quad {\rm for}\quad l=2,\ldots,r. 
\]
Recall the index set $\eta_1(\lambda(\overline{X}),\lambda(\overline{Y}))\subseteq \iota_1(\lambda(\overline{X}))$ defined in \eqref{eq:def-index-eta-1}. Obviously $\eta_1(\lambda(\overline{X}),\lambda(\overline{Y}))=\mu$. It follows from Proposition \ref{prop:critical_cone-eq}  that
\[
H\in {\cal C}(\overline{X} + \overline{Y};\partial\lambda_1(\overline{X})) \;\Longleftrightarrow	\; \left[\, \left({\rm diag}(\overline{U}^\top H\overline{U}) \right)_i= \lambda_1(\overline{U}^\top_{\alpha^1} H\overline{U}_{\alpha^1}),  \quad \forall\, i \in \mu\,\right].
\] 
One can also derive from Proposition \ref{prop:critical_cone_aff} that 
\begin{equation}\label{eq:critical-aff-max1}
H\in {\rm aff}\,({\cal C}(A;\partial\lambda_1(\overline{X}))) \;\Longleftrightarrow \;   \overline{U}^\top_{\alpha^1} H\overline{U}_{\alpha^1}=\begin{bmatrix}
	\rho\, I_{|\mu|} & 0 \\ 
	0 & \overline{U}^\top_{\nu} H\overline{U}_{\nu}
\end{bmatrix}\;\; \mbox{for some $\rho \in \mathbb{R}$}.
\end{equation}

\noindent
\underline{\bf The sigma term}.  Also given $(\overline{X},\overline{Y})\in {\rm gph}\,\partial\lambda_1$ and let $\overline{U}\in \mathbb{O}^n(\overline{X})\cap \mathbb{O}^n(\overline{Y})$. Denote $\omega:=\displaystyle \bigcup_{l=2}^r\alpha^l$. By noting that $\lambda_i(\overline{Y})=0$ for any $i\in \nu\cup \omega$, we derive from  \eqref{eq:Upsilon1-eq} and \eqref{eq:lambda-Y-max} that 
\begin{equation}\label{eq:Upsilon-eig}
\Upsilon^1_{\overline{X}}(\overline{Y},H)=-2\sum_{i\in \mu}\sum_{j\in \omega}\frac{\lambda_i(\overline{Y})}{\lambda_i(\overline{X})-\lambda_j(\overline{X})}(\overline{U}_{\mu}^\top H\overline{U}_{\omega})_{ij}^2\le 0,\quad H\in\mathbb{S}^n.
\end{equation}

\gap

In the rest of this section, we show that the strong regularity of the generalized equation for the KKT system at a local optimal optimal of problem \eqref{opt-eq-eig} implies the strong second-order sufficient condition and the constraint nondegeneracy at the same point, i.e., the three statements in  Theorem \ref{prop:equivalent-1} are equivalent.

Given a feasible point $\overline{\bf x}$ of problem \eqref{opt-eq-eig}, we say Robinson's constraint qualification (CQ) \cite{Robinson76} at $\overline{\bf x}$ holds if
\begin{equation}\label{eq:RCQ}
h'(\overline{\bf x})\,\mathbb{X}=\mathbb{Y}.
\end{equation} 
It has been proved in \cite[Proposition 3.3]{CDZ2017} that the function $\lambda_1(\,\bullet\,)$ is ${\cal C}^2$-cone reducible at any point so that the  set ${\rm epi}\,\lambda_1$ is second-order regular  \cite[Proposition 3.136]{BShapiro00} (see \cite[Definitions 3.85 \& 3.135]{BShapiro00} for the definitions of ${\cal C}^2$-cone reducibility and second-order regularity). One can then obtain the following second-order necessary and sufficient conditions of \eqref{opt-eq-eig} by adapting the proof of \cite[Theorems 3.45 \& 3.86]{BShapiro00}. For brevity, we omit the detailed proof here.

\begin{proposition}\label{thm:second-order-opt}
	Suppose that $\overline{\bf x}\in \mathbb{X}$ is a local optimal solution of \eqref{opt-eq-eig} and Robinson's CQ \eqref{eq:RCQ} holds at $\overline{\bf x}$. Then the following second-order necessary condition holds at $\overline{\bf x}$:
\[
	\sup_{(\overline{\bf y},\overline{Y})\in{\cal M}(\overline{\bf x})}\left\{\langle {\bf d}, {\cal L}''_{{\bf x}{\bf x}}(\overline{\bf x},\overline{\bf y},\overline{Y}){\bf d}\rangle -\Upsilon_{g(\overline{\bf x})}^1\left(\overline{Y},g'(\overline{\bf x}){\bf d}\right) \right\} \ge 0, \quad \forall\, {\bf d}\in {\cal C}(\overline{\bf x}),
\]
where $\Upsilon_{g(\overline{\bf x})}^1$ is given by \eqref{eq:Upsilon-eig}.	Conversely, let $\overline{\bf x}$ be a feasible point of  \eqref{opt-eq-eig} and assume Robinson's CQ \eqref{eq:RCQ} holds at $\overline{\bf x}$. Then the following condition 
	\[
	\sup_{(\overline{\bf y},\overline{Y})\in{\cal M}(\overline{\bf x})}\left\{\langle {\bf d}, {\cal L}''_{{\bf x}{\bf x}}(\overline{\bf x},\overline{\bf y},\overline{Y}){\bf d}\rangle -\Upsilon_{g(\overline{\bf x})}^1\left(\overline{Y},g'(\overline{\bf x}){\bf d}\right) \right\} > 0, \quad \forall\, {\bf d}\in {\cal C}(\overline{\bf x})\setminus\{0\}
\]
	is necessary and sufficient for the existence of a positive scalar $\rho$ and a neighborhood $\mathcal{N}$ of $\overline{\bf x}$ such that
	\begin{equation}\label{eq:quadratic growth}
		f({\bf x})+\lambda_1(g({\bf x}))\ge f(\overline{\bf x})+\lambda_1(g(\overline{\bf x}))+\rho\|{\bf x}-\overline{\bf x}\|^2\quad \forall\,{\bf x}\in \mathcal{N} \ \mbox{such that $h({\bf x})=0$}.
\end{equation}
\end{proposition}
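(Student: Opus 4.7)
I would approach Proposition \ref{thm:second-order-opt} via the epigraphical reformulation of \eqref{opt-eq-eig} as the matrix conic program
\[
\min_{(\mathbf{x},t)\in\mathbb{X}\times\mathbb{R}}\ f(\mathbf{x})+t \quad \text{s.t.}\quad h(\mathbf{x})=0,\ (g(\mathbf{x}),t)\in\mathrm{epi}\,\lambda_1,
\]
and then apply the general second-order theory for $C^2$-cone-reducible problems of Bonnans and Shapiro.

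The first step is to check the prerequisites. By \cite[Proposition 3.3]{CDZ2017} the function $\lambda_1$ is $C^2$-cone reducible at every $\overline{X}\in\mathbb{S}^n$, hence $\mathrm{epi}\,\lambda_1$ is $C^2$-cone reducible and in particular second-order regular at $(g(\overline{\bf x}),\lambda_1(g(\overline{\bf x})))$ in the sense of \cite[Definition 3.85]{BShapiro00}. Robinson's CQ for the lifted problem is implied by \eqref{eq:RCQ}, since the auxiliary coordinate $t$ always provides slack in the epigraphical inclusion. A routine computation shows that the Lagrange multiplier set of the lifted problem is $\{(\overline{\bf y},\overline Y,-1):(\overline{\bf y},\overline Y)\in\mathcal M(\overline{\bf x})\}$, and that the critical cone coincides with $\{(\mathbf{d},\lambda_1'(g(\overline{\bf x});g'(\overline{\bf x})\mathbf{d})):\mathbf{d}\in\mathcal C(\overline{\bf x})\}$.

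Second, I would invoke the abstract second-order necessary condition \cite[Theorem 3.45]{BShapiro00} and the abstract second-order sufficient condition for uniform quadratic growth \cite[Theorem 3.86]{BShapiro00}; both apply here thanks to the second-order regularity of $\mathrm{epi}\,\lambda_1$ established above. Since the lifted objective is linear in $t$, the quadratic form in the lifted direction $(\mathbf{d},\tau)$ collapses to $\langle\mathbf{d},\mathcal L''_{\mathbf{xx}}(\overline{\bf x},\overline{\bf y},\overline Y)\mathbf{d}\rangle$, while the sigma term produced by these theorems takes the form
\[
\sigma\!\left((\overline Y,-1),\ T^2_{\mathrm{epi}\,\lambda_1}\big((g(\overline{\bf x}),\lambda_1(g(\overline{\bf x})));(g'(\overline{\bf x})\mathbf{d},\tau)\big)\right).
\]

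The main (and essentially only substantive) obstacle is to identify this sigma term with $-\Upsilon^1_{g(\overline{\bf x})}(\overline Y,g'(\overline{\bf x})\mathbf{d})$. For the specific choice $\phi_1(\mathbf u)=\max_i u_i$ (so that $\theta_1=\phi_1\circ\lambda=\lambda_1$), Proposition \ref{prop:sigma-term-1} evaluates the conjugate $\digamma^*_{g(\overline{\bf x}),g'(\overline{\bf x})\mathbf{d}}(\overline Y)$ as precisely the right-hand side of \eqref{eq:Upsilon-eig}. The hard part is arguing that this conjugate coincides with the Bonnans--Shapiro sigma term displayed above: one uses the definition of second-order tangents to epigraphs, together with the first-order complementarity $\overline Y\in\partial\lambda_1(g(\overline{\bf x}))$, which forces the spectral distribution \eqref{eq:lambda-Y-max} and thereby collapses the support-function computation to the clean block-diagonal form in \eqref{eq:Upsilon-eig}. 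Once this equivalence is in hand, both the necessary condition and the characterization of the uniform quadratic growth \eqref{eq:quadratic growth} follow directly from \cite[Theorems 3.45 \& 3.86]{BShapiro00}.
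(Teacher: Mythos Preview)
Your proposal is correct and follows essentially the same route the paper itself sketches: invoke the $C^2$-cone reducibility of $\lambda_1$ from \cite[Proposition 3.3]{CDZ2017}, deduce second-order regularity of $\mathrm{epi}\,\lambda_1$ via \cite[Proposition 3.136]{BShapiro00}, and then apply \cite[Theorems 3.45 \& 3.86]{BShapiro00} to the epigraphical reformulation, identifying the resulting sigma term with $-\Upsilon^1_{g(\overline{\bf x})}$ through Proposition \ref{prop:sigma-term-1}. One small slip: in your final sentence you refer to the \emph{uniform} quadratic growth condition, but \eqref{eq:quadratic growth} is the ordinary quadratic growth condition---the uniform version enters only afterwards in the paper.
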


The inequality \eqref{eq:quadratic growth} is usually called the quadratic growth condition at $\bar{x}$ of problem \eqref{opt-eq-eig}. In the conventional nonlinear programming,  there is a stronger concept termed uniform quadratic growth condition  \cite[Definition 5.16]{BShapiro00}. Let $\mathbb{U}$ be a Banach space and consider functions $f:\mathbb{X}\times\mathbb{U}\to \mathbb{R}$, $g:\mathbb{X}\times\mathbb{U}\to\mathbb{S}^n$ and $h:\mathbb{X}\times\mathbb{U}\to \mathbb{Y}$. We say that $(f({\bf x},{\bf u}),g({\bf x},{\bf u}),h({\bf x},{\bf u}))$ 
is a $C^2$-smooth parameterization of \eqref{opt-eq-eig} if $f(\,\bullet\,,\,\bullet\,)$, $g(\,\bullet\,,\,\bullet\,)$ and $h(\,\bullet\,,\,\bullet\,)$ are twice continuously differentiable and there exits  $\overline{\bf u}\in\mathbb{U}$ such that $f(\,\bullet\,,\overline{\bf u})\equiv f(\,\bullet\,)$, $g(\,\bullet\,,\overline{\bf u})\equiv g(\,\bullet\,)$ and $h(\,\bullet\,,\overline{\bf u})\equiv h(\,\bullet\,)$.
Let $\overline{\bf x}\in\mathbb{X}$ be a stationary point of problem \eqref{opt-eq-eig}. We say that the uniform quadratic growth condition holds at $\overline{\bf x}$ with respect to a $C^2$-smooth parameterization $(f({\bf x},{\bf u}),g({\bf x},{\bf u}),h({\bf x},{\bf u}))$ if there exist $\rho>0$ and neighborhoods ${\cal V}$ of $\overline{\bf x}$ and ${\cal U}$ of $\overline{\bf u}$ such that for any ${\bf u}\in {\cal U}$ and any stationary point ${\bf x}({\bf u})\in {\cal V}$ of the corresponding parameterized problem, the following holds:
	\[
	f({\bf x},{\bf u})+\lambda_1(g({\bf x},{\bf u}))\ge f({\bf x}({\bf u}),{\bf u})+\lambda_1(g({\bf x}({\bf u}),{\bf u}))+\rho\|{\bf x}-{\bf x}({\bf u})\|^2, \quad \forall\,{\bf x}\in {\cal V}\;\, \mbox{such that $h({\bf x},{\bf u})=0$}.
	\]
	We say that the uniform quadratic growth condition holds at $\overline{\bf x}$ if the above inequality holds for every $C^2$-smooth parameterization of \eqref{opt-eq-eig}.

While Proposition \ref{thm:second-order-opt} indicates the equivalence of the quadratic growth condition and the second-order sufficient condition, 
the following proposition shows that the uniform quadratic growth condition of problem \eqref{opt-eq-eig} at a stationary solution implies the strong second-order sufficient condition at that point. Its proof is similar to \cite[Lemma 4.1]{Sun06} for the nonlinear semidefinite programming problem.
\begin{proposition}\label{prop:uniform-sog}
	Let $\overline{\bf x}\in\mathbb{X}$ be a stationary point of problem \eqref{opt-eq-eig}. Suppose that Robinson's CQ \eqref{eq:RCQ}  holds at $\overline{\bf x}$. If the uniform quadratic growth condition holds at $\overline{\bf x}$, then the strong second-order sufficient condition \eqref{eq:ssoc} holds at $\overline{\bf x}$.
\end{proposition}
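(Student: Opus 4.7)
The plan is to argue by contradiction, following the strategy developed for nonlinear semidefinite programming in \cite[Lemma 4.1]{Sun06}. Suppose the strong second-order sufficient condition \eqref{eq:ssoc} fails at $\overline{\bf x}$. Then there exists a nonzero direction $\bar{\bf d}\in\bigcap_{(\overline{\bf y},\overline{Y})\in{\cal M}(\overline{\bf x})}{\rm app}(\overline{\bf y},\overline{Y})$ such that
\[
\langle \bar{\bf d},{\cal L}''_{{\bf x}{\bf x}}(\overline{\bf x},\overline{\bf y},\overline{Y})\bar{\bf d}\rangle-\Upsilon_{g(\overline{\bf x})}^1(\overline{Y},g'(\overline{\bf x})\bar{\bf d})\le 0
\qquad \forall\,(\overline{\bf y},\overline{Y})\in{\cal M}(\overline{\bf x}).
\]
First I would introduce the canonical $C^{2}$-smooth parameterization
\[
f({\bf x},{\bf u}_1):=f({\bf x})-\langle {\bf u}_1,{\bf x}\rangle,\quad g({\bf x},U_2):=g({\bf x})+U_2,\quad h({\bf x},{\bf u}_3):=h({\bf x})-{\bf u}_3,
\]
with unperturbed value $\overline{\bf u}=(0,0,0)\in\mathbb{X}\times\mathbb{S}^{n}\times\mathbb{Y}$. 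Robinson's CQ is stable under such perturbations, so stationary points of the perturbed problem exist and vary continuously near $\overline{\bf x}$ for all sufficiently small ${\bf u}$.

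Next I would design a smooth tilt ${\bf u}(t)$ with ${\bf u}(0)=0$ along which the perturbed KKT system admits a stationary curve ${\bf x}(t)=\overline{\bf x}+t\bar{\bf d}+o(t)$ carrying multipliers $(\overline{\bf y}(t),\overline{Y}(t))$ converging to some $(\overline{\bf y}^{*},\overline{Y}^{*})\in{\cal M}(\overline{\bf x})$. The existence of such a tilt relies on $\bar{\bf d}$ belonging to the approximation set: by \eqref{eq:critical-aff-max1} the projection $\overline{U}_{\alpha^{1}}^{\top}g'(\overline{\bf x})\bar{\bf d}\,\overline{U}_{\alpha^{1}}$ admits an affine-hull block structure, which lets one select ${\bf u}(t)$ so that the perturbed spectral inclusion in \eqref{eq:KKT-opt} is solvable with the desired first-order expansion of ${\bf x}(t)$ and with multipliers in ${\cal M}(\overline{\bf x})$ in the limit.

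I would then invoke the uniform quadratic growth at $\overline{\bf x}$: for some $\rho>0$ independent of $t$, inequality \eqref{eq:quadratic growth} holds at ${\bf x}(t)$ for the perturbed data with modulus $\rho$. By the converse half of Proposition \ref{thm:second-order-opt} applied to the perturbed problem at ${\bf x}(t)$, the corresponding second-order sufficient inequality is available with modulus $2\rho$. Choosing the test direction $\bar{\bf d}$, subtracting the contribution of ${\bf u}(t)$ (which is $O(t)$ in first order and contributes nothing to the Hessian at $t=0$ since the parameterization is linear in ${\bf u}$), and passing to the limit $t\downarrow 0$ using continuity of ${\cal L}''_{{\bf x}{\bf x}}$ and the explicit expression \eqref{eq:Upsilon-eig} for $\Upsilon^{1}$, I would obtain
\[
\langle\bar{\bf d},{\cal L}''_{{\bf x}{\bf x}}(\overline{\bf x},\overline{\bf y}^{*},\overline{Y}^{*})\bar{\bf d}\rangle-\Upsilon_{g(\overline{\bf x})}^{1}(\overline{Y}^{*},g'(\overline{\bf x})\bar{\bf d})\ge 2\rho\|\bar{\bf d}\|^{2}>0,
\]
which contradicts the standing assumption and hence establishes the strong second-order sufficient condition.

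The hardest step will be the middle one, namely producing the tilt ${\bf u}(t)$ whose perturbed KKT system admits a stationary curve with the prescribed first-order expansion $\overline{\bf x}+t\bar{\bf d}+o(t)$ and multipliers limiting into ${\cal M}(\overline{\bf x})$. This requires a careful use of the affine-hull characterization \eqref{eq:critical-aff-max1} together with the directional differentiability and piecewise-affine structure of the proximal mapping ${\rm Pr}_{\lambda_{1}}$ (cf. Proposition \ref{prop:MY-phi-direction-diff} and Lemma \ref{prop:subdiff-theta}), so as to guarantee that the spectral inclusion $\overline{Y}(t)\in\partial\lambda_{1}(g({\bf x}(t))+U_{2}(t))$ can be solved smoothly in $t$.
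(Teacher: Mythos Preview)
Your approach diverges from the paper's in a fundamental way, and the step you flag as ``hardest'' is in fact not the route taken in \cite[Lemma 4.1]{Sun06} or here. Both avoid constructing a moving stationary curve ${\bf x}(t)$ altogether.

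The paper's argument fixes an arbitrary $(\overline{\bf y},\overline{Y})\in{\cal M}(\overline{\bf x})$, takes $\overline{U}\in\mathbb{O}^n(g(\overline{\bf x}))\cap\mathbb{O}^n(\overline{Y})$, and considers the single $C^2$-parameterized family
\[
\min_{\bf x}\ f({\bf x})+\lambda_1\big(g({\bf x})-\tau\,\overline{U}_\nu\overline{U}_\nu^\top\big)\quad\mbox{subject to}\ h({\bf x})=0,
\]
where $\nu=\{i\in\alpha^1\mid\lambda_i(\overline{Y})=0\}$. The shift $-\tau\,\overline{U}_\nu\overline{U}_\nu^\top$ pushes down exactly those leading eigenvalues carrying zero multiplier weight, so for all small $\tau>0$ the active index set at $\overline{\bf x}$ collapses to $\mu=\eta_1(\lambda(\overline{X}),\lambda(\overline{Y}))$. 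Two consequences follow immediately: (i) $\overline{\bf x}$ remains a stationary point of the perturbed problem with the \emph{same} multiplier $(\overline{\bf y},\overline{Y})$, so ${\cal M}_\tau(\overline{\bf x})\subseteq{\cal M}(\overline{\bf x})$; and (ii) because now $\iota_1=\eta_1=\mu$, the perturbed critical cone ${\cal C}_\tau(\overline{\bf x})$ \emph{contains} ${\rm app}(\overline{\bf y},\overline{Y})$ (compare Propositions \ref{prop:critical_cone-eq} and \ref{prop:critical_cone_aff}). The uniform quadratic growth then yields, via Proposition \ref{thm:second-order-opt}, the strict second-order inequality over all of ${\cal C}_\tau(\overline{\bf x})\supseteq{\rm app}(\overline{\bf y},\overline{Y})$, and a direct check from \eqref{eq:Upsilon-eig} shows $\Upsilon^1_{(\overline{X}-\tau\overline{U}_\nu\overline{U}_\nu^\top)}(\overline{Y},\cdot)=\Upsilon^1_{\overline{X}}(\overline{Y},\cdot)$ since $\lambda_i(\overline{Y})=0$ on $\nu$. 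No limit is taken; the strong SOSC follows at once.

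Your proposal, by contrast, leaves two genuine gaps. First, the construction of a perturbation ${\bf u}(t)$ under which a stationary curve exits $\overline{\bf x}$ tangent to $\bar{\bf d}$ with multipliers converging into ${\cal M}(\overline{\bf x})$ is not carried out, and the tools you cite (Proposition \ref{prop:MY-phi-direction-diff}, Lemma \ref{prop:subdiff-theta}) do not by themselves deliver such a curve. Second, even granting the curve, you must show that $\bar{\bf d}$ lies in the critical cone of the perturbed problem at ${\bf x}(t)$ in order to test it in the second-order inequality there; you do not address this, and it is not automatic. The eigenvalue-shift device sidesteps both difficulties by keeping the stationary point fixed and enlarging the critical cone to swallow ${\rm app}(\overline{\bf y},\overline{Y})$ directly.
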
 
\begin{proof}
	Denote $\overline{X}:=g(\overline{\bf x})$. Let $(\overline{\bf y},\overline{Y})\in {\cal M}(\overline{\bf x})$ and  $\overline{U}\in\mathbb{O}^n(\overline{X})\cap\mathbb{O}^n(\overline{Y})$. 
	 Recall the index sets $\mu$ and $\nu$ defined by \eqref{eq:def-mu-nv}.
Given a positive scalar $\tau$, we consider the following  problem:
\[
\begin{array}{cl}
\displaystyle\operatornamewithlimits{minimize}_{{\bf x}\in \mathbb{X}} &\; f({\bf x})  + \lambda_1\left(g({\bf x})-\tau\overline{U}_{\nu}\overline{U}_{\nu}^\top \right)\\[0.1in] 
\mbox{subject to} &\; h({\bf x})=0.
\end{array}
\]
Let ${\cal M}_{\tau}(\overline{\bf x})$ be the set of all $({\bf y},Y)\in \mathbb{Y}\times \mathbb{S}^n$ such that $(\overline{\bf x}, {\bf y}, Y)$ satisfies the  KKT optimality condition of the above problem, and ${\cal C}_{\tau}(\overline{\bf x})$ be the the critical cone defined in \eqref{eq:def-critical-MOP}  with respect to  the above problem. 
For all sufficiently small $\tau$, we have 
\[
\iota_1\left(\lambda\left(\overline{X}-\tau\overline{U}_{\nu}\overline{U}_{\nu}^\top \right)\right) \, \equiv \, \eta_1(\lambda(\overline{X}),\lambda(\overline{Y})) \, = \, \mu,
\]
where the set $\iota_1$ and $\eta_1$ are defined in \eqref{index1} and \eqref{eq:def-index-eta-1}.  It then follows from 
\eqref{eq:lambda-Y-max} and the above equality that for such sufficiently small $\tau$, $(\overline{\bf y}, \overline{Y})\in {\cal M}_{\tau}(\overline{\bf x})\subseteq {\cal M}(\overline{\bf x})$. 
In addition, we know from Propositions \ref{prop:critical_cone-eq} and  \ref{prop:critical_cone_aff} that   
\[
	{\cal C}_{\tau}(\overline{\bf x})\supseteq {\rm app}(\overline{\bf y},\overline{Y}),
\]
where ${\rm app}(\overline{\bf y},\overline{Y})$ is defined in \eqref{eq:def-app}.
Therefore, Proposition \ref{thm:second-order-opt} implies that for all sufficiently small $\tau$,
\[
	\sup_{({\bf y},Y)\in{\cal M}_{\tau}(\overline{\bf x})}\left\{\langle {\bf d}, {\cal L}''_{{\bf x}{\bf x}}(\overline{\bf x},{\bf y},Y){\bf d}\rangle -\Upsilon_{\left(\overline{X}-\tau\overline{U}_{\nu}\overline{U}_{\nu}^\top\right)}^1\left(Y,g'(\overline{\bf x}){\bf d}\right) \right\} > 0, \quad \forall \; {\bf d}\in {\cal C}_{\tau}(\overline{\bf x})\setminus\{0\}.
\]
Since $\lambda_i(\overline{Y})=0$ for all $i\in \nu$, we obtain from \eqref{eq:critical-aff-max1} and \eqref{eq:Upsilon-eig} that
\[
		\Upsilon_{\left(\overline{X}-\tau\overline{U}_{\nu}\overline{U}_{\nu}^\top\right)}^1\left(\overline{Y},g'(\overline{\bf x}){\bf d}\right)=\Upsilon_{\overline{X}}^1\left(\overline{Y},g'(\overline{\bf x}){\bf d}\right).
\]
Combining the above derivations together, we know that the 
 strong second-order sufficient condition \eqref{eq:ssoc} holds at ${\bf \bar{x}}$. \qed
\end{proof}

By considering the epigraphical formulation of \eqref{opt-eq-eig}  and following the proof of  \cite[Theorem 5.20]{BShapiro00}, one can show that the  strong regularity of a KKT solution $(\overline{\bf x},\overline{\bf y},\overline{Y})$ implies the uniform quadratic growth condition at $\overline{\bf x}$. We again omit the proof here. 

Now we are ready to present the main result of this section pertaining to the necessary and sufficient conditions for the strong regularity of problem \eqref{opt-eq-eig}, which can be directly obtained by combining Theorem \ref{prop:equivalent-1} and Proposition \ref{prop:uniform-sog}.

\begin{theorem}
	Let $\overline{\bf x}\in\mathbb{X}$ be a locally optimal solution of problem \eqref{opt-eq-eig}. Suppose that Robinson's CQ \eqref{eq:RCQ}  holds at $\overline{\bf x}$. Let $(\overline{\bf y},\overline{Y})\in {\cal M}(\overline{\bf x})$. Then the following statements are equivalent:
\vskip 0.1in
\noindent
(i) the strong second-order sufficient condition \eqref{eq:ssoc} and constraint nondegeneracy \eqref{eq:nondegen}  hold at $\overline{\bf x}$;
\vskip 0.1in
\noindent
(ii) every element in $\partial F(\overline{\bf x},\overline{\bf y},\overline{Y})$ is nonsingular;
\vskip 0.1in
\noindent
(iii) $(\overline{\bf x},\overline{\bf y},\overline{Y})$ is a strongly regular solution of the generalized equation for \eqref{opt-eq-eig}.
\vskip 0.1in
\noindent
(iv) The uniform quadratic growth condition and constraint nondegeneracy \eqref{eq:nondegen}  hold at $\overline{\bf x}$.

\end{theorem}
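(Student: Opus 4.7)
The plan is to assemble the four conditions into a cycle of implications
\begin{equation*}
	\mbox{(i)} \ \Longrightarrow \ \mbox{(ii)} \ \Longrightarrow \ \mbox{(iii)} \ \Longrightarrow \ \mbox{(iv)} \ \Longrightarrow \ \mbox{(i)},
\end{equation*}
drawing on the earlier results of this paper and classical conic-reducibility machinery. Problem \eqref{opt-eq-eig} is a special case of \eqref{opt-eq} in which the cone constraint $g_2({\bf x})\in {\cal K}$ is absent (formally, ${\cal K}=\mathbb{S}^n$, $\overline{Z}=0$, and $\Upsilon^2\equiv 0$), so the first two implications (i) $\Longrightarrow$ (ii) $\Longrightarrow$ (iii) fall out immediately from Theorem \ref{prop:equivalent-1}.

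For (iv) $\Longrightarrow$ (i), the plan is to invoke Proposition \ref{prop:uniform-sog}: under Robinson's CQ \eqref{eq:RCQ}, the uniform quadratic growth condition at $\overline{\bf x}$ yields the strong second-order sufficient condition \eqref{eq:ssoc}, while constraint nondegeneracy \eqref{eq:nondegen} is already part of (iv). Both components of (i) are thus recovered.

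The substantive step is (iii) $\Longrightarrow$ (iv), which splits in two. For the uniform quadratic growth part, I would follow exactly the route signposted in the paragraph preceding the theorem: lift \eqref{opt-eq-eig} via its epigraphical reformulation to a matrix conic program whose constraint set is ${\cal C}^2$-cone reducible (using $\lambda_1$'s ${\cal C}^2$-cone reducibility from \cite[Proposition 3.3]{CDZ2017}), then apply the adaptation of \cite[Theorem 5.20]{BShapiro00} to conclude that strong regularity of the KKT solution $(\overline{\bf x},\overline{\bf y},\overline{Y})$ forces the uniform quadratic growth condition at $\overline{\bf x}$. For constraint nondegeneracy, I would argue that strong regularity, in particular the local Lipschitzian uniqueness of the triple $(\overline{\bf x},\overline{\bf y},\overline{Y})$ under perturbations of the right-hand side of \eqref{eq:perturb-ge-equation-MCP}, forces the Lagrange multiplier $(\overline{\bf y},\overline{Y})$ to be the unique element of ${\cal M}(\overline{\bf x})$; for a ${\cal C}^2$-cone reducible problem this uniqueness is equivalent, under Robinson's CQ, to constraint nondegeneracy. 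The explicit description of the lineality space ${\cal T}^{\rm lin}_{\theta_1}(g(\overline{\bf x}))$ from Proposition \ref{prop:chara-Tlin} is what connects the abstract dual characterization to the concrete formula \eqref{eq:nondegen}.

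The main obstacle will be the (iii) $\Longrightarrow$ constraint nondegeneracy step: while the template is classical (cf.\ the nonlinear SDP treatment in \cite{Sun06}), one has to verify that the passage through the epigraphical lift and the symmetric piecewise-affine structure of $\phi_1({\bf x})=\max_{1\le i\le n} {\bf x}_i$ carries the dual uniqueness of the multiplier back to the primal-space description \eqref{eq:nondegen}; the characterization of ${\cal T}^{\rm lin}_{\theta_1}$ via Proposition \ref{prop:chara-Tlin}, specialized by \eqref{eq:lambda-Y-max}--\eqref{eq:def-mu-nv} for the largest-eigenvalue case, is the decisive ingredient here. Once these ingredients are in place, the cycle closes and the four-fold equivalence follows.
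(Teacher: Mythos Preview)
Your cycle (i) $\Rightarrow$ (ii) $\Rightarrow$ (iii) $\Rightarrow$ (iv) $\Rightarrow$ (i) is exactly the route the paper takes, and each link is sourced correctly: Theorem~\ref{prop:equivalent-1} for the first two implications, the epigraphical lift plus \cite[Theorem~5.20]{BShapiro00} for (iii) $\Rightarrow$ uniform quadratic growth, and Proposition~\ref{prop:uniform-sog} to close (iv) $\Rightarrow$ (i). The paper's own proof is a single sentence (``combining Theorem~\ref{prop:equivalent-1} and Proposition~\ref{prop:uniform-sog}'') that leaves the (iii) $\Rightarrow$ constraint nondegeneracy step entirely implicit in the Bonnans--Shapiro citation; your explicit identification of that step via uniqueness of the multiplier under strong regularity and ${\cal C}^2$-cone reducibility is the right way to flesh it out, and is consistent with what the paper intends.
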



\section{Conclusion}\label{sec:conclusion}

In this paper, we conduct an extensive study on the characterization of  strong regularity of the KKT solutions for a class of nonsmooth composite matrix optimization  problems (CMatOPs). Due to its non-polyhedrality, the classical perturbation analysis developed for the nonlinear programming has become inadequate for CMatOPs. We have systemically analyzed second-order variational properties of  spectral functions associated with piecewise affine symmetric functions, including the characterizations of their induced tangent sets, lineality spaces, critical cones and the sigma term. These variational results provide the necessary tools for the  characterization of the strong regularity for the general CMatOPs. The work done on CMatOPs in this paper is by no means complete. Due to the rapid advances in matrix optimization applications in emerging fields, we believe that the fundamental perturbation analysis of CMatOPs will become even more important and many other variational properties are waiting to be explored.



\begin{thebibliography}{999}

	
\bibitem{Alizadeh95}
Alizadeh, F.: 
Interior point methods in semidefinite programming with applications to combinatorial optimization. 
SIAM J. Optim. {\bf 5}, 13--51 (1995)

\bibitem{BZowe82}
Ben-Tal, A., Zowe, J.: 
Necessary and sufficient optimality conditions for a class of nonsmooth minimization problems. 
Math. Program. {\bf 24}, 70--91 (1982)
			
\bibitem{BCShapiro98}
Bonnans, J.F., Cominetti, R., Shapiro, A.: 
Sensitivity analysis of optimization problems under second-order regular constraints.
Math. Oper. Res. {\bf 23}, 806--831 (1998)
		
\bibitem{BShapiro00}
Bonnans, J.F.,  Shapiro, A.:
Perturbation Analysis of Optimization Problems.
Springer, New York (2000)
		
\bibitem{Boyd94}		
Boyd, S., El Ghaoui, L., Feron, E.,  Balakrishnan, V.:  
Linear matrix inequalities in system and control theory. 
Vol. 15  Studies in Applied Mathematics,  SIAM, Philadelphia (1994)

\bibitem{BoydDiaconisParriloXiao09}
Boyd, S., Diaconis, P., Parrilo, P.A.,  Xiao, L.:
Fastest mixing Markov chain on graphs with symmetries.
SIAM J. Optim. {\bf 20}, 792--819 (2009)

\bibitem{CandesTao09}
Cand{\'e}s, E.J., Tao, T.: 
The power of convex relaxation: near-optimal matrix completion. 
IEEE Trans. Inf. Theory {\bf 56}, 2053--2080 (2009)

\bibitem{CSun08}
Chan, Z.X., Sun, D.F.: 
Constraint nondegeneracy, strong regularity, and nonsingularity in semidefinite programming. 
SIAM J.  Optim. {\bf 19}, 370--396 (2008)
		
\bibitem{Clarke76}
Clarke, F.H.: 
On the inverse function theorem.
Pacific J. Math {\bf 64}, 97--102 (1976)
		
\bibitem{Clarke83}
Clarke, F.H.:
Optimization and Nonsmooth Analysis.
John Wiley \& Sons, New York (1983)
		
\bibitem{CDZ2017}
Cui, Y.,  Ding, C.,  Zhao, X.Y.: 
Quadratic growth conditions for convex matrix optimization problems associated with spectral functions.
SIAM J. Optim. {\bf 27}, 2332--2355 (2017)	


          
			
		

\bibitem{DSToh14}
Ding, C., Sun, D.F.,  Toh, K.-C.: 
An introduction to a class of matrix cone programming, 
Math. Program. {\bf 144}, 141--179 (2014)
		
\bibitem{DSSToh14}
Ding, C., Sun, D.F.,  Sun, J.,  Toh, K.-C.: 
Spectral operators of matrices.
Math. Program. {\bf 168}, 509--531 (2018)

\bibitem{DSSToh18}
Ding, C., Sun, D.F.,  Sun, J.,  Toh, K.-C.: 
Spectral operators of matrices: semismoothness and characterizations of the generalized Jacobian.
arXiv:1810.09856 (2018)
		


\bibitem{dontchev1996}
Dontchev, A.L., Rockafellar, R.T.:
Characterizations of strong regularity for variational inequalities over polyhedral convex sets.
SIAM J. Optim. {\bf 6}, 1087--1105 (1996)
		
		
\bibitem{FPang2003}
Facchinei, F., Pang, J.-S.: 
Finite-Dimensional Variational Inequalities and Complementarity Problems.
Springer, New York (2003)
		
\bibitem{Fan49}
Fan, K.:
On a theorem of Weyl concerning eigenvalues of affine transformations. 
P. Natl. Acad. Sci. USA. {\bf 35}, 652--655 (1949)
		
				
		
%
		
\bibitem{Lancaster64}
Lancaster, P.:
On eigenvalues of matrices dependent on a parameter.
Numer. Math. {\bf 6}, 377--387 (1964)
			
\bibitem{Lewis1995}
Lewis, A.S.:  
The convex analysis of unitarily invariant matrix functions.
J. Convex Anal.  {\bf 2}, 173--183 (1995)
		

		
\bibitem{lewis1996}
Lewis, A.S.: 
Convex analysis on the Hermitian matrices.
SIAM J. Optim. {\bf 6}, 164--177 (1996)

\bibitem{Lewis1996a}
Lewis, A.S.: 
Derivatives of spectral functions.
Math. Oper. Res. {\bf 21},  576--588 (1996)
		

\bibitem{MSZhao05}
Meng, F., Sun, D., Zhao, G.: 
Semismoothness of solutions to generalized equations and the {Moreau-Yosida} regularization.
Math. Program. {\bf 104}, 561--581 (2005)
		
\bibitem{MNR2014}
Mordukhovich, B.S., Nghia T.T.A.,  Rockafellar, R.T.:
Full stability in finite-dimensional optimization.
Math. Oper. Res. {\bf 40}, 226--252 (2014)
				
\bibitem{msarabi2016}
Mordukhovich, B.S., Sarabi, M.E.: 
Generalized differentiation of piecewise affine functions in second-order variational analysis.
Nonlinear Anal-Theor. {\bf 132}, 240--273 (2016)

\bibitem{msarabi2016b}
Mordukhovich, B.S., Sarabi, M.E.:
Critical multipliers in variational systems via second-order generalized differentiation.  
Math. Program. {\bf 169}, 605--648 (2018)

\bibitem{OWomersley93}
Overton, M.L., Womersley, R.S.: Optimality conditions and duality theory for minimizing sums of the largest eigenvalues of symmetric matrices. Math. Program. {\bf 62}, 321--357 (1993)

\bibitem{RechtFazelParrilo10}
Recht, B., Fazel, M.,  Parrilo, P.A.: 
Guaranteed minimum rank solutions to affine matrix equations via
nuclear norm minimization. 
SIAM Rev. {\bf 52}, 471--501 (2010)

				
\bibitem{Robinson76}
Robinson, S.M.:  
first order conditions for general nonlinear optimization. 
SIAM J. Appl. Math. {\bf 30}, 597--607 (1976)
		
\bibitem{Robinson80}
Robinson, S.M.:  
Strongly regular generalized equations. 
Math. Oper. Res. {\bf 5}, 43--62 (1980)
		

\bibitem{Robinson84}
Robinson, S.M.:
Local structure of feasible sets in nonlinear programming, Part II: Nondegeneracy, Math. Program. Stud. {\bf 22}, 217--230 (1984).
		
\bibitem{rockafellar1970}
Rockafellar, R.T.: 
Convex Analysis. 
Princeton University Press, Princeton (1970)
		
			
\bibitem{rwets1998}
Rockafellar, R.T., Wets, R.J.-B.: 
Variational Analysis. 
Springer, New York (1998)
		
		
		

\bibitem{Sturm99}	
Sturm, J.F.:  Using SeDuMi 1.02, a MATLAB toolbox for optimization over symmetric cones. 
Optim. Method. Softw. {\bf 11}, 625--653 (1999)
				
\bibitem{Sun06}
Sun, D.F.:
The strong second order sufficient condition and constraint nondegeneracy in nonlinear semidefinite programming and their implications.
Math. Oper. Res. {\bf 31}, 761--776 (2006)
		
		
		
		
\bibitem{Torki01}
Torki, M.: 
Second-order directional derivatives of all eigenvalues of a symmetric matrix. 
Nonlinear Anal-Theor. {\bf 46}, 1133--1150 (2001)

\bibitem{TohToddTutuncu99}
Toh, K.-C., Todd, M.J.,  T\"{u}t\"{u}nc\"{u}, R.H.:
SDPT3 -- a Matlab software package for semidefinite programming.
Optim. Method. Softw. {\bf 11},  545--581 (1999)
	
\bibitem{TohTrefethen98}	
Toh, K.-C., Trefethen, L. N.:
The Chebyshev polynomials of a matrix.
SIAM J. Matrix Anal. A. {\bf 20}, 400--419 (1998)	
	
\bibitem{Warga81}
Warga, J.: Fat homeomorphisms and unbounded derivate containers.
J. Math. Anal. Appl. {\bf 81}, 545--560 (1981)
		

\bibitem{Watson1993}
Watson, G.A.: 
On matrix approximation problems with {Ky Fan} $k$ norms.
Numer. Algorithms {\bf 5}, 263--272 (1993)

\bibitem{WDSToh14}
Wu, B., Ding, C., Sun, D., Toh, K.-C.: On the Moreau-Yosida regularization of the vector $k$-norm related functions. SIAM J. Optim. {\bf 24}, 766--794 (2014).

\bibitem{YangSunToh15}
Yang, L., Sun, D.F.,  Toh, K.-C.: 
{SDPNAL+}: a majorized semismooth {Newton-CG} augmented Lagrangian method for semidefinite programming with nonnegative constraints.
Mathe. Program. Comput. {\bf 7},  331--366 (2015)

\bibitem{zhao2010newton}
Zhao, X.Y., Sun, D.F.,  Toh, K.-C.: 
A {Newton-CG} augmented {Lagrangian} method for semidefinite programming. 
SIAM J. Optim. {\bf 20}, 1737--1765 (2010)
		


\end{thebibliography}
\end{document}